\theoremstyle{plain}
\newtheorem{theorem}{Theorem}[section]
\newtheorem{lem}[theorem]{Lemma}
\newtheorem{prop}[theorem]{Proposition}
\newtheorem{coroll}[theorem]{Corollary}
\theoremstyle{remark}
\newtheorem{defi}[theorem]{Definition}
\theoremstyle{definition}
\newtheorem{remark}[theorem]{Remark}
\def\myqedhere{\leavevmode\unskip\penalty9999 \hbox{}\nobreak\hfill\quad\hbox{\ensuremath{\blacksquare}}\gdef\qeddone{1}}
\newtheorem{example}[theorem]{Example}
\def\myqedhere{\leavevmode\unskip\penalty9999 \hbox{}\nobreak\hfill\quad\hbox{\ensuremath{\blacksquare}}\gdef\qeddone{1}}
\newcommand{\myt}[1]{\textit{#1}}
\newcommand{\OEISs}[1]{\href{https://oeis.org/#1}{#1}}
\newcommand{\fallfak}[2]{\ensuremath{#1^{\underline{#2}}}}
\newcommand{\Stir}[2]{\genfrac{ \{ }{ \} }{0pt}{}{#1}{#2}}
\DeclareMathOperator{\Res}{\textrm{Res}}
\newcommand{\ith}[1]{\ensuremath{ {#1}\textsuperscript{th}}}
\newcommand{\C}{\ensuremath{\mathbb{C}}}
\newcommand{\N}{\ensuremath{\mathbb{N}}}
\newcommand{\R}{\ensuremath{\mathbb{R}}}
\newcommand{\ZZ}{\ensuremath{\mathbb{Z}}}
\newcommand{\Sc}{\ensuremath{\mathcal{S}}}
\newcommand{\F}{\ensuremath{\mathcal{F}}}
\newcommand{\HH}{\ensuremath{\mathcal{H}}} 
\newcommand{\G}{\ensuremath{\mathcal{G}}}
\newcommand{\Z}{\ensuremath{\mathcal{Z}}}
\newcommand{\Ce}{\ensuremath{\mathcal{C}}}
\renewcommand{\S}{\ensuremath{{S}}}
\def\P{{\mathbb {P}}}
\def\E{{\mathbb {E}}}
\def\V{{\mathbb {V}}}
\def\mybeta{r} 
\def\myalpha{a} 
\def\myybeta{b} 
\newcommand{\ML}{\operatorname{ML}}
\newcommand{\GML}{\operatorname{ML}}
\newcommand{\BML}{\operatorname{ML}}
\DeclareMathOperator{\cov}{\operatorname{Cov}}
\newcommand{\Seq}{\text{\textsc{Seq}}}
\newcommand{\Cyc}{\text{\textsc{Cyc}}}
\DeclareMathOperator{\MPo}{\text{MPo}} 
\def\Dir{\operatorname{Dir}}
\def\tilt{\operatorname{tilt}}
\DeclareMathOperator{\sgn}{\text{sgn}}
\newcommand{\law}{\ensuremath{\stackrel{d}=}}
\newcommand{\claw}{\ensuremath{\xrightarrow{d}}}
\DeclareMathOperator*{\myarrow}{\xrightarrow{~~~~}}
\newcommand{\cmom}{\displaystyle \ensuremath{\myarrow}^{\smash{\raisebox{-3pt}{\footnotesize \textit{d\,\,}}}}_{\smash{\raisebox{2.2pt}{\textit{\footnotesize{m\,}}}}}}
\newcommand{\eq}{\vcentcolon=} 
\newcommand{\lambdaM}{\lambda_M^{\mkern-1.mu\text{\bf{--}}}} 
\newcommand{\mppar}{\xi}
\renewcommand{\O}{\ensuremath{\mathcal O}}
\begin{document}
\begin{frontmatter}
\title{Phase transitions of composition schemes:\texorpdfstring{\\}{} Mittag-Leffler and mixed Poisson distributions} 
\runtitle{Phase transitions of composition schemes}
\runauthor{Cyril Banderier, Markus Kuba, Michael Wallner}
\begin{aug}
\author[A]{\fnms{Cyril} \snm{Banderier}
\orcid{0000-0003-0755-3022}},
\author[B]{\fnms{Markus} \snm{Kuba}
\orcid{0000-0001-7188-6601}}
\and
\author[C]{\fnms{Michael} \snm{Wallner}
\orcid{0000-0001-8581-449X}}

\address[A]{CNRS \& Universit\'e Sorbonne Paris Nord, 
\url{https://lipn.fr/~banderier/}
}

\address[B]{University of Applied Sciences Technikum Wien, 
\url{https://dmg.tuwien.ac.at/kuba/}
}

\address[C]{Technische Universit\"at Wien,  
\url{https://dmg.tuwien.ac.at/mwallner/}
}
\end{aug}

\begin{abstract}
Multitudinous probabilistic and combinatorial objects are associated with generating functions satisfying a composition scheme $F(z)=G(H(z))$.
The analysis becomes challenging when this scheme is critical (i.e., $G$ and~$H$ are simultaneously singular). 
Motivated by many examples (random mappings, planar maps, directed lattice paths), we consider a natural extension of this scheme, namely $F(z,u)=G(u H(z))M(z)$.
We also consider a variant of this scheme, which allows us to analyse 
the number of $H$-components of a given size in~$F$.

We prove that these two models lead to a rich world of limit laws, where~we identify the key rôle played by a new universal law introduced in this article:
 the three-parameter Mittag-Leffler distribution, which is essentially the product of a beta and a Mittag-Leffler distribution.
We also prove (double) phase transitions, additionally 
involving Boltzmann and mixed Poisson distributions, bringing a unified explanation of the associated thresholds. 
In all cases we obtain moment convergence and local limit theorems.
We end with extensions of the critical composition scheme to a cycle scheme and to the multivariate case, 
leading to product distributions. 
Applications are presented for random walks, trees (supertrees of trees, increasingly labelled trees, preferential attachment trees),
triangular P\'olya urns, and the Chinese restaurant process.
\end{abstract}

\begin{keyword}[class=MSC]
\kwd[Primary ]{60C05}
\kwd[; secondary ]{60E07}
\kwd{60G50}
\kwd{05A15}
\end{keyword}

\begin{keyword}
\kwd{Mixed Poisson distributions}
\kwd{Mittag-Leffler distributions}
\kwd{stable laws}
\kwd{Boltzmann distributions}
\kwd{analytic combinatorics}
\kwd{generating functions}
\kwd{singularity analysis}
\kwd{critical composition schemes}
\end{keyword}
\end{frontmatter}
\dedicated{
 \begin{center}
 \noindent \emph{{\tiny \SixFlowerAltPetal\SixFlowerAltPetal}\ \small{This article is kindly devoted to Alois Panholzer, on the occasion of his 50th birthday.}\ \tiny{\SixFlowerAltPetal\SixFlowerAltPetal}}
 \end{center}
}

\thispagestyle{empty}     
\bgroup  \footnotesize
\tableofcontents    
\egroup

\section{Introduction\label{SectionIntro}}
Many combinatorial structures are an assemblage of more basic building blocks, 
and this situation is ubiquitous in many different fields, such as combinatorics, probability theory, and statistical mechanics.
It appears for example in permutations, random walks, random mappings, random forests, parking functions, P\'olya--Eggenberger urn models, 
(Bienaymé)--Galton--Watson processes, destruction procedures in simply generated trees, 
inversions in labelled tree families, generalized plane-oriented recursive trees (scale-free trees), 
set or integer partitions with some constraints, sequences of words, tilings, 
different families of graphs or maps, etc.; see, e.g.,~\cite{BFSS2001,BaFla2002,BanderierHitcenko2012,BaWa2014,BaWa2017,BaWa2018,DS95,DS97,FlaDumPuy2006,FlaSe2009,FS90,FS93,GoldschmidtHaasSenizergues2020,KuPa2014,Pan2003,Pan2006,PanholzerSeitz2011,Puy2005,Wallner2020}.
 In the language of generating functions, one then has a functional composition scheme such as 
\begin{equation*}
F(z)=G\big(H(z)\big).
\end{equation*}

Let us illustrate this composition scheme with some examples 
(each of them being in fact the starting point of many theorems in the literature): a random forest is a set of random trees,
a permutation is a set of cycles,
a bridge (a random walk on~$\mathbb Z$) is a sequence of arches,
functional mappings are sets of cycles of Cayley trees,
supertrees are trees in which each leaf is replaced by another family of trees,
an integer partition is a sequence of parts,
the factorization of a polynomial in a finite field is a multiset of irreducible factors,
and, following the work of Tutte, 
several important families of planar maps can be seen as a ``simple core'' in which each node is replaced by some ``simple map'', etc.
The reader can find many other examples 
illustrating the universality of the scheme $F(z)=G\big(H(z)\big)$
in the wonderful book by Flajolet and Sedgewick on analytic combinatorics~\cite[Sec.~VI.9]{FlaSe2009}. 
Structurally, this composition scheme is at the heart of many fascinating phase transition phenomena (analytically corresponding, e.g., to coalescing saddle points or to confluence of singularities). 
More precisely, let $G(z)=\sum_{n \geq 0} g_n z^n$ and $H(z)=\sum_{n \geq 0} h_n z^n$ be analytic functions at the origin with nonnegative coefficients and $H(0)=0$.
Let $\rho_G$ and $\rho_H$ be the radii of convergence of $G(z)$ and $H(z)$, respectively.
Then, following~\cite{BFSS2001,FlaSe2009},
we focus on \myt{critical composition schemes}.
\begin{defi}[Critical composition scheme]
	The composition scheme 
	$F(z)=G\big(H(z)\big)$ is \emph{critical} if it satisfies $H(\rho_H)=\rho_G$.
	\label{def:critical}	 
\end{defi}
In other words, $G$ and $H$ are  here concomitantly singular.	
We will assume throughout this work that we are always in the critical case (the asymptotic analysis is straightforward otherwise).
Note that this terminology is a generalization of the notion of critical/supercritical/subcritical Galton--Watson processes,
initially popularized by Harris for neutron branching processes~\cite{Harris1963}. 

 Often, $G(z)$ and $H(z)$ are the counting series of certain combinatorial families $\mathcal{G}$ and $\mathcal{H}$ such that 
$\mathcal{F}=\mathcal{G}(\mathcal{H})$.
We refer to the first part of~\cite{FlaSe2009}
for a more detailed presentation of this combinatorial approach, starting from the so-called atoms, and then assembling them into more elaborate structured blocks via combinatorial constructors.
Some important subclasses of such structures 
were also subject of more probabilistic approaches; see, e.g.,~\cite{ArratiaBarbourTavare2003,Kolchin1999,GoldschmidtHaasSenizergues2020,AddarioBerry2019}. 

Now, our goal is to analyse 
probabilistic properties of critical compositions like
\begin{equation*}
F(z,u)=G\big(u H(z)\big),
\end{equation*}
where $u$ marks each occurrence of objects of $\mathcal{H}$.
From a combinatorial perspective,\linebreak $F(z,u)$ enumerates $\mathcal{F}$-structures of size $n$ made of $k$ ``building blocks from~$\HH$''
(also simply called $\HH$-components), i.e., $\G$-structures made of~$k$ atoms, where each atom is then replaced by an $\HH$-block
(which is itself a structured set of atoms).
For any combinatorial structure in~$\mathcal{F}$, its corresponding $\G$-structure is sometimes 
called its \myt{core}\footnote{The word ``core'' comes from the theory of graphs and maps, 
 where this composition scheme is natural and was, e.g., analysed 
by Janson, Knuth, {\L}uczak, and Pittel~\cite{JansonKnuthLuczakPittel1993} for graphs
and by Banderier, Flajolet, Schaeffer, and Soria~\cite{BFSS2001} for maps.}
 (or ``skeleton'', or ``backbone'').

A first natural question is what is the typical size of this core, i.e., what is the \textit{typical number of $\HH$-components}?
Such insight helps, for example, to make many algorithms on combinatorial structures more efficient, as Knuth shows in~\cite{Knuth1973} (see also~\cite{BFSS2001}, where this insight is used to design faster random generation algorithms).
To answer this question, 
one considers the discrete random variable $X_n$ associated with this core size in a uniformly chosen object of size $n$. 
Its probability mass function is obtained by extraction of coefficients:\nolinebreak
\begin{equation*}
\P\{X_n=k\}= \frac{[z^n u^k]F(z,u)}{[z^n]F(z,1)}=g_k\frac{[z^n]H(z)^k}{f_n},
\end{equation*}
where $[z^n]$ denotes the extraction of coefficient operator $[z^n]\sum_{n} f_n z^n = f_n$. 
As $H(z)$ has typically a singular expansion of the type 
\begin{align*}
H(z) &= \tau_H + c_{H}\Big(1-\frac{z}{\rho_H}\Big)^{\lambda_H} + \dots,
\end{align*}
this implies that the asymptotic behaviour of $\P\{X_n=k\}$ depends 
on the exponent~$\lambda_H$ (which is called the \textit{singular exponent} of $H$).
Actually, this exponent even plays a key r{\^o}le, as it entails four types of asymptotic behaviour for $X_n$:
\begin{itemize}
\item For $\lambda_H>2$ the limit law is related to a Gaussian law.
\item For $1<\lambda_H<2$ the limit law is related to a stable law of parameter $\lambda_H$
(this distribution is supported on~$\R$ and possesses moments up to order $\lambda_H$; e.g., for $\lambda_H=3/2$ this gives the map-Airy distribution).
\item For $0<\lambda_H<1$ we will show that the limit law is related to 
a stable law of parameter $\lambda_H$, or more precisely to a generalized Mittag-Leffler distribution (this distribution is supported on $\R^+$ and has moments of any order; e.g., for $\lambda=1/2$, this gives the Rayleigh distribution). 
\item For $\lambda_H<0$ the scheme is not critical because the function $H(z)$ diverges at $z=\rho_H$, 
and thus leads to a singularity of $G(H(z))$ at some $z<\rho_H$. Such a scheme is called supercritical and typically 
leads to a Gaussian limit law.
\end{itemize}

The case $\lambda_H<0$ is analysed by Flajolet and Sedgewick~\cite{FlaSe2009}\footnote{We refer to \textit{The Hitchhiker's Guide to the Galaxy}, by Douglas Adams, for an interesting epistemology of~\cite{FlaSe2009}.}, building on the seminal work of Bender~\cite{Bender1973}.
The three cases $0<\lambda_H<1$, $1<\lambda_H<2$, and $\lambda_H>2$ were partially analysed by Banderier, Flajolet, Soria, and Schaeffer~\cite[Theorem 12]{BFSS2001}, but without a precise statement for the right renormalizations in the limit laws. 
It is partly due to the fact that the initial motivation of the authors of~\cite{BFSS2001} was to analyse the core of planar maps, 
so they focused on the subcase $\lambda_H=\frac32$, which corresponds to the map-Airy distribution.
Thus, a more complete analysis of the composition scheme in these three regions of~$\lambda_H$ 
remained to be done.\pagebreak

For sure, we expected that the different possible analytic behaviours of $G$ introduce further subcases, but 
 we were surprised that the detailed analyses were much more challenging than expected: As we shall see, they require several new ingredients. 
Our identification of the limit laws involves \textit{moment-tilted distributions},
\textit{product distributions}, and \textit{Boltzmann distributions} 
(see Section~\ref{ssMP} for a formal presentation of these three types of distributions and their key properties).
Therefore, the first main objective of our work is to give a complete landscape of the limit laws associated with critical composition schemes.
We analyse the case $0<\lambda_H<1$ in this article, and the cases $1<\lambda_H<2$ and $\lambda_H>2$ in our companion article~\cite{BanderierKubaWallner2021b}.

Our second main objective is to explain the phase transitions 
observed for the \textit{number of ${\mathcal H}$-components of a given size}.
This builds on the work of Panholzer and the second author~\cite{KuPa2014}, 
in which they started to unify the diversity of limit laws encountered in these phase transitions under the umbrella of \textit{mixed Poisson distributions},
and relies on the study of a size-refined composition scheme that we detail in the next section.

\section{New main results}\label{sec2}
\subsection{Composition schemes analysed in this article}

In this work we complete the analysis of critical compositions with exponent $0<\lambda_H<1$.
Motivated by models associated with directed lattice paths~\cite{BaFla2002,BaWa2014,BaWa2017,BaWa2018,Wallner2020} and triangular P\'olya urns~\cite{FlaDumPuy2006,Jan2006,Jan2010},
we consider two natural schemes (Equations \eqref{Eq4} and \eqref{eq:scheme_size-size-refined} hereafter).
They contain a multiplication by a factor~$M(z)$ which plays a non-trivial r\^ole in our identification of the corresponding limit laws as a three-parameter generalization of Mittag-Leffler distributions.
In Section~\ref{sec:examples}, we illustrate via various examples 
how these two schemes unify and refine many previous studies. 

Firstly, we consider the following \myt{extended composition scheme}
\begin{equation} 
F(z)=G\big(H(z)\big)\cdot M(z),\label{eq:scheme_extended}
\end{equation}
for some functions $F$/$G$/$H$/$M$ analytic at the origin, with nonnegative coefficients.
Such schemes are \emph{critical} if $\rho_G=H(\rho_H)$ (like in Definition~\ref{def:critical}) 
and additionally satisfy $\rho_{M} \geq \rho_H$, where $\rho_M$ is the radius of convergence of $M(z)$ (the analysis is straightforward if the extended composition scheme is not critical). 
In order to enumerate the family $\mathcal{F}$ according to the occurrences of $\mathcal{H}$-components, we consider
\begin{align}
F(z,u)=G\big(u H(z)\big)\cdot M(z),\label{Eq4}
\end{align}
which from now on we will refer to as the extended composition scheme.
Equivalently, $[z^n u^k]F(z,u)$ is the number of $\mathcal{F}$-structures of size $n$ having $k$ $\mathcal H$-components. 
This generalizes the schemes $G(uH(z))$ and $A(uz) B(z)$ analysed in~\cite{BFSS2001} and~\cite{FlaSe2009,Gourdon1998}, respectively.
The corresponding random variable $X_n$ has a probability mass function given by
\begin{equation}
\P\{X_n=k\}= \frac{[z^n u^k]F(z,u)}{[z^n]F(z,1)}=g_k \frac{[z^n]H(z)^k\cdot M(z)}{f_n}.
\label{PXnk}
\end{equation}
Our first main result is Theorem~\ref{TheExtended}, in which we give explicit expressions for the asymptotics of the factorial moments of $X_n$, 
the limit distribution of $X_n$ (suitably normalized), and its density function. 
It appears that this limit distribution differs depending on some relationship between the singular exponents of $G(z)$, $H(z)$, and $M(z)$, 
as summarized in Table~\ref{tab:extended} (where we write $X_n\sim c_n\cdot \mathcal{D}$ when $c_n^{-1}\cdot X_n\to \mathcal{D}$ in distribution for $n\to \infty$). 
In addition to these convergences in distribution, we prove moment convergence for the continuous limit laws.

	{\newcommand{\myw}{3cm}
	\newcommand{\mywb}{3.1cm}
	\begin{table}[ht!] \begin{minipage}[c]{\linewidth}
	\centering%
	\begin{tabular}{@{}lccc@{}}
		\toprule
		\emph{Singular} & 
		$ \lambda_M > \lambda_G \lambda_H$ & 
		$ \lambda_M < \lambda_G \lambda_H $ &
		$ \lambda_M = \lambda_G \lambda_H $ \\
	\emph{exponent} & 
		(pure scheme) &
		(degenerate scheme) &
		(confluent scheme) \\[.5mm]
		\midrule 
		\emph{Limit law \hspace{-1cm}} 
 & 
		continuous &
		discrete &
		linear combination \\
 & 
		(gen.\ Mittag-Leffler $\ML$) &
		(Boltzmann $\mathcal B$) &
		 (${\ML}+{\mathcal B}$) \\[2mm]		
		\emph{Example} & 
		\begin{tabular}{c}\includegraphics[width=\myw]{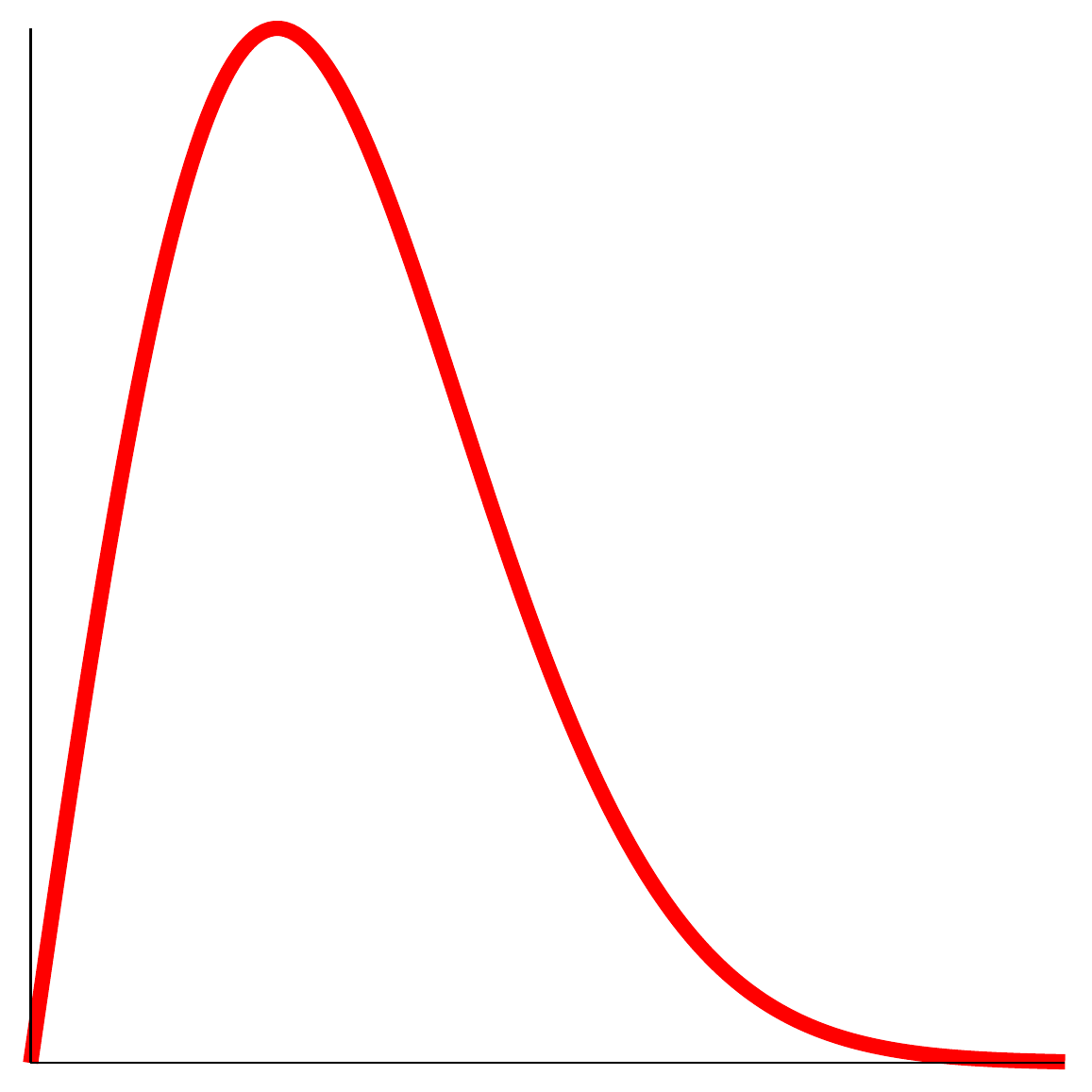}\end{tabular} & 
		\begin{tabular}{c}\includegraphics[width=\myw]{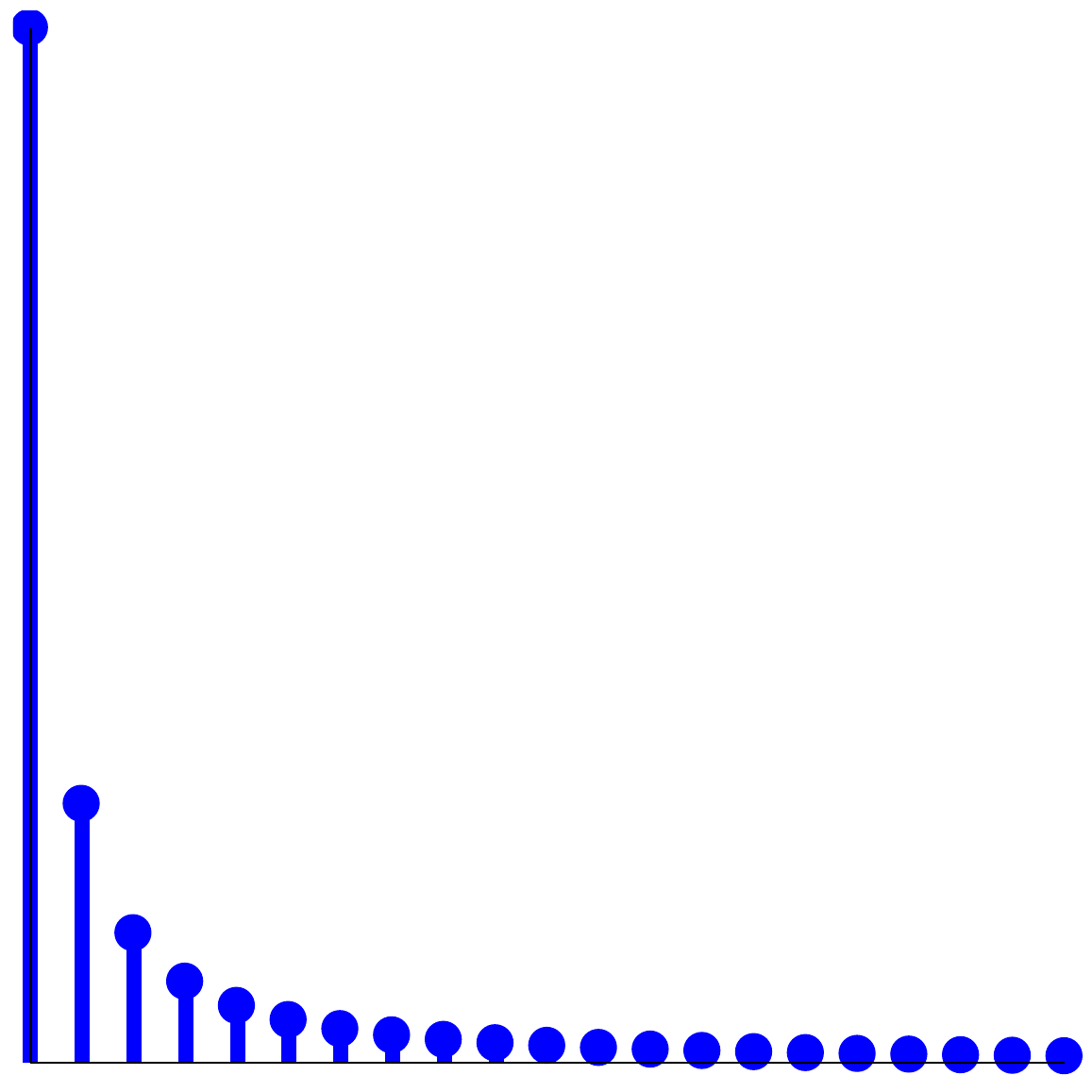}\end{tabular} &
		\begin{tabular}{c}\includegraphics[width=\mywb,trim={8mm 22mm 0 0},clip]{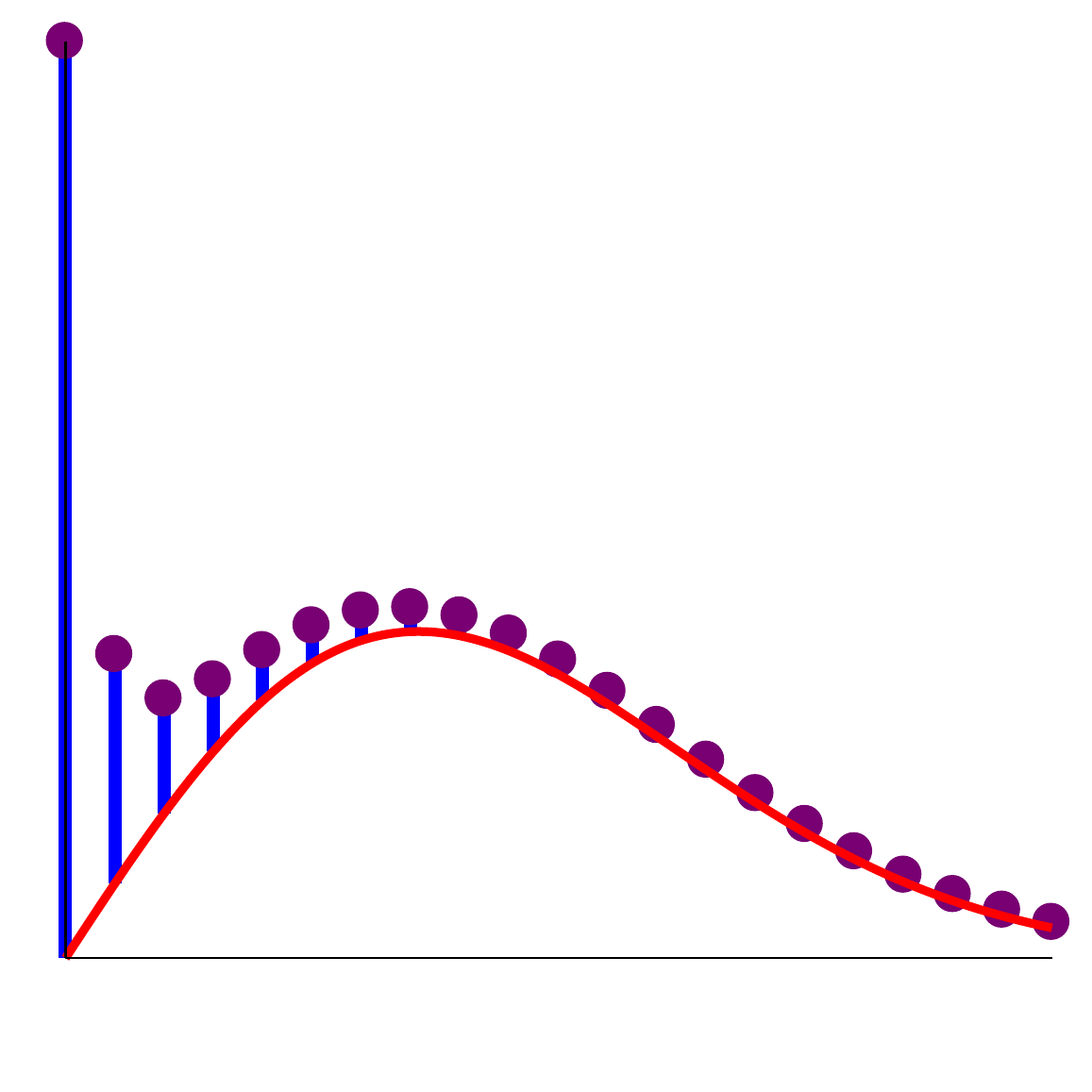}\end{tabular} \\ 
		 & 
		$X_n \sim C n^{\lambda_H} {\ML} $ 
 &
		$\P\{X_n=k\} \sim \frac{g_k \rho_G^k}{G(\rho_G)} $ &
		$X_n \sim \text{LinComb}(n^{\lambda_H} \ML, \mathcal{B} )$ \\
		\bottomrule
	\end{tabular}
	\caption{Three asymptotic behaviours according to the singular exponents $\lambda_G/\lambda_H/\lambda_M$ of $G/H/M$: the number $X_n$ of $\HH$-components
	in $\F$-structures of size $n$ in the extended scheme~\eqref{eq:scheme_extended} 
is given by three completely different types of limit laws, depending on whether the scheme is analytically pure/confluent/degenerate (Theorems \ref{TheExtended}, \ref{TheExtendedDegenerate}, and \ref{TheExtendedConfluent}).}
	\label{tab:extended}
 \end{minipage}
	\end{table}
	}
\pagebreak

Secondly, we consider a \myt{size-refined composition scheme} which allows us to capture some threshold phenomenon 
via a bivariate generating function $F(z,v)$\footnote{Note that we still use the letter $F$ to denote the main function. 
The auxiliary variable is now~$v$ and no more~$u$ as we are marking a different parameter. 
This choice avoids more cumbersome notation and is also motivated by the fact that we always have $F(z,1)=F(z)$.}:
\begin{equation}
\label{eq:scheme_size-size-refined}
F(z,v)=G\big(H(z) - (1-v)h_j z^j\big)\cdot M(z),\quad j\in\N.
\end{equation}
In this scheme, $[z^n v^k]F(z,v)$ is therefore the number of $\mathcal{F}$-structures of size $n$ 
having $k$ $\mathcal{H}$-components each of size $j$; this is combinatorially summarized by:
\begin{equation*}
\mathcal{F}=\mathcal{G}\big(\mathcal{H}_{\neq j} + v\mathcal{H}_{= j}\big)\times \mathcal{M}.
\end{equation*}
Assuming the uniform distribution amongst structures of size $n$, let $X_{n,j}$ be the number 
of $\HH$-components of size $j$ inside $\mathcal{F}$-structures\footnote{Flajolet and Sedgewick call the corresponding distribution the \textit{profile} of the combinatorial object, by analogy with the profile of integer compositions; see~\cite[p.~169, p.~451, p.~632]{FlaSe2009}.} of size $n$. 
Thus,~the\- random variables $X_{n,j}$ naturally refine the distribution 
of the core size~$X_n$ given in~\eqref{PXnk}, since
\begin{equation*}
\sum_{j\in\N} X_{n,j}=X_n.\label{eq:rvssize-size-refined}
\end{equation*}
Formula~\eqref{eq:scheme_size-size-refined} implies that one has
\begin{equation}
\P\{X_{n,j}=k\}= \frac{[z^n v^k]F(z,v)}{[z^n]F(z,1)}
=\frac{h_j^k}{k!}\frac{[z^{n-j k}]G^{(k)}\big(H(z)-h_j z^j\big)M(z)}{f_n}.
\label{Xnj_proba}
\end{equation}
Our second main result is Theorem~\ref{TheRefined}, 
in which we prove that the factorial moments of $X_{n,j}$ are asymptotically of mixed Poisson type, and establish a convergence in distribution,
with convergence of all moments, towards explicit limit distributions.

We extend these two main results to the composition schemes involving a logarithmic singularity 
in Theorem~\ref{COMPSCHEMECycleThe1} and~\ref{COMPSCHEMECycleThe2}, leading to Mittag-Leffler distributions. 

Then, our third main result is Theorem~\ref{TheMV} in which we give a multivariate generalization 
with arbitrary many variables, leading to Dirichlet product distributions. 

\subsection{Phase transitions}

Analogously to what can happen in physics or chemistry for some small change of temperature, pressure, or concentration,
a phase transition in mathematics corresponds to a sudden non-smooth change of properties under smooth variation of the parameters.
Such non-smooth changes are thus analytically reflected by a singularity of some function associated with these properties.
For combinatorial structures, generating function methods were successfully used to analyse such phase transitions,
 e.g., for random graphs or planar maps~\cite{JansonKnuthLuczakPittel1993, GimenezNoy2009, GimenezNoyRue2013,BFSS2001}, 
for satisfiability problems~\cite{Monasson2005,Puy2005}, 
and for many other problems~\cite{Fuchs2008,Fuchs2012,Hwang1999,Hwang2004,HwangZacharovas2011,FlaSe2009}. 

Usually, the main problems are to locate the phase transition, 
to properly describe the phase transition via special functions in terms of the involved parameters, 
and to give intuitive explanations of the observed phenomena. 
Some cases even exhibit two successive phase transitions. In probability theory, 
such a double phase transition occurs for example with the binomial distribution $\P\{X_n=k\} =\binom{n}{k} p^k (1-p)^{n-k}$ when $p$ depends on $n$, with $p$ bounded away from one. It indeed leads to the following trichotomy involving a continuous to discrete to degenerate phase transition: 
Firstly, when both $\E(X_n)=p(n) \cdot n$ and the variance $\V(X_n)$ tend to infinity, then a central limit theorem for $X_n$ follows. Secondly, if $p(n) \cdot n\to \lambda>0$, then $X_n$ is asymptotically Poisson distributed. Thirdly, if $p(n) \cdot n\to 0$, then $X_n$ degenerates to a Dirac distribution with all mass at $0$. 

The analysis of the size-refined composition scheme unravels, though more subtly, such a double phase transition:
the random variable $X_{n,j}$ given by Equation~\eqref{Xnj_proba} has three phases, each leading to its own limit law, visualized in Table~\ref{tab:size-refinedphases}.

	{\newcommand{\myw}{3.5cm}
	\begin{table}[hbt!]
	\begin{tabular}{@{}lccc@{}}
		\toprule
		\emph{Scale} & 
		$j \ll n^{\frac{\lambda_H}{1+\lambda_H}}$ &
		$j = \Theta\Big(n^{\frac{\lambda_H}{1+\lambda_H}}\Big)$ &
		$j \gg n^{\frac{\lambda_H}{1+\lambda_H}}$ \\
		\midrule 
		\emph{Limit law} 
	& 	continuous &
		discrete &
		discrete \\
 & (gen.\ Mittag-Leffler $\ML$) & (mixed Poisson $\MPo(\mppar \ML)$) & (Dirac) \\[2mm]
\emph{Example} & 
		\begin{tabular}{c}\includegraphics[width=\myw]{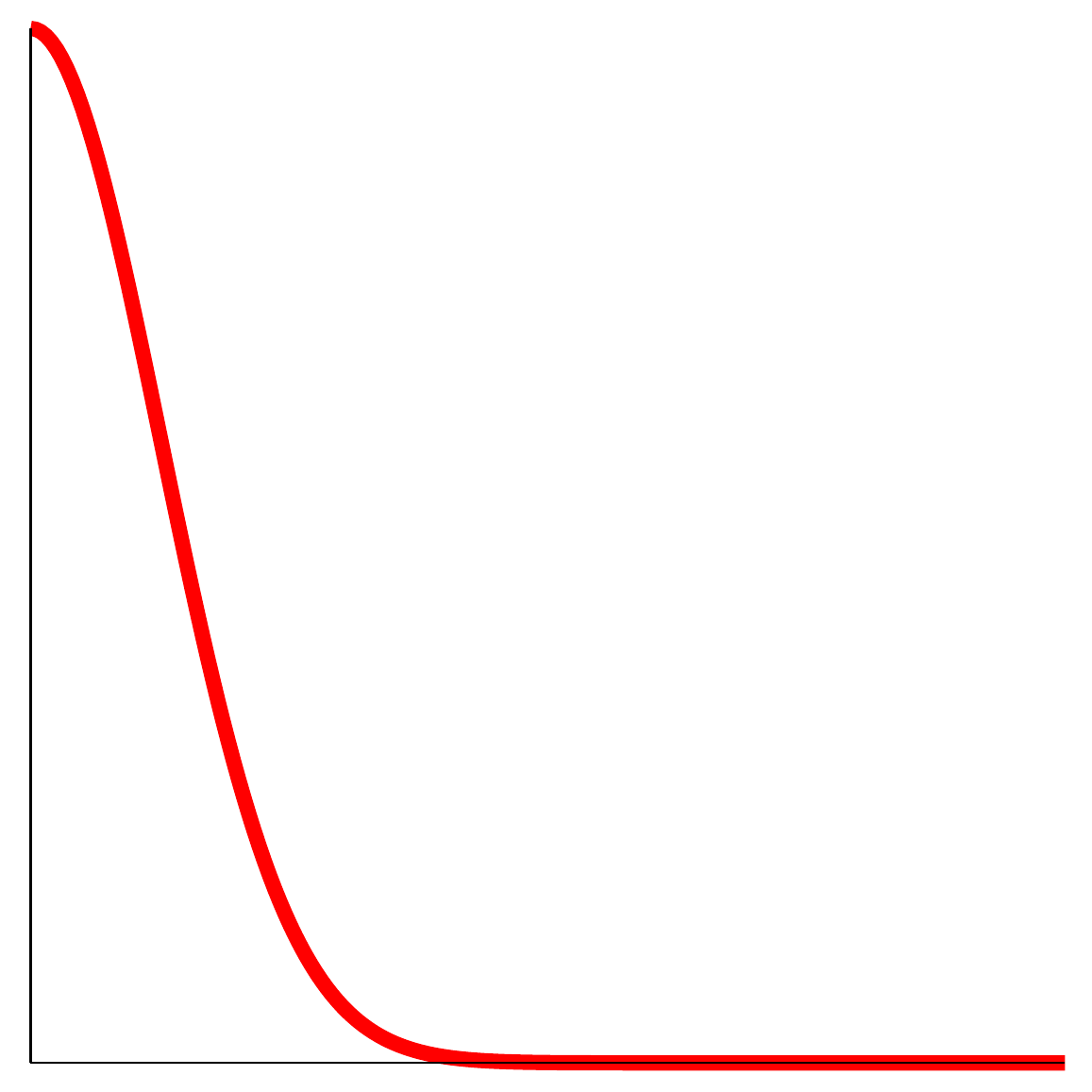}\end{tabular} & 
		\begin{tabular}{c}\includegraphics[width=\myw]{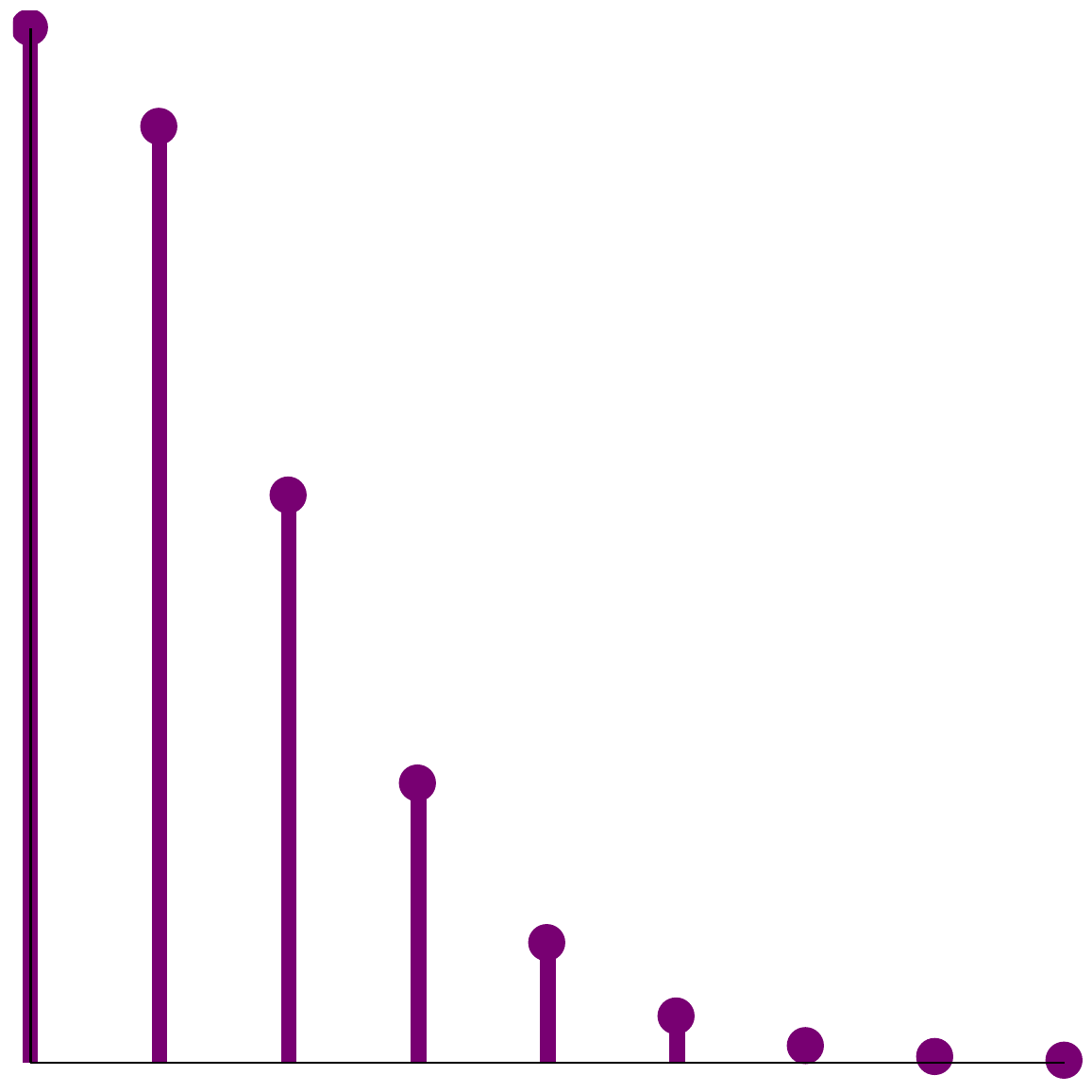}\end{tabular} & 
		\begin{tabular}{c}\includegraphics[width=\myw]{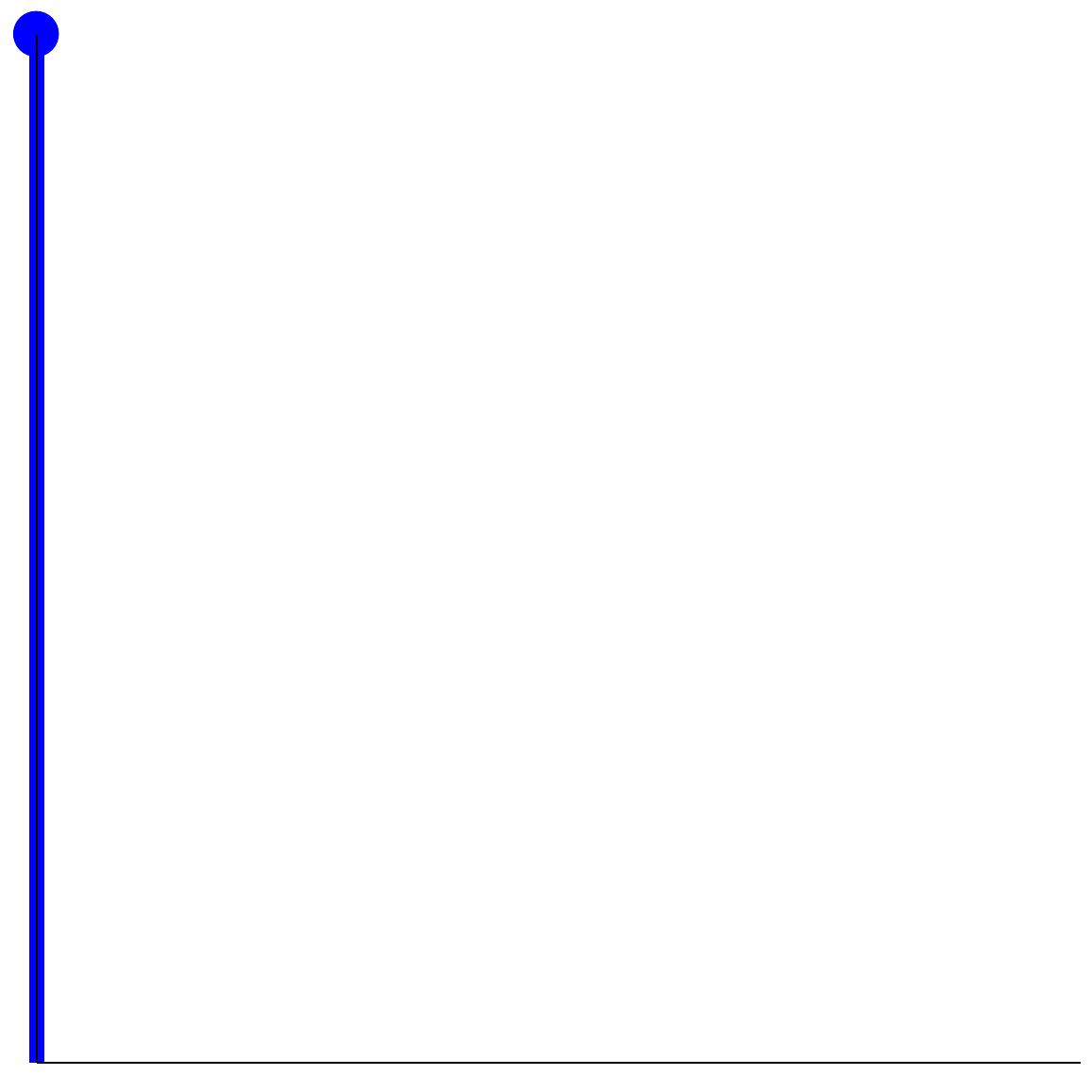}\end{tabular} \\
& $X_{n,j} \sim C \, h_j \rho_H^j n^{\lambda_H} \ML$ 
& 
$X_{n,j} \sim \MPo(\mppar \ML)$ 
& $\P\{X_{n,j} \geq 1\} \sim 0$ \\ 
		\bottomrule
	\end{tabular}
	\caption{Three successive régimes for the number $X_{n,j}$ of $\HH$-components of size $j$
	in $\F$-structures\\ of size $n$ in the critical size-refined scheme~\eqref{eq:scheme_size-size-refined}, depending on the relation between $j$ and~$n$; \\
	this double phase transition is proven in Theorem~\ref{TheRefined}.}
	\label{tab:size-refinedphases}
	\end{table}
	}

Table~\ref{tab:size-refinedphases} also motivates the following important remark.

\begin{remark}[Ubiquity of the exponent 1/3]
Generating functions often have a dominant singularity of the square-root type (i.e., $\lambda_H=1/2$).
This phenomenon is explained by the Drmota--Lalley--Woods theorem: Whenever $H(z)$ can be defined by a strongly connected set of polynomial equations with nonnegative coefficients,
it has a singular exponent $1/2$ (see, e.g., \cite{FlaSe2009,Drmota2009,BanderierDrmota2015}). 
Accordingly, in conjunction with Table~\ref{tab:size-refinedphases}, this explains why one observes a threshold at $j=n^{1/3}$ 
in many phase transitions; see Section~\ref{sec:examples}.
\end{remark}
\pagebreak

One pleasant consequence of our work is that 
it gives a unified explanation of phase transitions from continuous to discrete observed in many examples:
descendants in increasing trees~\cite{Desc-KubPan2005},
node degrees in increasing trees~\cite{KuPa2007}, 
block sizes in $k$-Stirling permutations~\cite{PanKuCPC}, 
stopping times in urn models~\cite{PanKuAdvances}, 
death processes~\cite{PanKu2012}, 
inversions in labelled tree families~\cite{PanholzerSeitz2011},
ancestors and descendants in evolving $k$-tree models~\cite{PanholzerSeitz2012}.

These case by case studies lacked a proper comprehensive and uniform description of the arising phase transitions.
So, instead of treating these combinatorial structures individually,
we directly study the size-refined composition scheme~\eqref{eq:scheme_size-size-refined}. 
As summarized in Table~\ref{tab:size-refinedphases}, we show 
how the phase transitions for the random variable~$X_{n,j}$ depend on the growth of $j=j(n)$ with respect to the size~$n$.
We prove that the distribution of $X_{n,j(n)}$ is\linebreak \textit{continuous} for small values of $j$ (a three-parameter generalization of the Mittag-Leffler distribution),
or \textit{discrete} for some threshold values of $j$ (a Poisson distribution mixed with the previous Mittag-Leffler distribution), 
or a \textit{Dirac distribution} for large values of $j$.
We further exemplify these results on different processes, like the Chinese restaurant process, 
sign changes and returns to zero in random walks, and the branching structure of random trees. 

\subsection{Plan of the paper}

In Section~\ref{SectionPrelim} we collect results from analytic combinatorics. 
We also present our basic assumptions on the generalized composition scheme and collect properties of various distributions that appear later in our main results.
Section~\ref{SecExtended} is devoted to our results on the 
random variable $X_n$ corresponding to the extended composition scheme~\eqref{Eq4} involving $F(z,u)$.
Section~\ref{SecRefined} contains the results for the random variable $X_{n,j}$ 
corresponding to the size-refined composition scheme~\eqref{eq:scheme_size-size-refined} involving $F(z,v)$ and exhibiting phase transitions. 
We also give the covariance and the correlation coefficient of $X_{n,j_1}$, $X_{n,j_2}$, observing again some phase transitions.
In Section~\ref{sec:examples} we discuss various examples to which we apply our results. 
Finally, in Section~\ref{sec:outlook}, we analyse further extensions for a cycle scheme and for a multivariate critical composition scheme. 
We also present new examples for these two extensions.

\section{Singularity analysis, stable laws, and mixed Poisson distributions\label{SectionPrelim}}

In this section, we first present a few important notions from analytic combinatorics~\cite{FlaSe2009}
which we use to identify the radius of convergence and the singular exponents in our composition schemes. 
Then, we present a few results on the family of moment-tilted stable laws, 
based on James~\cite{James2010,James2013,James2015} and Janson~\cite{Jan2010}. 
We also collect properties of mixed Poisson distributions and their factorial moments. 
All of this allows us to identify in Sections~\ref{SecExtended} and~\ref{SecRefined}
the distribution of the $\HH$-components in our composition schemes.

\begin{figure}[h]			
\begin{center}
\includegraphics[width=.62\textwidth]{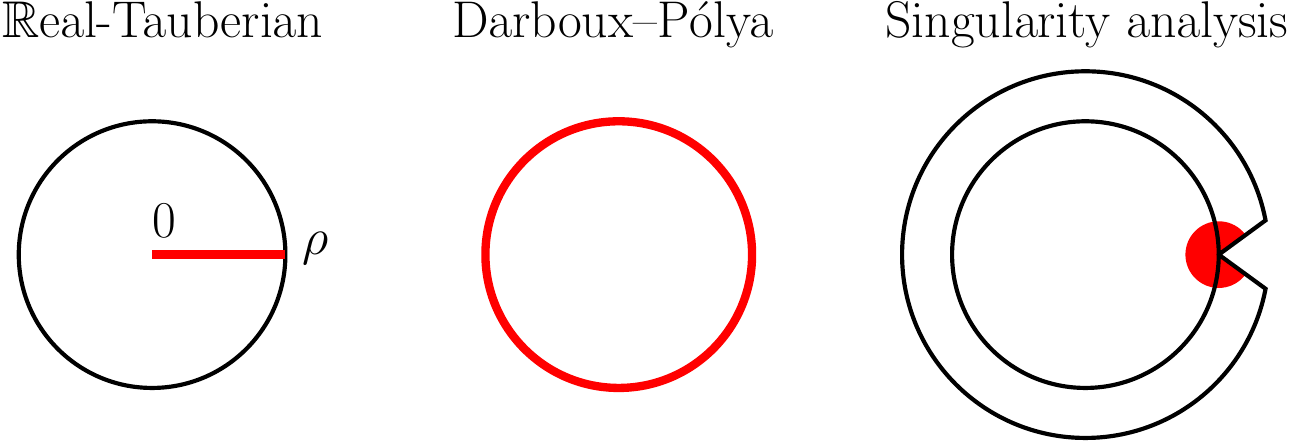} 
\end{center}
\caption{As visually summarized by Flajolet in~\cite{Flajolet2006},
three fundamental methods of asymptotic analysis require information on the function in different parts (shown here in red) of the complex plane.
Flajolet and Odlyzko's singularity analysis~\cite{FlaOd1990} offers more powerful results, but requires analyticity 
in a $\Delta$-domain (tastefully also sometimes called ``Camembert domain'' or ``Pac-Man'' by Flajolet himself!). 
This is the domain inside the blue curve, defined by $\Delta= \{z\in \C \text{ such that } |z|<\rho+\varepsilon \text{ and } \operatorname{arg}(z-\rho)>\theta\}$, 
for some $\varepsilon>0$ and $0<\theta<\pi/2$. 
This analyticity is agreeably typically granted for most combinatorial constructions 
(e.g., for the ones leading to meromorphic, algebraic-logarithmic, hypergeometric, or D-finite functions).}
\label{fig:deltadomain}
\end{figure}

\subsection{Singularity analysis and asymptotic expansions}
\label{sec:prelimsingular}
Let 
\begin{equation*}
F(z) = \sum_{n \geq 0} f_n z^n
\end{equation*}
 be a
function with nonnegative coefficients $f_n$ that is
analytic in a $\Delta$-domain (see Figure~\ref{fig:deltadomain} for this notion)
with a finite radius of convergence~$\rho$ and singular expansion
\begin{equation}
F(z)= P\left(1-\frac{z}{\rho}\right) + c_F \cdot \left(1-\frac{z}{\rho}\right)^{\lambda_F} \left(1+o(1)\right), 
\label{EqSA}
\end{equation}
where $\lambda_{F}\in \R\backslash \{0,1,2,\dots\}$ is called the \emph{singular exponent} (of $F(z)$ at $z=\rho$),
and where $P(x) \in \C[x]$ is a polynomial (of degree $\geq 1$ for $\lambda_F>1$, 
of degree $0$ for $0<\lambda_F<1$, 
and $P=0$ for $\lambda_F<0$).
Then, by standard singularity analysis using the transfer theorem~\cite[Sec.~2]{FlaOd1990}, 
if $\rho$ is the unique singularity of $F(z)$ in $|z|\leq\rho$, 
the Taylor series coefficients of $F(z)$ satisfy 
\begin{equation}
[z^n]F(z) = f_n =\frac{c_F}{\rho^n}\cdot \frac{n^{-\lambda_{F}-1}}{\Gamma(-\lambda_{F})}\cdot \left(1+o(1)\right).
\label{EqSA1}
\end{equation}

As the $f_n$'s are nonnegative, this implies the sign property 
\begin{equation}\label{sign_property}
\sgn(c_F)=\sgn(\Gamma(-\lambda_{F})),
\end{equation}
i.e., due to the sign change of the gamma function at each negative integer we have
$c_F>0$ for $\lambda_F<0$,  $c_F<0$ for $0<\lambda_F<1$, and 
$\sgn(c_F)= (-1)^{\lceil \lambda_F \rceil}$ for $\lambda_F>1$.

\smallskip

Note that it is in general easy to get more asymptotic terms in~\eqref{EqSA},
and that singularity analysis directly translates them into more asymptotic terms in~\eqref{EqSA1}. 
Moreover, if one has several dominant singularities,
one has to sum the contributions of the local expansions at each singularity to get the asymptotics of the coefficients (for more details see~\cite[Chapter~IV.6]{FlaSe2009}; see also the rotation law in~\cite{BaWa2019}
for walks or trees with periodic offspring%
). 

Note that algebraic functions constitute one of the main 
sources of functions satisfying the conditions of the expansion~\eqref{EqSA}; indeed, they admit a Puiseux expansion 
\[ \textstyle
	F(z) = \sum_{k \geq k_0} c_k \cdot \left(1-z/\rho\right)^{k/r},
\]
for $k_0, r \in \ZZ$ with $r\geq1$. 
For example, in Section~\ref{sec:Supertrees}~we~will encounter the Catalan generating function $1/2 - \sqrt{1-4z}/2$, for which one has $\rho=1/4$, $P(x)=1/2$, 
$k_0=0$, and $r=2$.

This Catalan example is pleasantly simple, and thus obviously not generic, as its Puiseux expansion contains only two terms.
In full generality such an expansion can involve an infinite number of terms whose sum is converging. 
Let us now analyse these singular expansions.

\begin{lem}[Singular expansion]
	\label{lem:Fgenericasy}
	Let $F(z)$ be a power series with nonnegative coefficients satisfying~\eqref{EqSA}. 
	Then $F(z)$ has the following singular expansion
	\begin{align*}
	\begin{aligned}
		F(z) &= 
			\begin{cases}
				c_F \left( 1-\frac{z}{\rho} \right)^{\lambda_F}(1+o(1)) & \text{ if\ } \lambda_F < 0, \\
				\tau_F + c_F \left( 1-\frac{z}{\rho} \right)^{\lambda_F}(1+o(1)) & \text{ if\ } 0 < \lambda_F < 1, \\
				\tau_F + \sum_{i=1}^{\lfloor \lambda_F \rfloor} p_i \left( 1-\frac{z}{\rho} \right)^i + c_F \left( 1-\frac{z}{\rho} \right)^{\lambda_F}(1+o(1)) & \text{ if\ } \lambda_F>1,
			\end{cases}
	\end{aligned}
	\end{align*}
	where $\tau_F=F(\rho)>0$ for $\lambda_F>0$ and $p_1 = - \rho F'(\rho) < 0$ for $\lambda_F>1$.
\end{lem}
\pagebreak

\begin{proof}
	Firstly, if $\lambda_F<0$, then the lowest order of the Puiseux expansion is $\lambda_F$.
	Secondly, for $\lambda_F > 0$, we rewrite~\eqref{EqSA} into
	\begin{align*}
		F(z) &= \sum_{i=0}^k p_i (1-z/\rho)^i + c_{F} (1-z/\rho)^{\lambda_F} + \dots.
	\end{align*}
	Then, we have $p_0=P(0)=F(\rho)$ which we define to be $\tau_F$. 
	We get $\tau_F>0$ as it is an infinite convergent sum of nonnegative not-all-zero terms.
	Next, observe that the nonnegative coefficients of $F(z)$ imply that $F'(\rho)>0$.
	Thus, taking the derivative in the expansion of $F(z)$ we get $\lim_{z \to \rho} F'(z) = +\infty$ for $0 < \lambda_F < 1$ and 
	$p_1 = - \rho F'(\rho) \neq 0$ for $\lambda_F > 1$.
\end{proof}

We now consider the critical scheme $F(z)=G(H(z)) M(z)$, assuming that 
$G(z)$, $H(z)$, and $M(z)$
have a finite radius of convergence and a unique dominant singularity (i.e., the one of smallest modulus).
By Pringsheim's theorem~\cite[p.~240]{FlaSe2009} applied to each of these functions, the non\-negativity of its coefficients implies 
that this dominant singularity lies on the positive real axis and corresponds therefore to its radius of convergence denoted by $\rho_G$, $\rho_H$, and $\rho_M$. 

Note that if $M(z)$ has an infinite radius of convergence (denoted by $\rho_M=+\infty$) or if $\rho_M \neq \rho_H$,
then the asymptotics are easily obtained via
\begin{subnumcases}{[z^n] F(z) \sim}
 G(H(\rho_M)) [z^n] M(z) &\text{if $\rho_M<\rho_H$,} \label{eq1}\\ 
 M(\rho_H) [z^n] G(H(z)) &\text{if $\rho_M>\rho_H$.} \label{eq2}
\end{subnumcases}

Now, as in Lemma~\ref{lem:Fgenericasy}, we define for each function:
\begin{itemize} 
\item the singular exponents $\lambda_G$, $\lambda_H$, and $\lambda_M$,
\item the constant terms $\tau_G$, $\tau_H$, and $\tau_M$,
\item and the singular coefficients $c_G$, $c_H$, and~$c_M$.
\end{itemize}

Thus, thanks to Equations~\eqref{eq1} and \eqref{eq2}, 
we can now focus (without loss of generality, or rather ``without loss of difficulty!'') 
on the case $\rho_M=\rho_H$ which is more involved as here $G(z)$, $H(z)$, and $M(z)$ are \textit{all contributing} to the asymptotics in a nontrivial way.
Then, one gets different r\'egimes (depending on $\lambda_H$) for the asymptotics of the coefficient of $F(z)$.
In this article we focus on the range $0 < \lambda_H < 1$, while we treat the other range $\lambda_H>1$ in a companion article~\cite{BanderierKubaWallner2021b}.

Note that, as our work extends to some interesting combinatorial cases where $M(z)$ has a radius of convergence $\rho_M>\rho_H$, we will also encompass this case, for which it is then convenient to set \mbox{$\lambda_M=+\infty$} (the singular exponent of $M(z)$ at $z=\rho_H$ is infinite whenever $M$ is analytical there). 
The case $\lambda_M=+\infty$ is thus archetypal of cases 
where $M(z)$ only affects the asymptotics of~$f_n$ by a multiplicative constant like in Equation~\eqref{eq2}.

We can now express the singular exponent of $F$ in terms of those of $G/H/M$.

\begin{lem}
	Let $F(z)=G(H(z))M(z)$ be a critical composition scheme that is singular at~$\rho_H$. 
	Then, the singular exponent $\lambda_H$ of $H(z)$ satisfies $\lambda_H>0$. 
	
\noindent	Moreover, for the range $0 < \lambda_H < 1$, the singular exponent $\lambda_F$ of $F(z)$ satisfies
	\begin{align*}
			\lambda_F &= \min(
				\lambda_G \lambda_H, 
				\lambda_H,
				\lambda_M,
				\lambda_G \lambda_H + \lambda_M
				).
	\end{align*}
	For $\lambda_H > 1$, the singular exponent $\lambda_F$ of $F(z)$ satisfies
	\begin{align*}\lambda_F &= \min(
				\lambda_G, 
				\lambda_H,
				\lambda_M,
				\lambda_G + \lambda_M
				).
	\end{align*}
\end{lem}
\pagebreak
\begin{proof}
The claim $\lambda_H>0$ follows from $H(\rho_H)=\rho_G \in (0,\infty)$ as one would have $H(\rho_H)=+\infty$ if $H(z)$ had 
a singularity of negative singular exponent at $z=\rho_H$.
 
Now, we plug the singular expansions from Lemma~\ref{lem:Fgenericasy} at $z=\rho_H$ for $G(z)$, $H(z)$, and $M(z)$
into $F(z)=G(H(z))M(z)$.
When $0<\lambda_H<1$ we get the following expansions 
(in which we omit the terms not contributing to the first-order asymptotics):
\newcommand{\shrink}[1]{\!#1\!}
 {\small\begin{subnumcases}{\ \ F(z) \shrink{=}}
		c_M c_G \left(\frac{-c_H}{\rho_G}\right)^{\lambda_G}\!\!\left(1-\frac{z}{\rho_H}\right)^{\lambda_{G}\lambda_H + \lambda_M} \!+ \dots\! &
	 if $\lambda_G \shrink{<} 0$, $\lambda_M \shrink{<} 0$, 
		 \label{ExpansionF-a} \\
		 \tau_M c_G \left(\frac{-c_H}{\rho_G}\right)^{\lambda_G}\!\!\left(1-\frac{z}{\rho_H}\right)^{\lambda_{G}\lambda_H}
		\!+ \dots &
		if $\lambda_G \shrink{<} 0$, $\lambda_M \shrink{>} 0$, 
		 \label{ExpansionF-b} \\
		\tau_M c_G \left(\frac{-c_H}{\rho_G}\right)^{\lambda_G}\!\!\left(1-\frac{z}{\rho_H}\right)^{\lambda_{G}\lambda_H}
\!\!+\! c_M \tau_G \left(1-\frac{z}{\rho_H}\right)^{\lambda_{M}} \!\!+\! \dots\! &
		if $0\shrink{<}\lambda_G \shrink{<} 1$, 
		 \label{ExpansionF-c} \\
		 G'(\rho_G) \tau_M c_H \!\!\left(1-\frac{z}{\rho_H}\right)^{\lambda_H}
+c_M \tau_G \left(1-\frac{z}{\rho_H}\right)^{\lambda_{M}} \!+ \dots\! &
		if $\lambda_G \shrink{>} 1$, 
		 \label{ExpansionF-d} 
\end{subnumcases}}
\!where, if $\lambda_M=+\infty$, $\tau_M=c_M=M(\rho_H)$ and $\left(1-\frac{z}{\rho_H}\right)^{\lambda_M}=0$. 

\noindent The case $\lambda_H>1$ is obtained analogously.
\end{proof}
This lemma motivates the following definition.
\begin{defi}[Pure/confluent/degenerate composition schemes]
	\label{def:pure}
	Consider an extended or size-refined composition scheme~\eqref{eq:scheme_extended} or~\eqref{eq:scheme_size-size-refined} 
	with a unique dominant singularity $\rho_F=\rho_H$, and with $0<\lambda_H<1$.
	It is either analytically
\begin{center}
\renewcommand{\arraystretch}{1.3}
\begin{tabular}{cll}
	$\bullet$ & 
	\emph{pure} &
	if 
 $\begin{cases}
	 \lambda_G<0 \text{ or } \\ 
	 0 < \lambda_G < 1 \text{ and } \lambda_M > \min(\lambda_G \lambda_H,\lambda_H);
	 \end{cases}$\\
 $\bullet$ &
 \emph{confluent} & 
 if \, $0 < \lambda_G < 1$ and $\lambda_M = \min(\lambda_G \lambda_H,\lambda_H)$;\\
	$\bullet$ &
	\emph{degenerate} & 
	if 
		 $\begin{cases}
		 \lambda_G>1 \text{ or } \\ 
		 0 < \lambda_G < 1 \text{ and } \lambda_M < \min(\lambda_G \lambda_H,\lambda_H).
		 \end{cases}$	
\end{tabular}
\end{center}
\end{defi}
This is pictorially summarized by Figure~\ref{fig:pure-confluent-degenerate}.

\begin{figure}[!h]
\begin{center}
\includegraphics[width=.8\textwidth]{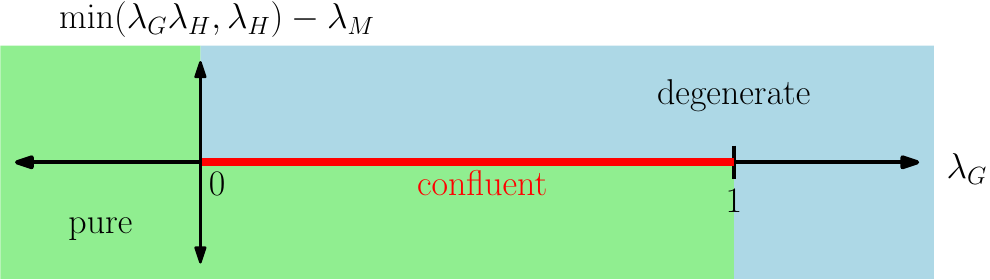} 
\end{center}
\caption{The three different r\'egimes (pure, confluent, degenerate) for extended or size-refined composition schemes: The Puiseux expansions of $G/H/M$ go into resonance (or not), thus leading to these three cases.}
\label{fig:pure-confluent-degenerate}
\end{figure}

We characterize the distributions associated with critical composition schemes
in the analytically pure/confluent/degenerate cases in Theorems~\ref{TheExtended},~\ref{TheExtendedDegenerate}, and~\ref{TheExtendedConfluent}. 
We now present some probabilistic results on the distributions which will appear in these theorems.

\pagebreak
\subsection{Probability distribution melting pot}\label{ssMP}
First, we discuss properties of tilted distributions, their link with positive stable distributions,
and we introduce the three-parameter Mittag-Leffler\linebreak distribution.
Then, we collect properties of a family of discrete distributions called mixed Poisson distributions,
and we end this section with a brief introduction to Boltzmann distributions.

For a random variable $X$ of density $f(x)$,
the tilt of $f(x)$ by a nonnegative integrable function $g(x)$ is the density $\frac{g(x)}{\E(g(X))} \cdot f(x)$.
An important class of tilted densities are the \textit{polynomially tilted densities}, where one tilts by a polynomial $g(x)=x^c$
(with $c$ being any real value such that $\E(X^c)$ is well defined).
We then use the notation
\begin{equation*}
\tilt_c(f(x))=\frac{x^c}{\E(X^c)} \cdot f(x). 
\end{equation*}

\indent Such tilted densities occur in many places: in the degree distribution in preferential attachment trees~\cite{James2013,James2015}, in Lamberti-type laws~\cite{James2010}, 
in triangular urn schemes~\cite{Jan2006,Jan2010}, in node-degrees in plane-oriented recursive trees~\cite{KuPa2007},
and in table sizes in the Chinese restaurant process~\cite{Aldous1983,Pitman1995,Pitman2006,KuPa2014}.
Note that many classes of distributions like the beta distribution, generalized gamma distribution~\cite{BanderierMarchalWallner2020}, the $F$-distribution, 
the beta-prime distribution, and distributions with gamma-type moments~\cite{Jan2010} 
are closed under the tilting operation. 

The following lemma shows that 
the operator $\tilt_c$ admits in fact several equivalent definitions
using the density, the moments, or the Laplace transform (see also~\cite[Remark~2.11]{Jan2010}).

\begin{lem}[Polynomially tilted density functions and moment shifts]
\label{LemMain}
Consider a random variable $X$ with moment sequence $(\mu_s)_{s\ge 0}$ and density $f(x)$ with support $[0,\infty)$. 
Now consider a random variable $X_c$ with $c\in\N$, having a distribution uniquely determined 
by its moments\footnote{See e.g.~Feller~\cite[Chapter VII.3]{Feller1949} for conditions implying that the moments define uniquely the distribution.}. Then the following properties are equivalent:
\begin{enumerate}
	\item\label{ita} Tilted density: $X_c$ is a random variable with density $f_c(x) = \frac{x^c}{\mu_c}\cdot f(x)$.
	\item\label{itb} Shifted moments: $X_c$ is a random variable with moments $\E(X_c^s)=\frac{\mu_{s+c}}{\mu_c}$. 
	\item\label{itc} Differentiated moment generating function: $X_c$ is such that
	$\E(e^{tX_c})=\frac{1}{\mu_c}\frac{d^c}{dt^c}\E(e^{tX})$.
\end{enumerate}
\end{lem}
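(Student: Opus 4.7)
The plan is to establish the three characterizations in a small cycle using direct density/moment manipulations and to invoke the moment-determinacy hypothesis on $X_c$ to close the loop between the three equivalent formulations.

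First I would show $(1) \Rightarrow (2)$ by a one-line computation: if $X_c$ has density $f_c(x) = x^c f(x)/\mu_c$, then
\[
\E[X_c^s] = \int_0^\infty x^s \cdot \frac{x^c}{\mu_c} f(x) \, dx = \frac{\mu_{s+c}}{\mu_c}.
\]
For the converse $(2) \Rightarrow (1)$, I would define $\tilde f(x) := x^c f(x)/\mu_c$ on $[0,\infty)$. This function is nonnegative, since $f$ is supported on $[0,\infty)$ and $x^c \geq 0$ there, and it integrates to $\mu_c/\mu_c = 1$, hence it is a legitimate probability density. The preceding computation shows that any random variable with density $\tilde f$ has the same moment sequence $(\mu_{s+c}/\mu_c)_{s \geq 0}$ as $X_c$, and the moment-determinacy assumption on $X_c$ then forces $X_c$ to have density $\tilde f$.

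For the equivalence $(1) \Leftrightarrow (3)$, I would differentiate under the integral sign, which is justified on the interior of the domain of convergence of the MGF of $X$:
\[
\frac{d^c}{dt^c} \E(e^{tX}) = \int_0^\infty x^c e^{tx} f(x) \, dx.
\]
Dividing by $\mu_c$ yields exactly the MGF of the tilted density $x^c f(x)/\mu_c$, which gives $(1) \Rightarrow (3)$; the reverse implication follows from the uniqueness of the Laplace transform in a neighbourhood of the origin.

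The main subtlety is interpretative rather than computational. One has to ensure that the tilted density is well-defined, which is why the definition of $\tilt_c$ implicitly requires $\mu_c$ to be finite and positive; and one has to specify the meaning of $d^c/dt^c$ when $c$ is not a positive integer. The natural reading is via fractional calculus or, equivalently, through the Mellin transform of $f$; if one restricts characterization $(3)$ to positive integer $c$, then the equivalence $(1) \Leftrightarrow (2)$ already delivers the lemma for all admissible real $c$. No genuine analytic difficulty arises, provided one assumes $\mu_c < \infty$ and the moment problem for $X_c$ is determinate, both of which are standing hypotheses here.
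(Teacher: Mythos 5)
Your proposal is correct and follows essentially the same route as the paper: verify (1)$\Rightarrow$(2) by the direct moment computation, close (2)$\Rightarrow$(1) via moment-determinacy of $X_c$, and link to (3) by comparing the differentiated MGF of $X$ with the MGF of the tilted density. The only cosmetic difference is that the paper establishes (2)$\Leftrightarrow$(3) by matching the two power-series expansions $\sum_{s\ge 0}\mu_{s+c}t^s/s!$ term by term, while you differentiate under the integral sign and appeal to uniqueness of the Laplace transform; both arguments amount to the same calculation and both silently restrict to integer $c$ for part (3), a point you flag explicitly and the paper flags in the remark following the lemma.
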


\begin{remark}[Tilt with $c\in \R$]
For the properties~\eqref{ita} and \eqref{itb} of~Lemma~\ref{LemMain}, 
it is possible to extend their equivalence to $c\in \R$, assuming that the corresponding moments exist. 
More generally, the equivalence between properties~\eqref{ita} and \eqref{itb} stays valid for any random variable with density $f(x)$ 
such that only moments $\mu_1,\dots,\mu_n$ exist up to a certain value $n\ge 1$. 
\end{remark}

\begin{remark}[The tilt operator for densities/moments/random variables]
This lemma justifies a slight abuse of notation: 
Starting with the densities of $X$ and $X_c$ linked by $\tilt_c(f(x))=f_c(x)$,
the operator $\tilt_c$ is also used to denote
the corresponding tilted random variable $\tilt_c(X) \eq X_c$
and the corresponding tilted moments $\tilt_c ( \mu_s) \eq \frac{\mu_{s+c}}{\mu_c}$.
\end{remark}

\begin{proof}[Proof of Lemma~\ref{LemMain}]
For \eqref{ita} $\Rightarrow$ \eqref{itb},
first observe that $f_c(x)$ is indeed a density: One has $f_c(x)\geq 0$ on $[0,\infty)$ and $\int_0^{\infty}f_c(x) \, dx=\frac{\mu_c}{\mu_c}=1$. 
 Then, one checks that
\begin{equation*}
\E(X_c^s) 
= \int_0^{\infty}x^s f_c(x) \, dx 
=\int_0^{\infty}x^{s+c} \frac{f(x)}{\mu_c} \, dx
=\frac{\mu_{s+c}}{\mu_c}.
\end{equation*}
The fact that $X_c$ is uniquely determined by its moments then implies \eqref{itb} $\Rightarrow$ \eqref{ita}.

For \eqref{itb} $\Leftrightarrow$ \eqref{itc}, observe that
$\frac{d^c}{dt^c}\E(e^{tX})=\frac{d^c}{dt^c}\sum_{s\ge 0}\frac{\mu_s t^s}{s!}=\sum_{s\ge c}\frac{\mu_s t^{s-c}}{(s-c)!}
=\sum_{s\ge 0}\frac{\mu_{s+c} t^{s}}{s!},
$
and, on the other hand, \eqref{ita} and \eqref{itb} give 
\begin{equation*}
\E(e^{tX _c})=\int_0^{\infty}f_c(x)e^{tx} \, dx=\frac{1}{\mu_c} \sum_{s\ge 0}\frac{t^s}{s!}\int_0^{\infty}x^{s+c}f(x) \, dx
=\frac{1}{\mu_c} \sum_{s\ge 0}\frac{t^s}{s!}\mu_{s+c}. \qedhere
\end{equation*}
\end{proof}
\pagebreak

Let us now introduce 
\textit{positive stable laws} (also called {\textit{one-sided stable laws}, as their density has support $(0,+\infty)$).
\begin{defi}[Positive stable laws and their negative powers] 
\label{ex:positivestable}
We say that a positive random variable $S_{\alpha}$ follows a stable law of parameter $\alpha\in(0, 1)$ 
if its Laplace transform is $\E(e^{-t S_{\alpha}}) = e^{-t^{\alpha}}$ 
(see~\cite{UchaikinZolotarev1999} for a general presentation involving skewness, scale, and location parameters; they are respectively always $0$, $1$, and $0$ in our work).
The density of $S_\alpha$ is\footnote{Throughout this article, we use that, by analytical continuation,
$1/\Gamma(m) = 0$ whenever $m$ is an integer $\leq 0$.}
\begin{align}
f_{S_{\alpha}}(x)
&=\sum_{n=1}^{\infty}\frac{(-1)^n}{n! \Gamma(-n\alpha)}x^{-n\alpha-1}
\label{StableDensity}\\
&=\frac{1}{\pi}\sum_{n=1}^{\infty}(-1)^{n+1}\frac{\Gamma(n\alpha+1)\sin(\pi n\alpha)}{n!}x^{-n\alpha-1} \notag
\\
&=\frac{1}{\pi} \frac{\alpha}{1-\alpha} \int_0^\pi \frac{K(\phi)}{x^{1/(1-\alpha)}} \exp\left(-\frac{K(\phi)}{x^{\alpha/(1-\alpha)}}\right) d\phi,
\label{StableDensityInt}
\end{align}\noindent
where
\begin{equation*}
K(\phi)=\left(\frac{\sin(\alpha \phi)}{\sin(\phi)}\right)^{1/(1-\alpha)}
\frac{\sin((1-\alpha) \phi)}{\sin(\alpha \phi)}. 
\end{equation*}
Formula~\eqref{StableDensity} was first obtained by Humbert~\cite{Humbert1945}, and then rigorously proven by Pollard~\cite{Pollard1946}
(see also Feller~\cite[Chapter XVII.6, Lemma 1]{Feller}, with the parameter $\gamma=-\alpha$ therein).
Formula~\eqref{StableDensityInt} is due, up to a typo that we corrected here, to Ibragimov and Chernin~\cite{IbragimovChernin1959}.

Now, let $\mybeta>0$ be some real number. 
Since $\P\{\S_\alpha^{-\mybeta}\le x\}=1-\P\{S_{\alpha}< x^{-1/\mybeta}\}$,
we directly obtain from~\eqref{StableDensity} the density of $\S_\alpha^{-\mybeta}$ on its support $(0,+\infty)$:\begin{equation}
f_{\S_\alpha^{-\mybeta}}(x)=\frac{1}{\mybeta}\sum_{n=1}^{\infty}\frac{(-1)^n}{n!\Gamma(-n\alpha)}x^{n\alpha/\mybeta-1}= \frac{x^{-1/\mybeta-1}}{\mybeta} f_{\S_\alpha}(x^{-1/\mybeta}).
\label{StableDensity2}
\end{equation}
Its moments are given by (see, e.g., Janson's survey on moments of Gamma type~\cite{Jan2010}): 
\begin{equation*}
\E(\S_{\alpha}^{-\mybeta s})=\frac{\Gamma(\frac{s\mybeta}{\alpha}+1)}{\Gamma(s\mybeta+1)},\quad s> -\frac{\alpha}{\mybeta}.
\end{equation*}
\end{defi} 

We will encounter composition schemes leading to powers of stable laws, an important subcase of it being the Mittag-Leffler distribution.\footnote{In the literature, there are unfortunately two distinct distributions
which are called \textit{Mittag-Leffler distribution}.
Both of them are defined in terms of the function $E_\alpha(x)$ introduced in 1903 by Mittag-Leffler~\cite{MittagLeffler1903a,MittagLeffler1903b}. 
The first distribution (which we use in this article)
was popularized by Feller~\cite{Feller1949} 
 (with a slight change of variable) and by Darling and Kac~\cite{DarlingKac1957} for the study of the local time of Markov processes.
It has an exponentially bounded tail. The second one, which has a heavy tail, was introduced by Pillai~\cite{Pillai1990}, and should rather be called the Pillai--Mittag-Leffler distribution.}

\begin{defi}[Mittag-Leffler distribution]
\label{ex:mittagleffler}
\def\Ma{M_\alpha}
We say that a random variable $\Ma$ follows a \textit{Mittag-Leffler distribution} $\ML(\alpha)$ if $\smash{\Ma \law \S_\alpha^{-\alpha}}$.
Its moment generating function $\E(e^{t \Ma})$ is the Mittag-Leffler function $E_{\alpha}(t)=\sum_{k\geq 0} \frac{t^k}{\Gamma(1+\alpha k)}$. 
An important special case is
$M_{\frac12}$, the \emph{half-normal distribution} $|\mathcal{N}(0,\sigma^2)|$ with $\sigma=\sqrt{2}$; see Example~\ref{ExHaNo} hereafter.
\pagebreak

More generally, we will encounter computations leading to the following moments of random variables of shape $X_c=\tilt_c(\S_\alpha^{-\mybeta})$
(which are  well defined for $c> -\frac{\alpha}{\mybeta}$):
\begin{equation}
\E(X_c^s)=\frac{\Gamma(\frac{(s+c)\mybeta}{\alpha}+1)}{\Gamma((s+c)\mybeta+1)}\frac{\Gamma(\mybeta c+1)}{\Gamma(\frac{c\mybeta}{\alpha}+1)}
=\frac{\Gamma(\frac{(s+c)\mybeta}{\alpha})}{\Gamma((s+c)\mybeta)}\frac{\Gamma(\mybeta c)}{\Gamma(\frac{c\mybeta}{\alpha})}.
\label{MomentStable}
\end{equation}
Note that, by~\eqref{StableDensity2} and Lemma~\ref{LemMain}, the density of $X_c$ is given by
\begin{equation*}
f_{X_c}(x)=\frac{\Gamma(\mybeta c+1)}{\mybeta \Gamma(\frac{c\mybeta}{\alpha}+1)}\sum_{n=1}^{\infty} \frac{(-1)^n}{n!\Gamma(-n\alpha)}x^{n\alpha/\mybeta+c-1}.
\end{equation*}
E.g., $\smash{\tilt_1(M_{\frac12})}$ is the \emph{Rayleigh distribution} of parameter $\sqrt{2}$; see Example~\ref{ExRay} hereafter.
\end{defi}

Another fundamental instance of such a tilted power of stable distribution is a two-parameter generalization of the Mittag-Leffler distribution,
which was considered in the literature in link with different probabilistic processes, 
for example by Pitman~\cite{Pitman2006}, James~\cite{James2015}, Goldschmidt, Haas, and Sénizergues~\cite{GoldschmidtHaas2015, GoldschmidtHaasSenizergues2020}. 
We will establish in the next section that the analytic phenomenon hiding behind this distribution is a critical composition scheme 
(and we will explain in subsequent sections how it is related to these probabilistic processes). For now,
let us  give a formal definition of this distribution.

\begin{defi}[Two-parameter Mittag-Leffler distribution]
\label{ex:genmittagleffler}

For $\alpha \in (0,1)$ and $\beta > -\alpha$, \linebreak
we say that a random variable $X$ follows a \emph{two-parameter Mittag-Leffler distribution} $\ML ( \alpha, \beta)$ if $X \law (\tilt_{-\beta} (S_{\alpha}))^{-\alpha}$.
Note that one has $\ML(\alpha,0)=\ML(\alpha)$
and that $\ML(\alpha,\beta)$ is uniquely defined by its moments 
\begin{align}
 \label{momgenML}
 \E(X^s) 
 &= \frac{\Gamma\left(s + \frac{\beta}{\alpha} + 1\right) \Gamma(\beta+1)}{\Gamma(\alpha s + \beta + 1) \Gamma\left(\frac{\beta}{\alpha} + 1\right)}
 = \frac{\Gamma\left(s + \frac{\beta}{\alpha}\right) \Gamma(\beta)}{\Gamma(\alpha s + \beta) \Gamma\left(\frac{\beta}{\alpha}\right)}.
\end{align}
Comparing these moments with~\eqref{MomentStable} we directly get that for $\beta=\alpha$ and $c=\beta/\alpha$ we have 
\begin{equation}\label{tiltedtilt}
\tilt_{\beta/\alpha} (\ML(\alpha)) \law \ML(\alpha,\beta), \text{ \quad i.e., \quad }
 \tilt_{\beta/\alpha} (S_{\alpha}^{-\alpha}) = (\tilt_{-\beta} (S_{\alpha}))^{-\alpha}. 
\end{equation}
In other words, the permutation of the tilt and the power 
creates a change of the tilt parameter.
\end{defi}

Next, we discuss product distributions.
First, we recall properties of the beta distribution.

\begin{defi}[Beta distribution] \label{ex:beta}
A \textit{beta-distributed} random variable $B \law \operatorname{Beta}(a,b)$ with parameters $a,b>0$ has 
a probability density function defined on $(0,1)$ by 
\begin{equation*}f(x)=\frac{\Gamma(a+b)}{\Gamma(a) \Gamma(b)}x^{a-1}(1-x)^{b-1}.\end{equation*}
The moments of $B$ are given by 
\begin{equation} \label{eq:MomentBetaFrac}
\E(B^s) =\frac{\Gamma(s+a)\Gamma(a+b)}{\Gamma(s+a+b)\Gamma(a)},\quad s>0,
\end{equation} 
and the beta distribution is uniquely determined by the sequence of its moments. 
Furthermore, let the reader be convinced of the convenient convention $\operatorname{Beta}(a,0)\law 1$. \end{defi}

We now have all the ingredients to present the main properties of the distribution which will play a key rôle in the next sections, namely, the three-parameter Mittag-Leffler distribution.

\begin{defi}[Three-parameter Mittag-Leffler distribution]\label{def:BetaMittagLeffler}
We define the \textit{three-parameter Mittag-Leffler distribution} $\BML(\alpha,\beta,\gamma)$ as the distribution of the product of independent random variables
	\begin{align*}
	Z \law Y \cdot B^{\alpha}
	\end{align*}
where $Y \law \tilt_{\beta/\alpha}(\S_\alpha^{-\alpha}) \law \ML(\alpha,\beta)$ 
	and $B \law \operatorname{Beta}(\beta,\gamma)$ are respectively distributed like a Mittag-Leffler and a beta distribution, 
	with $0<\alpha<1$, $\beta>0$, and $\gamma\geq 0$.
\end{defi}

\begin{lem}
\label{lem:MomentBetaStable}
 The three-parameter Mittag-Leffler distribution $\BML(\alpha,\beta,\gamma)$ is uniquely determined by 
 its moments 
	\begin{equation}
	\E(Z^s)=
	\frac{\Gamma\left(s+\frac{\beta}{\alpha}\right)\Gamma\left(\beta+\gamma\right)}{\Gamma\left(\alpha s+\beta+\gamma\right)\Gamma\left(\frac{\beta}{\alpha}\right)} ,  \quad s>0.
	\label{MomentBetaStable}
	\end{equation}
What is more, one has the following identity
	\begin{equation}\label{GenMittagLefflerWillRuleTheWorld}
	 \ML(\alpha, \beta, \gamma) \law \ML(\alpha, \beta) \operatorname{Beta}(\beta, \gamma)^{\alpha} \law \ML(\alpha,\beta + \gamma) \operatorname{Beta}(\frac{\beta}{\alpha}, \frac{\gamma}{\alpha}).
 \end{equation}
 In particular, one has  $\BML(\alpha,\beta,0)=\ML(\alpha,\beta)$.
Furthermore, one also has the simplification 
\begin{equation*}
\BML(\alpha,\alpha,1-\alpha)=\ML(\alpha), \end{equation*}
and, more generally, for any real $\kappa\geq 1$, $\BML(\alpha,\alpha \kappa,1-\alpha)=\ML(\alpha,\alpha (\kappa-1))$.
\end{lem}
\begin{proof}
The moments characterize a unique distribution via Carleman's criterion~\cite[pp.~189--220]{Carleman23}.
Now, due to the independence of the random variables we have $\E(Z^s)=\E(Y^s) \E(B^{\alpha s})$.
Then, using~\eqref{MomentStable} and~\eqref{eq:MomentBetaFrac}, one gets~\eqref{MomentBetaStable}.
With the relation~\eqref{tiltedtilt}, this gives~\eqref{GenMittagLefflerWillRuleTheWorld}.
The last simplifications of the lemma follow from the identity~\eqref{momgenML}.
\end{proof}

Note that $\BML(\alpha,\beta,\gamma)$
is one important instance of a
``distribution with moments of Gamma type'',
a class of distributions popularized by Janson in his nice thorough survey~\cite{Jan2010},
where he obtained the following asymptotics of the  tail of their density $f(x)$.
\begin{align}\label{tail}
\qquad &f(x) \sim C x^{d-1} \exp(-c x^\frac{1}{1-\alpha}) 
 \text{ \qquad for \ } x\sim +\infty, \\
\text{with \ } &
c=(1-\alpha) \alpha^{\frac{\alpha}{1-\alpha}},
\quad
d=\frac{\beta/\alpha-\beta-\gamma+1/2 }{1-\alpha}, 
\quad 
C= \frac{\Gamma(\beta+\gamma)}{\Gamma(\beta/\alpha)} \frac{\alpha^{\frac{1-2\gamma}{2(1-\alpha)}}}{\sqrt{2\pi(1-\alpha)}}. \notag
\end{align}
The behaviour at 0 of the density of  $\BML(\alpha,\beta,\gamma)$ follows from Eq.~\eqref{density} in the next section
and is summarized in Table~\ref{ML0}.

\begin{figure}[h]
\begin{tabular}{ccccc} 
\includegraphics[width=.17\textwidth]{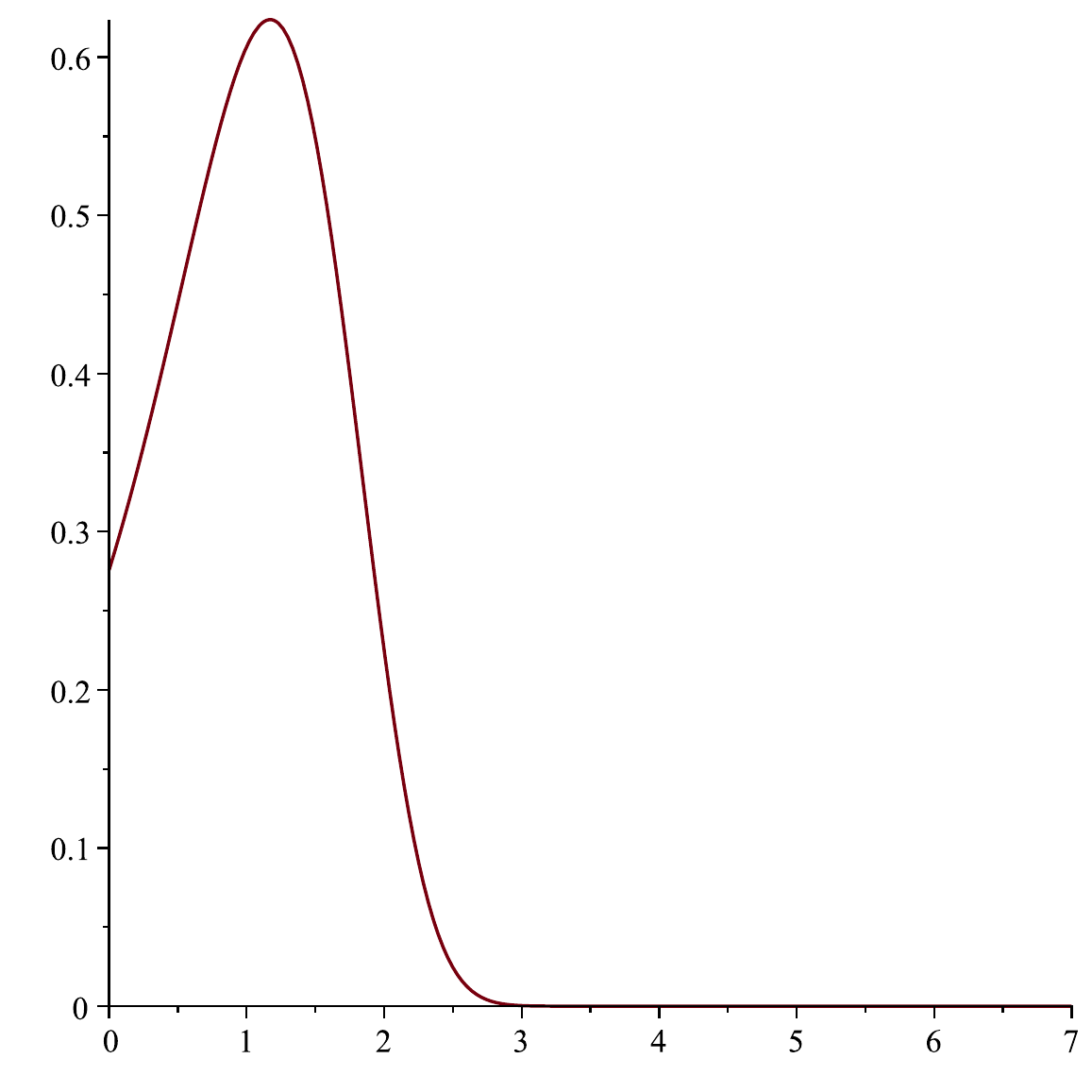} 
&\!\!\!\!\includegraphics[width=.17\textwidth]{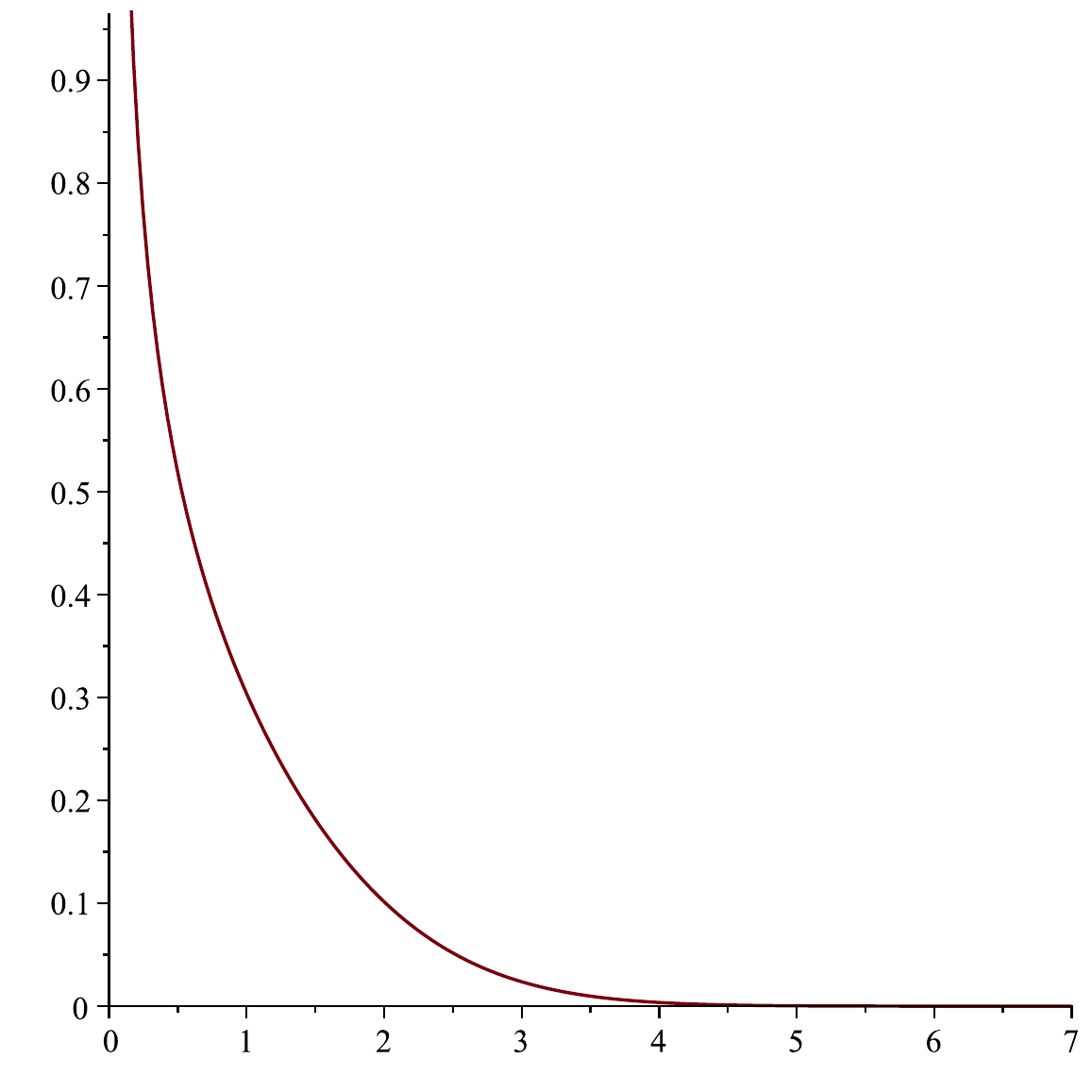} 
&\!\!\!\!\includegraphics[width=.17\textwidth]{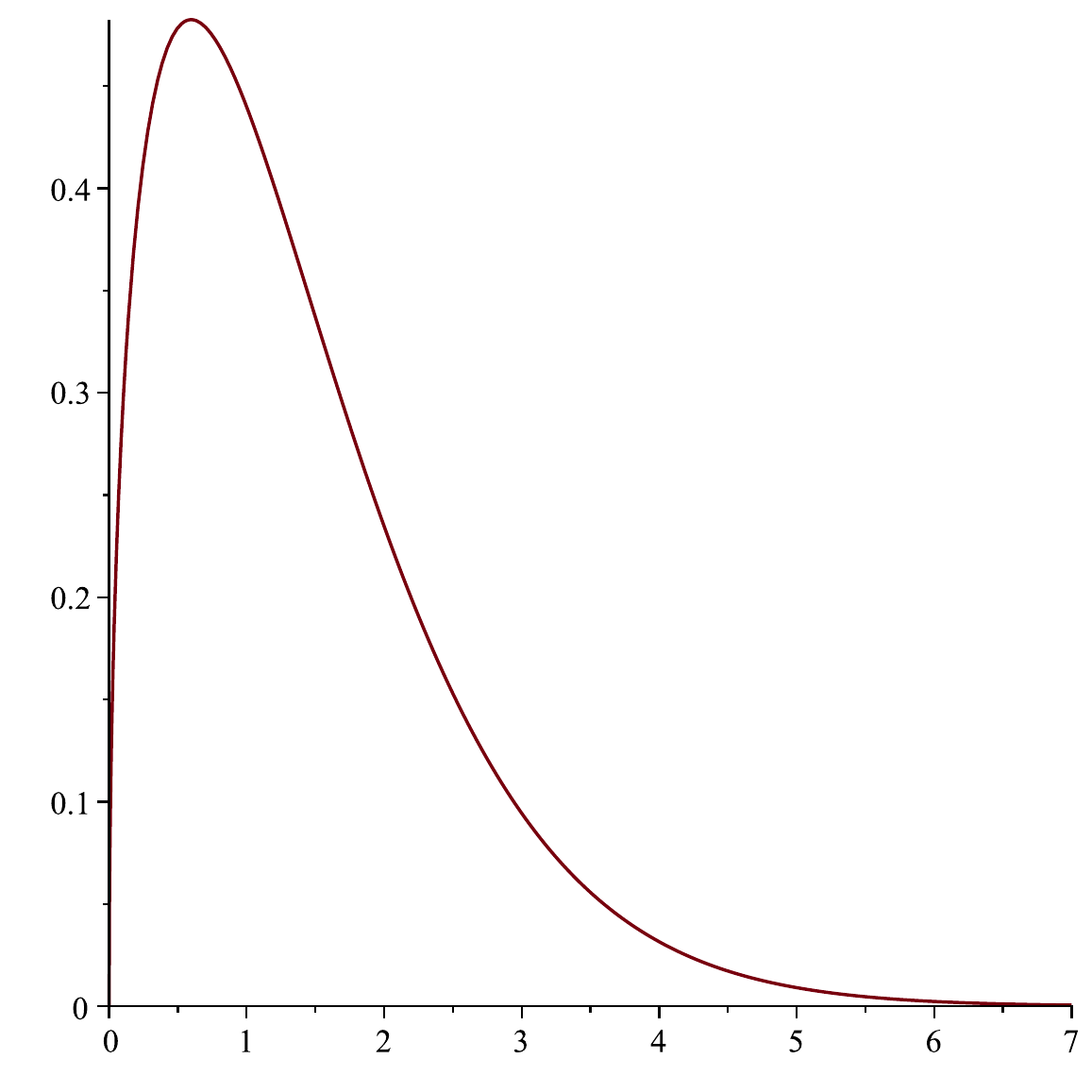} 
&\!\!\!\!\includegraphics[width=.17\textwidth]{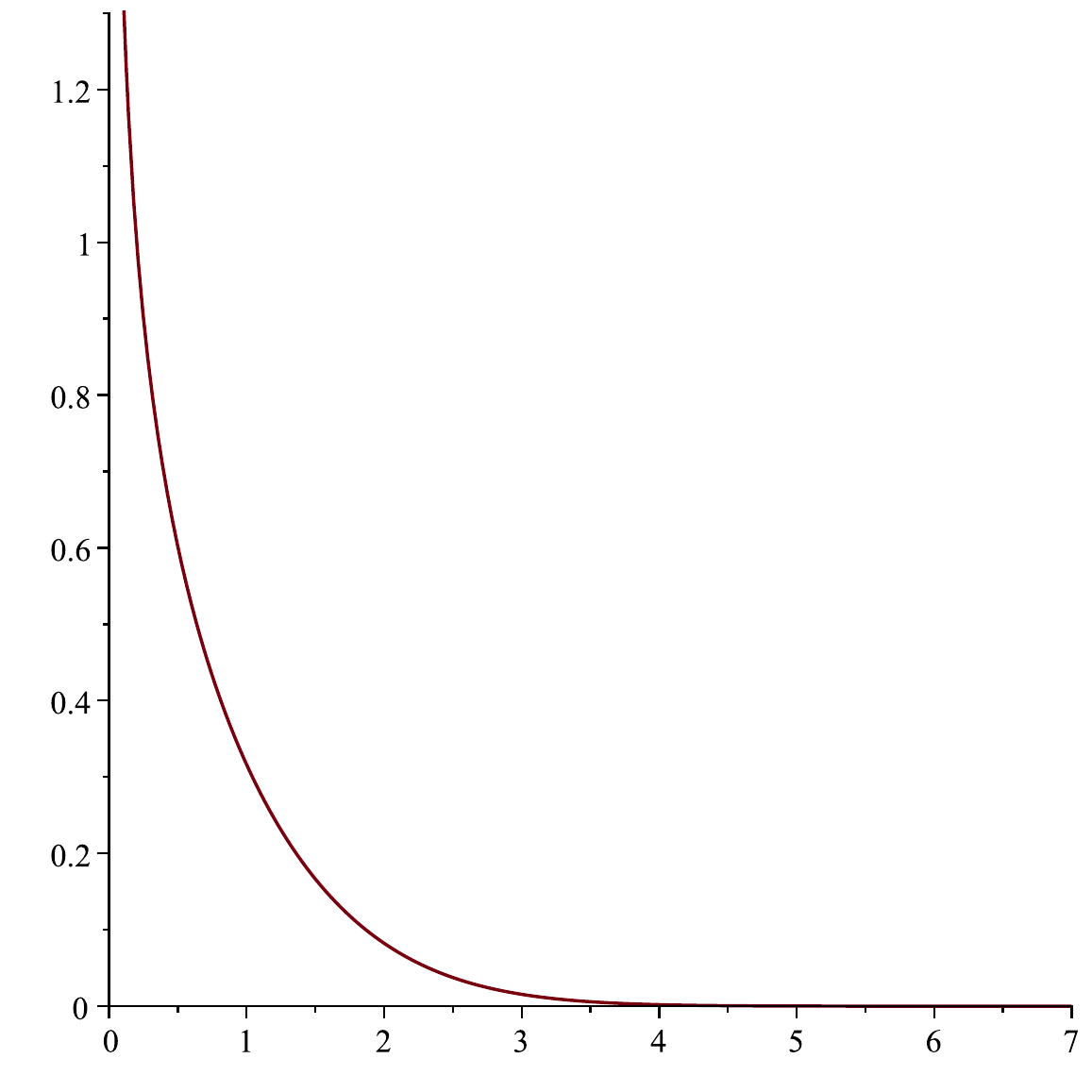} 
&\!\!\!\!\includegraphics[width=.17\textwidth]{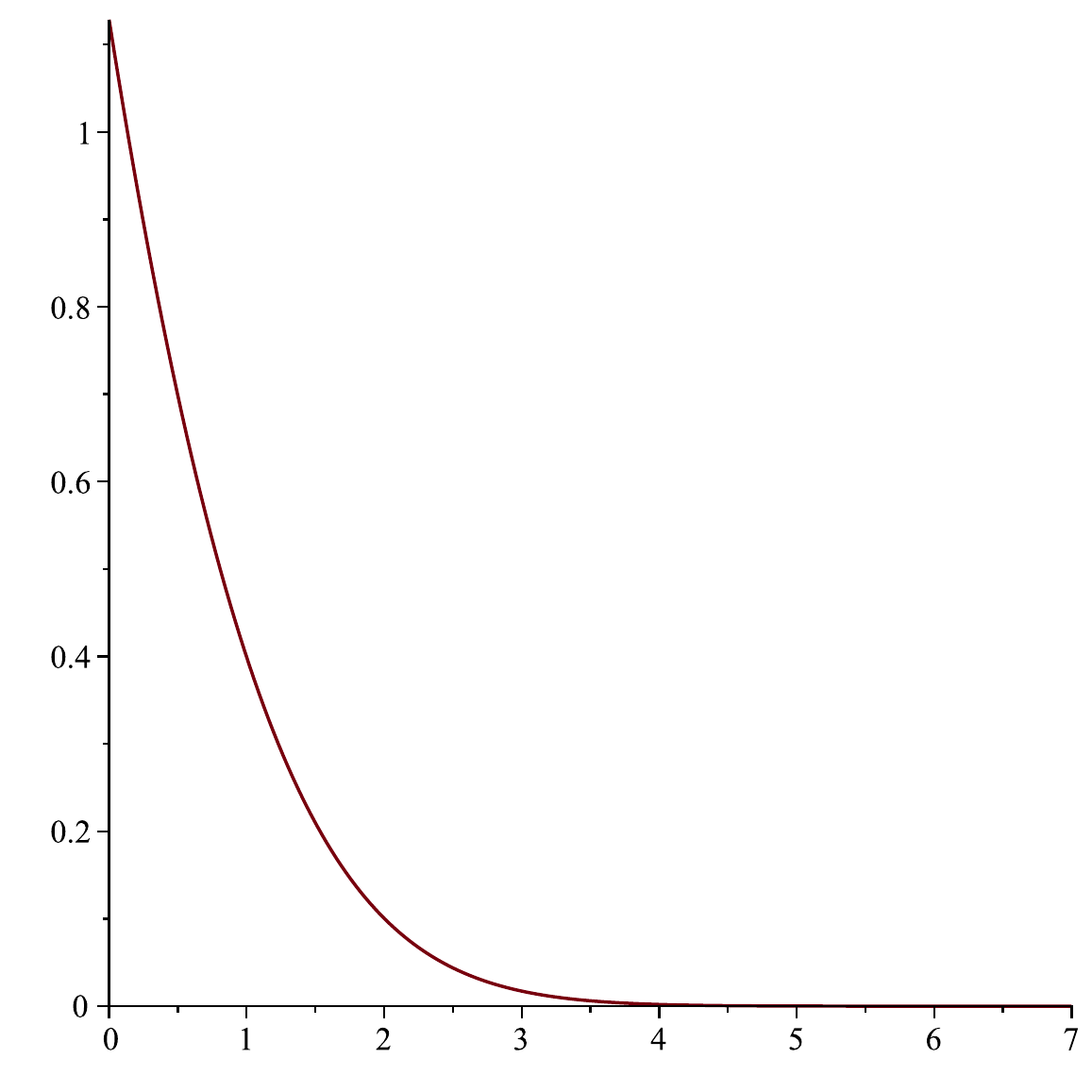} 
\\
$\ML(\frac{3}{4})$&   $\ML(\frac{1}{2},-\frac{1}{4})$  & $ \ML(\frac{1}{2},\frac{1}{3},1)$ &   $\ML(\frac{1}{3},\frac{1}{2},1)$ &   $ \ML(\frac{1}{2},\frac{1}{2},\frac{3}{2})$ 
\end{tabular}
\caption{Several instances of Mittag-Leffler distributions.
All of them have an exponential tail while the behaviour at~0 is more diverse:
One has $\ML(\alpha)\sim \frac{1}{\alpha|\Gamma(-\alpha)|}$, $\ML(\alpha,\beta)\sim Cst \cdot x^{\beta/\alpha}$, 
and $\ML(\alpha,\beta,\gamma) \sim  Cst \cdot  x^{\beta/\alpha-1}$.  
}\label{ML0}
\end{figure}

\pagebreak
\begin{remark}[A link with electromagnetism]\label{electromagnetismremark}
The moment generating function of $Z=\BML(\alpha,\beta,\gamma)$ 
can be expressed 
via the function 
\begin{equation*}
E_{\alpha,\beta'}^{\gamma'}(t):=\sum_{k=0}^\infty \frac{\Gamma(k+\gamma')}{\Gamma(\alpha k+\beta') \Gamma(\gamma')} \frac{t^k}{k!};
\end{equation*}
one has indeed
\begin{equation*}
\begin{gathered}
\E(e^{tZ}) = \Gamma(\beta') E_{\alpha,\beta'}^{\gamma'}(t) 
\text{\qquad (with $\beta' = \gamma+ \beta$ and $\gamma'= \beta/\alpha$).}
\end{gathered}
\end{equation*}
Our article thus provides a unified probabilistic interpretation of the special function $E_{\alpha,\beta'}^{\gamma'}(t)$ which also appears in physics, 
where it is sometimes called the \textit{Prabhakar function}, 
or the \textit{three-parameter Mittag-Leffler function}~\cite{Prabhakar1971,GorenfloKilbasMainardiRogosin2014}.
This special function is also the inverse Laplace transform of the right-hand side 
of the Havriliak--Negami generalization of the Debye and Cole--Cole equations,
which are classical models of dielectric relaxation in electromagnetism~\cite{CapelasMainardiVaz2011,GarraGarrappa2018,GorskaHorzelaBratekDattoliPenson2018}.
What is more, it is proven by G\'orska et al.~in~\cite{GorskaHorzelaLattanziPogany2021} that $g(t):=E_{\alpha,\beta'}^{\gamma'}(-t)$ (with $0<\alpha\leq 1$, $\gamma'>0$, and $\beta'>\alpha\gamma'$, conditions which match exactly our probabilistic setting!) is \textit{totally monotone}: $(-1)^n \partial_t^n g(t)>0$ for $t\in \R^+$.
Bernstein's theorem on totally monotone functions then implies that this function is associated to a density. 
Our article unravels this mysterious density, and explains its universality.
\end{remark}

\begin{remark}[Other appearances of $\ML(\alpha,\beta,\gamma)$]
Since the initial publication of our work on arXiv, the three-parameter Mittag-Leffler distribution also 
appeared in two other articles.
In the first one, Möhle~\cite{Moehle2021Bar}, independently of our results, established that it is
the limit law appearing in a generalization of the Chinese restaurant process (see our Section~\ref{Chinese}).
In the second one, Sibisi~\cite{Sibisi2024FourparamML}, with a motivation coming from special functions, 
considered a four-parameter generalization $\ML_4(a,b,c,d)$. Comparing moments, we then see that 
\begin{flalign*} && 
\ML(\alpha,\beta,\gamma) = \ML_4(\alpha,\gamma,0,\beta) = \ML_4(\alpha,\beta+\gamma,\frac{\beta}{\alpha},0).&& \myqedhere
\end{flalign*}
\end{remark}

Another key ingredient that we shall need is the mixed Poisson distributions. 
These distributions were first introduced by Dubourdieu in 1938
for actuarial mathematics/insurance modelling~\cite{Dubourdieu1939}, and then also studied by Lundberg and others
(sometimes under the name ``compound Poisson processes'', a term that has a different meaning nowadays); 
they were also used for applications in bacteriology by Neyman \cite{Neyman1939}, 
in combinatorics by Kuba and Panholzer~\cite{KuPa2014},
for expectation-maximization algorithms by Karlis~\cite{Karlis},
or for the analysis 
of some point processes by Grandell~\cite{Grandell1997}.
Their unimodality properties are studied by Masse and Theodorescu~\cite{MasseTheodorescu2005}, 
and their tail asymptotics are analysed by Wilmot and Lin in~\cite{Willmot, WillmotLin2001}.

\begin{defi}[Mixed Poisson distributions]
\label{COMPSCHEMEdef1}
Let $X$ denote a nonnegative random variable with cumulative distribution function $U$.
We say that the discrete random variable $Y$ 
has a \emph{mixed Poisson distribution with mixing distribution} $U$ and scale parameter $\mppar \geq 0$, 
if its probability mass function is given for $\ell\ge 0$ by 
\begin{equation*}
\P\{Y=\ell\}=\frac{\mppar^\ell}{\ell!}\int_{\R^{+}} X^{\ell}e^{-\mppar X} dU= \frac{\mppar^{\ell}}{\ell!}\E( X^{\ell}e^{-\mppar X}).
\end{equation*}
This is summarized by the notation $Y\law \MPo(\mppar U)$, or, indifferently, $Y\law \MPo(\mppar X)$.
\end{defi}

Note that mixed Poisson distributions provide a common generalization of three major 
discrete laws ubiquitous in combinatorics~(see \cite[Figure~IX.5]{FlaSe2009}), namely
the Poisson, the geometric, and the negative binomial distributions, 
making them of great importance per se.

\pagebreak 

We emphasize that the \textit{factorial moments}\footnote{
Throughout this work we denote by $\fallfak{x}{n}$ the~\ith{n} falling factorial, $\fallfak{x}n=x(x-1)\cdots (x-n+1)$, $n\ge 0$, with $\fallfak{x}0=1$. It will be used for $\E(\fallfak{X}{n})$, the factorial moment of order $n$ of a random variable $X$.}
 of a mixed Poisson distribution are closely related to the classical \textit{raw moments} of its mixing distribution:
$\E(\fallfak{Y}{s})=\mppar^s \E(X^s),~s\ge 1.$
Additionally, like for any distribution, the factorial and raw moments of $Y$ are related via the Stirling set partition numbers $\Stir{s}{k}$ (also called Stirling numbers of the second kind):
\begin{equation*}
\E(Y^s) = \sum_{k=0}^r \Stir{s}{k} \E(\fallfak{Y}{k}).
\end{equation*}

Such relations are called Stirling transforms~\cite{BersteinSloane1995}. We refer to~\cite{KuPa2014} for more 
properties of the Stirling transform and mixed Poisson distributions,
like the following useful expression for the probability mass function of $\MPo(\mppar X)$ in terms of its \textit{factorial} moments.

\begin{prop}
\label{MOMSEQthe1}
Let $X$ denote a random variable with moment sequence given by $(\mu_s)_{s\in\N}$. 
If a random variable $Y$ has factorial moments given by $\E(\fallfak{Y}s)=\mppar^s\mu_s$,
then $Y\law \MPo(\mppar X)$. 
What is more, the sequence of moments of $Y$ is the Stirling transform of the moment sequence $(\mu_s)_{s\in\N}$, and the probability mass function of $Y$ is given by
\begin{equation*}
\P\{Y=\ell\}=\sum_{s\ge \ell}(-1)^{s-\ell}\binom{s}{\ell}\mu_{s}\frac{\mppar^s}{s!}, \quad \ell\ge 0.
\end{equation*}
\end{prop}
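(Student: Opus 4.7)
The plan is to route everything through the probability generating function (PGF) of $Y$, which provides a clean way to obtain both the mixed Poisson identification and the explicit PMF formula in one stroke.

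First, I would use the standard identity linking PGF and factorial moments: for a non-negative integer-valued $Y$,
\begin{equation*}
\E(u^Y)=\sum_{s\ge 0}\E(\fallfak{Y}{s})\,\frac{(u-1)^s}{s!},
\end{equation*}
valid in the region of $u$ for which the right-hand side converges absolutely. Substituting the hypothesis $\E(\fallfak{Y}{s})=\rho^s\mu_s$ gives
\begin{equation*}
\E(u^Y)=\sum_{s\ge 0}\mu_s\,\frac{\bigl(\rho(u-1)\bigr)^s}{s!}=\E\!\left(e^{\rho(u-1)X}\right),
\end{equation*}
where the exchange of expectation and summation is legitimate precisely because $\E(e^{zX})$ exists in a neighborhood of $z=0$ that contains $z=-\rho$; this forces $u$ in a neighborhood of $1$ (including $u=0$) to be admissible. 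Here is the mild technical point to be careful about: one must check that this neighborhood reaches down to $u=0$, which follows from the assumption that $z=-\rho$ lies in the domain of the moment generating function of~$X$.

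Second, I would rewrite the right-hand side as
\begin{equation*}
\E\!\left(e^{-\rho X}e^{\rho u X}\right)=\sum_{\ell\ge 0}u^\ell\,\frac{\rho^\ell}{\ell!}\,\E\!\left(X^\ell e^{-\rho X}\right),
\end{equation*}
and read off the coefficient of $u^\ell$ to obtain
\begin{equation*}
\P\{Y=\ell\}=\frac{\rho^\ell}{\ell!}\,\E\!\left(X^\ell e^{-\rho X}\right),
\end{equation*}
which, by Definition~\ref{COMPSCHEMEdef1}, is exactly the PMF of $\MPo(\rho X)$. Moment determinacy of $Y$ (a non-negative integer random variable whose PGF is analytic near $u=1$) then ensures $Y\law\MPo(\rho X)$. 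The Stirling-transform statement for the moments of $Y$ follows immediately from the universal identity $\E(Y^s)=\sum_{k=0}^{s}\Stir{s}{k}\E(\fallfak{Y}{k})$ together with $\E(\fallfak{Y}{k})=\rho^k\mu_k$.

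Finally, for the explicit PMF formula, I would extract $\P\{Y=\ell\}=\tfrac{1}{\ell!}\,\tfrac{d^\ell}{du^\ell}\E(u^Y)\big|_{u=0}$ directly from the factorial-moment expansion of $\E(u^Y)$ written above. Differentiating $\ell$ times and evaluating at $u=0$ yields
\begin{equation*}
\P\{Y=\ell\}=\sum_{s\ge \ell}(-1)^{s-\ell}\binom{s}{\ell}\,\frac{\E(\fallfak{Y}{s})}{s!}=\sum_{s\ge\ell}(-1)^{s-\ell}\binom{s}{\ell}\mu_s\frac{\rho^s}{s!},
\end{equation*}
which is the claimed formula. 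The only genuine obstacle is justifying the convergence-and-interchange step in the first display; everything else is formal manipulation. This justification is precisely what the hypothesis on $\E(e^{zX})$ at $z=-\rho$ is tailored to provide, so the proof should go through cleanly.
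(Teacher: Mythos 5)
The paper does not prove this proposition itself: it states the result and attributes it to Kuba--Panholzer (reference \cite{KuPa2014}), so there is no in-paper proof to compare against. Your proof via the probability generating function is the natural route and is essentially correct. The three formal steps --- expanding $\E(u^Y)$ around $u=1$ via factorial moments, recognizing $\E\big(e^{\rho(u-1)X}\big)$, and extracting the coefficient of $u^\ell$ --- do give both the mixed Poisson identification and the alternating-series formula for $\P\{Y=\ell\}$.

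The one genuinely delicate point, which you flag but only partially resolve, is the passage from the Taylor expansion of the PGF at $u=1$ to its evaluation at $u=0$ (and the term-by-term differentiation there). What actually grounds this is the following chain: since $z=0$ is interior to the domain of $\E(e^{zX})$, one has $\E\big((1+\delta)^Y\big)=\sum_s \E(\fallfak{Y}{s})\,\delta^s/s! = \E(e^{\rho\delta X})<\infty$ for small $\delta>0$ (monotone convergence, all terms nonnegative), which forces the PGF $\phi(u)=\sum_\ell p_\ell u^\ell$ to have radius of convergence $>1$ and hence to be analytic in a genuine neighborhood of $u=1$; and since $z=-\rho$ is also in the domain, the Taylor series of $\phi$ at $u=1$, namely $\E(e^{\rho(u-1)X})$, converges at least on a disk reaching $u=0$. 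Analytic continuation (or, more elementarily, the uniqueness of power-series coefficients on the overlap) then identifies $\phi(u)=\E(e^{\rho(u-1)X})$ on a region containing $[0,1]$, and the coefficient extraction at $u=0$ is legitimate. If $-\rho$ is only a boundary point of the domain of $\E(e^{zX})$, the term-by-term differentiation at $u=0$ needs an Abel-type limiting argument rather than an interior-radius argument, but this is routine. So: the approach is sound and the caveat you raise is the right one; just be aware that the positivity/monotone-convergence step for $\E((1+\delta)^Y)$ is what converts the hypothesis on $\E(e^{zX})$ into actual analyticity of the PGF past $u=1$, which is what really licenses the manipulations.
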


Let us give two short examples of mixed Poisson distributions, which, as we shall later see, correspond to ubiquitous cases in combinatorics.

\begin{example}[Mixed Poisson half-normal distribution]
\label{ExHaNo}
A half-normally distributed random variable $X \law \text{HN}(\sigma)$ with parameter $\sigma$ is 
the absolute value of a normally distributed random variable, i.e. $X\law |\mathcal{N}(0,\sigma^2)|$. Consequently, $X$ has the probability density function
\begin{equation*}
 f(x;\sigma) = \frac{\sqrt{2}}{\sigma\sqrt{\pi}} e^{-\frac{x^{2}}{2\sigma^{2}}}, \quad x \geq 0; 
\end{equation*}
alternatively, it is fully characterized by its moment sequence
 $\E(X^s)=\sigma^s 2^{s/2} {\Gamma(\frac{s+1}2)}/{\Gamma(\frac{1}2)}$.
Thus, a discrete random variable $Y$ with probability mass function
\begin{equation*}
\P\{Y=\ell\}=\frac{\mppar^\ell}{\ell!}\cdot\frac{\sqrt{2}}{\sigma\sqrt{\pi}}\int_0^\infty x^{\ell}e^{-\mppar x -\frac{x^2}{2\sigma^2}}\, dx,\quad \ell\ge 0,
\end{equation*}
has a mixed Poisson 
distribution: $Y\law \MPo(\mppar X)$ with $X\law \text{HN}(\sigma)$. Note that we can readily expand the exponential function and obtain various series representations of $\P\{Y=\ell\}$. 
\end{example}

\begin{example}[Mixed Poisson Rayleigh distribution]
\label{ExRay}
A Rayleigh distributed random variable $X \law \text{Rayleigh}(\sigma)$ with parameter $\sigma$ has the probability density function
\begin{equation*}
 f(x;\sigma) = \frac{x}{\sigma^{2}} e^{-\frac{x^{2}}{2\sigma^{2}}}, \quad x \geq 0;
\end{equation*}
alternatively, it is fully characterized by its moment sequence
 $ \E(X^{s}) = \sigma^{s} \, 2^{s/2} \,\Gamma\left(\frac{s}{2}+1\right)$.
Thus, a discrete random variable $Y$ with probability mass function
\begin{equation*}
\P\{Y=\ell\}=\frac{\mppar^\ell}{\ell! \sigma^2}\int_0^\infty x^{\ell+1}e^{-\mppar x -\frac{x^2}{2 \sigma^2}} \, dx,\quad \ell\ge 0,
\end{equation*}
has a mixed Poisson distribution: $Y\law \MPo(\mppar X)$ with $X\law \text{Rayleigh}(\sigma)$. 
\end{example}
\pagebreak 

Last but not least, in our results, 
we shall also encounter another important family of discrete distributions: the \textit{Boltzmann distributions}. 
They were introduced in combinatorics by Duchon, Flajolet, Louchard, and Schaeffer 
in order to perform sampling of combinatorial structures~\cite{Boltz1}.
The starting point of these authors was the idea to give, like in statistical mechanics, a Gibbs measure/Boltzmann weight $x^n$ (for some fixed real number $x$) to each combinatorial object of size~$n$. Note that the objects of same size then follow a uniform distribution. 
It was then a nice surprise that, if one deals with an assemblage of combinatorial objects, the corresponding Boltzmann weights are given by very simple probabilistic laws (similarly to the symbolic method~\cite{FlaSe2009} which directly gives the generating functions of unions/products/cycles of objects).
This led to an outstanding generic linear time sampling algorithm: Its astonishing efficiency is partially due to the fact that, thanks to these Boltzmann weights, the sampling of a product 
of two combinatorial structures is simply obtained by {\em two} independent recursive subsamplings.
The sampling algorithm is thus essentially 
based on the following definition:

\begin{defi}[Boltzmann distribution]
\label{def:Boltzmann}
For any generating function $G(z)=\sum_{n\ge 0}g_n z^n$, 
and for any parameter $x>0$ smaller than the radius of convergence of $G$,
a random variable $X$ follows a \emph{Boltzmann distribution (associated with $G$) of parameter $x$}, denoted by $\mathcal{B}_{G}(x)$, if
\begin{equation*}
\P\{X=n\}=\smash{\frac{g_n x^n}{G(x)}},\quad n\ge 0.
\end{equation*}
\end{defi}
Then, the key idea behind Boltzmann sampling (of objects of size $n$) is to choose $x$ adequately to maximize $\P\{X=n\}$. If the object generated is not of size $n$, one rejects it and restarts the sampling.
This leads to a uniform sampling algorithm of optimal efficiency when $x$ is the unique real root of the equation $x G'(x) = n G(x)$.
This equation is reminiscent of many probabilistic results with mean $\mu= x G'(x)/G(x)$ (e.g., when $G$ encodes the offspring of a Galton--Watson process).
This is no coincidence: By design, Boltzmann sampling ``reverse-engineers'' these results
(see e.g.~\cite{BanderierKubaWagnerWallner2024}, or  the recent works of Sportiello~\cite{Sportiello2021} or Panagiotou, Ramzews, and Stufler~\cite{PanagiotouRamzewsStufler2021,Stufler2022} for more on these aspects).

As we shall see, these Boltzmann distributions also occur in our \textit{critical} composition schemes.
Retrospectively, it explains and puts in a unified framework earlier sporadic occurrences of such distributions
for the limit law of the degree of a random node in simply-generated trees, 
the root degree in simply-generated trees, as well as in \textit{subcritical} composition schemes; see~\cite[pages 460, 629--633]{FlaSe2009}.

\begin{figure}[h]
\includegraphics[width=.243\textwidth,page=1]{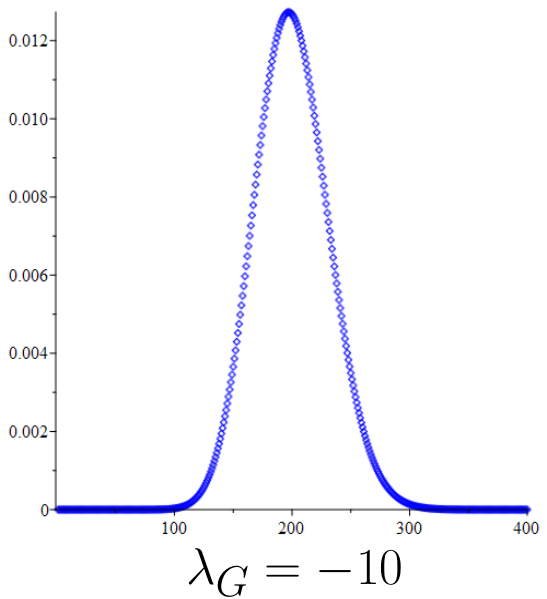} 
\includegraphics[width=.243\textwidth,page=2]{Boltzmann_legend} 
\includegraphics[width=.243\textwidth,page=3]{Boltzmann_legend} 
\includegraphics[width=.243\textwidth,page=4]{Boltzmann_legend} 
\caption{Boltzmann distributions $\mathcal{B}_G(x)$ have different shapes\protect\footnotemark, depending on the singular exponent $\lambda_G$ of the function~$G$: It goes from an asymptotic 
Gaussian shape if $\lambda_G \ll -1$ (and for entire functions) to a spread shape if $\lambda_G \gg -1$. 
The above distributions are drawn with a value of the parameter $x$ such that $\E(\mathcal{B}_G(x))=xG'(x)/G(x) =n$ (with $n=200$). It is interesting that they appear in the context of \textit{critical schemes}, independent\-ly of \textit{uniform random generation} motivations, but, as explained above, both aspects are in fact intimately related.
}
\end{figure}
\footnotetext{See \url{https://lipn.fr/~cb/Papers/CriticalSchemes/} for several animations of the different limit laws occurring in this article.}
\pagebreak

\section{Extended composition scheme\label{SecExtended}}

In the following we state and prove our main theorem on the extended critical composition scheme~\eqref{Eq4} for pure schemes.
For the terms critical and pure, we refer to Definitions~\ref{def:critical} and~\ref{def:pure}, respectively. 
This theorem shows the universality of the three-parameter Mittag-Leffler distribution (introduced in Definition~\ref{def:BetaMittagLeffler}).

\begin{theorem}[Extended composition scheme: pure case]
\label{TheExtended}
In a pure extended critical composition scheme $F(z,u)=G\big(u H(z)\big)M(z)$, the core size~$X_n$, rescaled, converges in distribution and in moments\footnote{We write 
$X_n  {\displaystyle \ensuremath{\myarrow}^{\smash{\raisebox{-3.0pt}{\scriptsize \textit{d\,\,}}}}_{\smash{\raisebox{2.4pt}{\textit{\scriptsize{m\,}}}}}} X$ 
to denote that $X_n$ converges in distribution to $X$, with convergence of all moments, i.e., $\E(X_n^s)\rightarrow \E(X^s)$ for all $s$. This notion of convergence in moments was, e.g., used in~\cite{JansonPouyanne2018,FuchsHwangNeininger2006}.
It is also indirectly used in~\cite{RoeslerRueschendorf2001}, which deals with more constrained models that offer a convergence in $L^p$, for all $p>1$. 
Note that in all our results involving convergence in distribution or in moments, we omit the speed of convergence, which is in fact easily obtained by considering 
the Puiseux expansions of order $2$ of $G/H/M$.} to a random variable $X$
distributed like the three-parameter Mittag-Leffler distribution:
\begin{align}
 \frac{X_n}{\kappa\cdot n^{\lambda_H}} \cmom X, 
 \qquad \text{ with } \qquad
 X \law &\BML(\alpha, \beta, \gamma),\label{MLd} 
\end{align}
where
\begin{align*}
 \alpha=\lambda_H, \quad 
 \beta=-\lambda_G \lambda_H, \quad
 \gamma =-\lambdaM = -\min(0,\lambda_M), \quad
 \text{ and } \quad
 \kappa=\frac{\tau_H}{-c_H}. 
\end{align*}

What is more, one has a local limit theorem 
\begin{equation*}
\P\{X_n=x\cdot \kappa n^{\lambda_H}\}\sim \frac{1}{\kappa n^{\lambda_H}}\cdot f_X(x),
\end{equation*}
with uniform convergence over any interval in $(0,+ \infty)$, and where $f_X(x)$ is the density of $X$:
\begin{equation} \label{density}
f_X(x)=\frac{\Gamma(\beta+\gamma)}{\Gamma(\beta/\alpha)}
\sum_{j\ge 0}\frac{(-1)^j }{j! \Gamma(\gamma-j\alpha)} x^{\beta/\alpha+j-1}.
\end{equation}
\end{theorem}

\begin{remark}[Two simplifications of the three-parameter Mittag-Leffler distribution]
\label{CoExtended1}
In the above theorem,
 if $\lambda_M\geq 0$ (which includes the critical scheme $F(z,u)=G(uH(z))$ as $\lambda_M=+\infty$), then $\gamma=0$ and the three-parameter Mittag-Leffler distribution~\eqref{MLd} simplifies into a two-parameter Mittag-Leffler distribution:
\begin{equation*}
X\law \tilt_{-\lambda_{G}}(\ML(\lambda_{H}))
\law \ML(\lambda_H,-\lambda_G\lambda_H)
. 
\end{equation*}
In particular, for $\lambda_H=\frac12$ and $\lambda_G=-1$ 
the random variable $X$ follows a Rayleigh distribution of parameter $\sigma=\sqrt{2}$; see Example~\ref{ex:mittagleffler}.

Another noteworthy simplification occurs for $\lambda_M<0$, in the special case $\lambda_G=-1$ and $\lambda_H-\lambda_M=1$:
We then obtain a Mittag-Leffler distribution of parameter~$\lambda_H$:
\begin{equation*}
X\law \ML(\lambda_H).
\end{equation*}
In particular, for $\lambda_H=\frac12$ the random variable $X$ follows a half-normal distribution of parameter $\sigma=\sqrt{2}$; see again Example~\ref{ex:mittagleffler}.

Note that the cases with $\lambda_G=-1$ occur in a great many places in applied probability theory;
they indeed correspond to a natural combinatorial framework
where $\mathcal{F}$-objects are essentially \textit{sequences} of ${\mathcal H}$-components
(see~\cite{FlaSe2009} and examples in our Section~\ref{sec:examples}).
\end{remark} 

\begin{proof}[Proof of Theorem~\ref{TheExtended}]
The factorial moments satisfy\footnote{Throughout this work, we denote by $\partial_u$ the differentiation operator with respect to the variable $u$. Accordingly, we use the shorthand notation 
$\partial_u(F)(z,1)=\big(\partial_u F(z,u)\big)\big|_{u=1}$.}:
\begin{equation*}
\E(\fallfak{X_n}s)
=\frac{[z^n] \partial_u^s (F)(z,1)}{[z^n]F(z,1)}
=\frac{[z^n] H(z)^s G^{(s)}\big(H(z)\big)M(z)}{[z^n]G(H(z))M(z)}.
\end{equation*}

In the following we use the notation $\rho_F$, $\tau_F$, $\lambda_F$, and $c_F$ from Section~\ref{sec:prelimsingular} for the singular expansions of $F=G/H/M$.
As we are in a pure critical scheme (Definition~\ref{def:pure}), the unique singularity of $F(z)=G(H(z)) M(z)$ is at $z=\rho_H$. 
Then, we unify 
the three cases~\eqref{ExpansionF-a}, \eqref{ExpansionF-b}, and \eqref{ExpansionF-c} 
by using $\lambdaM = \min(0,\lambda_M)$ and choosing $C_M$ according to the specific case 
($C_M$ is for $\lambda_M \neq \lambda_G \lambda_H$ either $\tau_M$ or $c_M$; 
this quantity $C_M$ will anyway cancel in the end).
Note that the case \eqref{ExpansionF-d} does not hold in a pure scheme.
This gives
\begin{align*}
	F(z) &\sim C_M c_G \left(\frac{-c_H}{\rho_G}\right)^{\lambda_G}\left(1-\frac{z}{\rho_H}\right)^{\lambda_{G}\lambda_H + \lambdaM}.
\end{align*}
Therefore, using the transfer theorem from singularity analysis we get
\begin{equation}
f_n = [z^n]F(z,1)\sim \frac{C_M c_G (-c_H/\rho_G)^{\lambda_G}}{\rho_H^n}\cdot \frac{n^{-\lambda_{G}\lambda_H-\lambdaM-1}}{\Gamma(-\lambda_{G}\lambda_H-\lambdaM)}.
\label{ExpansionF2}
\end{equation}
Using singular differentiation (see \cite[Theorem~VI.8]{FlaSe2009} or~\cite{FFK2005})
for $G(z)$, we get the following singular expansion of the higher-order derivatives $G^{(s)}(z)$, for integer $s\ge 1$:
\begin{equation*}
G^{(s)}(z) \sim (-1)^{s} \frac{c_{G}}{\rho_G^s}\fallfak{\lambda_G}{s}\Big(1-\frac{z}{\rho_G}\Big)^{\lambda_{G}-s}. 
\end{equation*}
Next, from the singular expansion of $H(z)$ we directly get
\begin{equation*}
H(z)^s \sim \tau_H^s - s\tau_H^{s-1}\cdot (-c_{H})\Big(1-\frac{z}{\rho_H}\Big)^{\lambda_H}.
\end{equation*}
Therefore, we get (recall that our scheme is critical, i.e., $H(\rho_H)=\tau_H=\rho_G$) 
\begin{equation*}
\begin{split}
G^{(s)}(H(z)) 
&\sim (-1)^{s} c_{G}\rho_G^{-\lambda_G}\fallfak{\lambda_G}{s}(-c_H)^{\lambda_G-s}\Big(1-\frac{z}{\rho_H}\Big)^{\lambda_{G}\lambda_H-s\lambda_H}.
\end{split}
\end{equation*}
Combining these expansions with the one of $M(z)$ gives the required expansion
\begin{equation*}
H(z)^s G^{(s)}\big(H(z)\big)M(z)\sim 
(-1)^{s} \tau_H^s C_M c_{G}\rho_G^{-\lambda_G}\fallfak{\lambda_G}{s} (-c_H)^{\lambda_G-s}\Big(1-\frac{z}{\rho_H}\Big)^{\lambdaM+\lambda_H\lambda_{G}-s\lambda_H}.
\end{equation*}
Next we rewrite $(-1)^s\fallfak{\lambda_G}{s}$ using the gamma function, that is, we use
$(-1)^s\fallfak{\lambda_G}{s}= \frac{\Gamma(s-\lambda_G)}{\Gamma(-\lambda_G)}.$
Hence, we obtain by extraction of coefficients and singularity analysis
\begin{align}
\label{eq:pureasymptmoments}
\begin{aligned}
[z^n]H(z)^s G^{(s)}\big(H(z)\big)M(z)
& \sim \left(\frac{\tau_H}{-c_H}\right)^s \frac{C_M c_{G} (-c_H/\rho_G)^{\lambda_G}}{\rho_H^n} \times \\
& \qquad 
\frac{\Gamma(s-\lambda_G)}{\Gamma(-\lambda_G)} 
\cdot \frac{n^{-\lambda_{G}\lambda_H-\lambdaM-1+s\lambda_H}}{\Gamma(s\lambda_H-\lambda_{G}\lambda_H-\lambdaM)}.
\end{aligned}
\end{align}
Combining this expression with~\eqref{ExpansionF2} gives
$\E(\fallfak{X_n}s)\sim n^{s\lambda_H}\kappa^s \cdot \mu_s,
$
where
$\displaystyle \kappa:=\frac{\tau_H}{-c_H}$ and $\displaystyle \mu_s:= \frac{\Gamma(s-\lambda_G)\Gamma(-\lambda_G\lambda_H-\lambdaM)}{\Gamma(-\lambda_G)\Gamma(s\lambda_H-\lambda_G\lambda_H-\lambdaM)}$.
\pagebreak

What is more, one has $\E(X_n^s)\sim \E(\fallfak{X_n}s)$ since we can express the raw moments by using the factorial moments and the Stirling numbers of the second kind:
\begin{equation}
\E(X_n^s)=\E\Big(\sum_{j=0}^{s}\Stir{s}{j}\fallfak{X_n}{j}\Big)=\sum_{j=0}^{s}\Stir{s}{j}\E(\fallfak{X_n}{j}).
\label{COMPSCHEMEconversion}
\end{equation}

\noindent 
Consequently, we obtain the moment convergence to the stated moment sequence:
\begin{equation*}
\frac{\E(X_n^s)}{n^{s\lambda_H} \kappa^s}\to \mu_s.
\end{equation*}
By Stirling's formula for the gamma function, one has
 $\Gamma(z) = \Bigl(\frac{z}{e}\Bigr)^{z}\frac{\sqrt{2\pi }}{\sqrt{z}} \Bigl(1+ \O\left(\frac{1}{z}\right)\Bigr);$
this entails that for $s\to\infty$ the moments satisfy 
\begin{equation*}
(\mu_s)^{-1/(2s)}\sim \sqrt{e^{1-\lambda_H} \lambda_H^{\lambda_H}} s^{\frac{H-1}{2}}\left(1+ \O\left(\frac{1}{s}\right)\right).
\end{equation*}
As $0 < \lambda_H < 1$, the divergence in Carleman's criterion~\cite[pp.~189--220]{Carleman23} is satisfied: 
\begin{equation}
\sum_{s=0}^{\infty}\mu_s^{-1/(2s)}=+\infty;
\label{eq:Carleman}
\end{equation}
consequently, the moment sequence $(\mu_s)_{s\in\N}$ characterizes a unique distribution. 
Now, the Fr\'echet--Shohat theorem (see their original article~\cite{FrSh1931} or the book by Loève~\cite[Sec.~11.4]{Loeve1977}) states that
if $\E[Y_n^r] \rightarrow \E[Y^r]$ for all $r\in \N$, where $Y$ is uniquely characterized by its moments, then $Y_n$ converges to $Y$ in distribution.
In our case, this implies the convergence in distribution of the random variable $\frac{X_n}{\kappa\cdot n^{\lambda_H}}$ to a random variable $X$ with moment sequence $(\mu_s)_{s\in\N}$.

Concerning the local limit theorem, we have to analyse 
$\P\{X_n=k\}=\frac{g_k[z^n]H(z)^kM(z)}{f_n}$, 
for $k=x\cdot\kappa\cdot n^{\lambda_H}$, with $x$ in a compact subinterval of $(0,\infty)$. 
First, we note that by~\eqref{EqSA1} applied to $G(z)$ we directly obtain 
\begin{equation}
g_k\sim \frac{c_G}{\rho_G^k}\cdot \frac{(\kappa x)^{-\lambda_G-1}n^{-\lambda_H\lambda_G-\lambda_H}}{\Gamma(-\lambda_G)}.
\label{ExpansionG4}
\end{equation}
It remains to determine the asymptotics of $[z^n]H(z)^k M(z)$.
We have
\begin{equation*}
[z^n]H(z)^k M(z)=\frac{1}{2\pi i}\oint \frac{H(z)^k M(z)}{z^{n+1}}\, dz.
\end{equation*}
Introducing the point $A$ of coordinates $(\frac{1}{n}, \rho_H (1+\frac{\log^2 n}{n}))$, 
this Cauchy integral can be transformed into an integral over a larger contour in the Delta-domain (in blue in Figures~\ref{fig:deltadomain} 
and~\ref{fig:Hankel}). Then, setting $z=\rho_H(1+t/n)$ leads to an integral asymptotically concentrated on the Hankel contour $\Ce$ (which starts and ends at $+\infty$):
\begin{equation*}
[z^n]H(z)^kM(z)\sim \frac{\tau_H^k C_M}{\rho_H^n \ n^{1+\lambdaM}}\cdot \frac{1}{2\pi i}\int_{\Ce} (-t)^{\lambdaM} e^{-t- x (-t)^{\lambda_H}} \, dt.
\end{equation*}

\noindent Then, expanding $e^{-x (-t)^{\lambda_H}}$ leads to 
\begin{equation}\label{expanded}
[z^n]H(z)^kM(z)\sim \frac{\tau_H^k C_M}{\rho_H^n \ n^{1+\lambdaM}}\cdot \frac{1}{2\pi i}\int_\Ce e^{-t} \sum_{j\ge 0}\frac{(-x)^j}{j!}(-t)^{j\lambda_H+\lambdaM} \, dt,
\end{equation}
in which we recognize the following Hankel contour representation~\cite[Section~12.22]{WhittakerWatson1927}: 
\begin{equation*}
\frac{1}{\Gamma(-z)} = \frac{1}{2 i \pi} \int_\Ce (-t)^{z} e^{-t} \, dt.
\end{equation*}

Recall that we use, by analytic continuation, $1/\Gamma(m)=0$ for any integer $m<0$; this
avoids heavier expressions relying on Euler's reflection formula 
$\frac{1}{\Gamma(-z)}=-\frac{1}{\pi} \sin(\pi z) \Gamma(1+z)$.
\pagebreak

Now, combining the expansions of $f_n$ from~\eqref{ExpansionF2}, $g_k$~from~\eqref{ExpansionG4}, 
and term-wise integration in~\eqref{expanded},
we get the desired local limit theorem:
\begin{align}
\P\{X_n=k\}
 &=\frac{g_k}{f_n}[z^n]H(z)^kM(z) \notag \\[-1.4mm]
 &\sim \frac{\Gamma(-\lambda_{G}\lambda_H-\lambdaM)}{\kappa n^{\lambda_H}\Gamma(-\lambda_G)}
 \sum_{j\ge 0}\frac{(-1)^j}{j! \Gamma(-j\lambda_H-\lambdaM)} x^{j-\lambda_{G}-1}. \label{LLT}
\end{align}

\begin{figure}[!t]
 \centering
 \includegraphics[height=2.65cm]{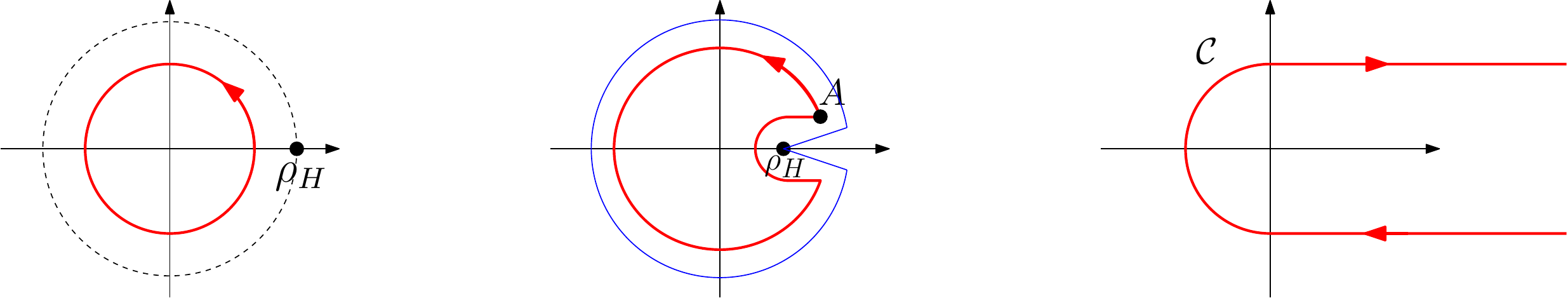}
 \caption{The integration contour of the Cauchy integral (left) 
 is transformed into a larger contour (in red in the middle image) inside a Delta-domain (in blue), becoming in the limit the Hankel contour $\Ce$ (right), allowing the identification of the gamma function.}
 \label{fig:Hankel}
\end{figure}

It remains to verify that $f_X(x)$ is the density function of $X$ with moments 
\begin{equation*}\mu_s=\frac{\Gamma(s-\lambda_G)\Gamma(-\lambda_G\lambda_H-\lambdaM)}{\Gamma(-\lambda_G)\Gamma(s\lambda_H-\lambda_G\lambda_H-\lambdaM)}.
\end{equation*}
Note that the gamma function is never zero and that its only singularities are simple poles at the negative integers.
Therefore $\mu_s$ (considered as a complex function of $s$)
has simple poles at $\rho_0=\lambda_G$ (if $\lambdaM \neq 0$, which entails $\lambda_G<0$ as we have a pure scheme)
 and at $\rho_j=\lambda_G-j$ (for $j\in\{1,2,\dots\}$).
Then, as $\mu_{s-1}$ is the Mellin transform of the density $f_X(x)$ of $X$,
an inverse Mellin computation\footnote{See~\cite[Appendix B.7]{FlaSe2009} for more on this method; see also~\cite[Theorem~5.4]{Jan2010} 
and \cite[Equation~(6.12)]{Jan2010} for similar results on the class of functions with moments of Gamma type.}
implies that $f_X(x)$ is expressible in terms of the residues of $\mu_s$: 
\begin{equation*}
f_X(x)=\frac{1}{2 \pi i} \int_{\sigma-i\infty}^{\sigma+i\infty} \mu_{s-1} x^{-s} ds = \sum_{\substack{\text{$\sigma_j=1+\rho_j$ pole of $\mu_{s-1}$}\\ \sigma_j<\sigma}} \Res_{s=\sigma_j}(\mu_{s-1})x^{-\sigma_j},
\end{equation*}
which is valid for $x>0$, and where $\sigma=1+\rho_0+\epsilon$ is 
in the fundamental strip of $ \mu_{s-1}$,
 and 
\begin{align*}
\Res_{s=\sigma_j}(\mu_{s-1})
&=\Res_{s=\rho_j}(\mu_s)
= \frac{(-1)^j}{j!}\cdot\frac{\Gamma(-\lambda_G\lambda_H-\lambdaM)}{\Gamma(-\lambda_G)\Gamma(-j\lambda_H-\lambdaM)}.
\end{align*}
Summing for $j\geq 0$ (we can include $\rho_0$, even if it is not a pole,
as the corresponding residue is then 0) 
gives the same density function $f_X(x)$ as in the local limit theorem~\eqref{LLT}. 

Now, thanks to our probability distribution melting pot section~\ref{ssMP}, 
we identify the corresponding limit law by matching the parameters in the moments of 
the three-parameter Mittag-Leffler product given in Lemma~\ref{lem:MomentBetaStable}.
\end{proof}

Above, we have established the limit laws occurring for analytically {\em pure} composition schemes.
Next we deal with the analytically {\em confluent} and {\em degenerate} cases of Definition~\ref{def:pure}. 
They lead either to discrete distributions, or, more interestingly, to a mixture of discrete and continuous distributions. 
This generalizes the phenomenon observed in~\cite{BFSS2001}, where for $\lambda_H=\frac32$ the limit law also consists of a discrete part \textit{plus} a continuous part, a map-Airy distribution (this phenomenon also occurs for variants of 3-connected graphs; see~\cite{GimenezNoyRue2013}). 
The following two theorems explain the connection between this discrete part 
and Boltzmann distributions (usually used for random sampling; see, e.g., \cite{Boltz1,Boltz2}).
\pagebreak

\begin{theorem}[Extended composition scheme: degenerate case]
\label{TheExtendedDegenerate}
	Let a degenerate extended critical composition scheme $F(z,u)=G\big(u H(z)\big)M(z)$ with $0<\lambda_H<1$ be given.

	For $0<\lambda_G<1$ and $\lambda_M < \lambda_G \lambda_H$, the core size~$X_n$ converges for $k \geq 0$ and $n \to \infty$ to a Boltzmann distribution $\mathcal{B}_G(\rho_G)$ (see Definition~\ref{def:Boltzmann}):
		\begin{equation*}
		\P\{X_n=k\}\to \P\{\mathcal{B}_G(\rho_G)=k\}=\frac{g_k \rho_G^k}{G(\rho_G)}.
		\end{equation*}

	For $\lambda_G>1$, the core size~$X_n$ has for $k \geq 0$ and $n\to\infty$ the following behaviour:
	\begin{itemize}
		\item For $\lambda_M<\lambda_H$, the random variable $X_n$ converges to a Boltzmann distribution $\mathcal{B}_G(\rho_G)$:
		\begin{equation*}
		\P\{X_n=k\}\to \frac{g_k \rho_G^k}{G(\rho_G)}.
		\end{equation*}
		\item For $\lambda_M=\lambda_H$, the random variable $X_n$ converges to a convex combination of two discrete independent Boltzmann distributions:
		\begin{equation*}
		\text{Be}(p)\cdot \mathcal{B}_G(\rho_G) + (1-\text{Be}(p))\cdot \mathcal{B}_{G'}(\rho_G),
		\end{equation*}
		with 
		$p = \frac{c_M G(\rho_G)}{ c_M G(\rho_G) + c_H \tau_M G'(\rho_G) }$
		and where the Bernoulli distribution $\text{Be}(p)$ is independent of $\mathcal{B}_G(\rho_G)$ and $\mathcal{B}_{G'}(\rho_G)$. 
		We have
		\begin{equation*}
		\P\{X_n=k\}\to p\cdot \frac{g_k \rho_G^k}{G(\rho_G)} + (1-p)\cdot \frac{k g_k \rho_G^{k-1}}{G'(\rho_G)}. 
		\end{equation*}
			\item For $\lambda_M>\lambda_H$, the random variable $X_n$ converges to a Boltzmann distribution $\mathcal{B}_{G'}(\rho_G)$:
		\begin{equation*}
		\P\{X_n=k\}\to \frac{k g_k \rho_G^{k-1}}{G'(\rho_G)}. 
		\end{equation*}
	\end{itemize}
\end{theorem}

\begin{proof}
Let us start with the case $0<\lambda_G<1$. We study for arbitrary but fixed $k\in\N$ the probability $\P\{X_n=k\}$, as $n$ tends to infinity. 
Using~\eqref{PXnk} and applying the singular expansion of Lemma~\ref{lem:Fgenericasy} to $H(z)$ and $M(z)$, one gets
\begin{align}
\label{eqn:Boltz1}
\begin{aligned}
\P\{X_n=k\} &= g_k\frac{[z^n]H(z)^k\cdot M(z)}{f_n} \\
						&\sim \frac{g_k}{f_n} [z^n] \left( \left( \tau_H^k + k \tau_H^{k-1} c_{H} \big(1-\frac{z}{\rho_H}\big)^{\lambda_H}\right) \left( \tau_M + c_M(1-\frac{z}{\rho_H})^{\lambda_M} \right) \right).
\end{aligned}
\end{align}
As $\lambda_M<\lambda_G \lambda_H<\lambda_H$, the singular exponent $\lambda_M$ dominates the asymptotics
and applying singularity analysis~\eqref{EqSA1} leads to
\begin{equation*}
\P\{X_n=k\} \quad \sim \quad \frac{g_k}{f_n} [z^n] \tau_H^k c_M(1-\frac{z}{\rho_H})^{\lambda_M} \quad \sim \quad \frac{g_k \tau_H^k}{\tau_G}\quad=\quad \frac{g_k \rho_G^k}{G(\rho_G)}, 
\end{equation*}
where we simplified with the asymptotics $f_n\sim \tau_G c_M\frac{1}{\rho_H^{n}}\frac{n^{-\lambda_M-1}}{\Gamma(-\lambda_M)}$
coming from the expansion~\eqref{ExpansionF-c}, and where we used $\tau_H=\rho_G$ and $\tau_G=G(\rho_G)$ which follows by our assumptions on criticality.
\smallskip

Let us continue with the case $\lambda_G>1$.
From expansion~\eqref{ExpansionF-d} we see that we now have to distinguish three subcases:
$\lambda_M<\lambda_H$, $\lambda_M=\lambda_H$, or $\lambda_H<\lambda_M$.
The case $\lambda_M<\lambda_H$ is exactly the same as before.
The case $\lambda_M>\lambda_H$ is different, yet the results are derived analogously:
the asymptotics of $f_n$ depend now on $\lambda_H$ and are given by
\begin{equation*}
f_n \sim G'(\rho_G) \tau_M c_H \frac{1}{\rho_H^{n}}\frac{n^{-\lambda_H-1}}{\Gamma(-\lambda_H)}.
\end{equation*}
\pagebreak

\noindent  Then, we obtain from~\eqref{eqn:Boltz1} where now again the contribution from $H(z)$ dominates
\begin{equation*}
\P\{X_n=k\}\sim \frac{k g_k \tau_H^{k-1}}{ G'(\rho_G)}=\frac{ k g_k \rho_G^{k-1}}{G'(\rho_G)}.
\end{equation*} 

Finally, in the case $\lambda_M=\lambda_H$ the previous two contributions are mixed as the first two terms in~\eqref{ExpansionF-d} contribute:
The asymptotics of $f_n$ are given by 
\begin{equation*}
f_n \sim \left( G'(\rho_G) \tau_M c_H + c_M \tau_G \right) \frac{1}{\rho_H^{n}}\frac{n^{-\lambda_H-1}}{\Gamma(-\lambda_H)}.
\end{equation*}
Note that the coefficient is not zero, as both terms are negative: $G'(\rho_g),\tau_M,\tau_M>0$ and $c_H,c_M<0$ due to the sign property~\eqref{sign_property} for $0<\lambda_H<1$.
Then, we obtain from~\eqref{eqn:Boltz1} where again both contributions have to be taken into account
\begin{equation*}
\P\{X_n=k\}
\sim \frac{c_M g_k \rho_G^k + \tau_M c_H k g_k \rho_G^{k-1}}{ G'(\rho_G) \tau_M c_H + c_M \tau_G }
=p \cdot \frac{g_k \rho_G^k}{G(\rho_G)} + (1-p) \cdot \frac{ k g_k \rho_G^{k-1}}{G'(\rho_G)},
\end{equation*}
where $p = \frac{c_M G(\rho_G)}{ c_M G(\rho_G) + c_H \tau_M G'(\rho_G) } \in (0,1)$ 
by the sign property~\eqref{sign_property}. 
We thus get a linear combina\-tion of two Boltzmann distributions, 
weighted by a Bernoulli random variable $\text{Be}(p)$. 
\end{proof}

\begin{theorem}[Extended composition scheme: confluent case]
\label{TheExtendedConfluent}
	Let a confluent extended critical composition scheme $F(z,u)=G\big(u H(z)\big)M(z)$ with $0 < \lambda_H < 1$ be given (i.e., $0 < \lambda_G < 1$ and $\lambda_M=\lambda_G \lambda_H$). 
	Then the core size~$X_n$ is a convex combination of a Boltzmann distribution $\mathcal{B}_G(\rho_G)$ 
and an asymptotically continuous random variable $Z_n$:
		\begin{equation*}
		X_n \law \text{Be}(p)\cdot \mathcal{B}_G(\rho_G) + (1-\text{Be}(p))\cdot Z_n,\qquad \frac{Z_n}{\kappa\cdot n^{\lambda_H}}\cmom X,
		\end{equation*}
		where $\kappa$ and the limit law 
		$X\law \ML(\lambda_{H},-\lambda_{G}\lambda_{H})$ 
		are the same as in Remark~\ref{CoExtended1}, 
		and where $\text{Be}(p)$ (with $p = \frac{c_M G(\rho_G)}{c_M G(\rho_G) + \tau_M c_G (-c_H/\rho_G)^{\lambda_G}}$) is
independent of $\mathcal{B}_G(\rho_G)$, $Z_n$, and $X$. 
\end{theorem}

\begin{proof}
The proof follows the same lines as the one of Theorem~\ref{TheExtendedDegenerate}.
We start from expansion~\eqref{ExpansionF-c}.
Due to $\lambda_M=\lambda_G \lambda_H$ both terms contribute, and we get 
\begin{align}
\label{ExpansionF3}
f_n \sim \frac{\tau_M c_G (-c_H/\rho_G)^{\lambda_G} + \tau_G c_M}{\rho_H^{n}}\frac{n^{-\lambda_M-1}}{\Gamma(-\lambda_M)}.
\end{align}
Then we extract the asymptotics from~\eqref{eqn:Boltz1}, where now the contributions from $M(z)$ dominate, as $\lambda_M<\lambda_G$:
\begin{equation}\label{Pmiss}
\P\{X_n=k\} = \frac{c_M g_k \tau_H^k}{\tau_M c_G (-c_H/\rho_G)^{\lambda_G} + \tau_G c_M} +o(1)
= p \cdot \frac{g_k \rho_G^k}{G(\rho_G)} +o(1), 
\end{equation}
Using the sign property~\eqref{sign_property} we get $p \in (0,1)$.
So, for large $n$, $X_n$ behaves with probability $p$ like $\mathcal{B}_G(\rho_G)$, 
but what happens with probability $1-p$? Where is this missing mass in~\eqref{Pmiss}? 
For sure, it is sneakily spread in $\sum_k o(1)$: It turns out that more and more mass is distributed in the range $k \sim \Theta(\kappa n^{\lambda_H})$, leading to an asymptotic continuous distribution therein.

In order to identify this distribution, we compute the factorial moments of $X_n$ like in the proof of Theorem~\ref{TheExtended}.
We again use the singular expansions of $G^{(s)}\big(H(z)\big)$, $H(z)^s$, and $H(z)^s G^{(s)}\big(H(z)\big) M(z)$. 
Formula~\eqref{eq:pureasymptmoments} holds verbatim with $C_M = \tau_M$.
The big difference lies now in the asymptotics of $f_n$: It is given by~\eqref{ExpansionF2} in the pure case and by~\eqref{ExpansionF3} in the confluent case.
Thus, after rescaling~\eqref{eq:pureasymptmoments} by $f_n$, the factorial moments have the same shape, 
but with an additional prefactor $1-p$:
$\E(\fallfak{X_n}s)\sim (1-p)\cdot \E(\fallfak{Z_n}s)$, which proves the claim.
\end{proof}

\begin{remark}[Physical interpretation of the bimodal behaviour]
As illustrated by Table~\ref{tab:extended} on page~\pageref{tab:extended}, the confluent case gives a bimodal distribution.
The first mode is dictated by small values of $k$, where a Boltzmann distribution is dominant, 
while for larger values of $k\approx n^{\lambda_H}$, a second mode appears, which is associated with a continuous Mittag-Leffler distribution. 
This phenomenon explains the following behaviour:
If one picks an atom at random in a structure of size $n$, 
then with probability $p$ it lies in a large $\mathcal{H}$-block of size $\Theta(n)$ and with probability $1-p$ in a smaller $\mathcal{H}$-block of size $\Theta(n^{1-\lambda_H})$.
\end{remark}

In the next section, we refine these considerations 
by having a closer look at the distribution of the $\mathcal{H}$-blocks of any given size. 

\section{Size-refined composition scheme\label{SecRefined}}

In this section, we give the limit laws for the profile of combinatorial structures given by a critical composition scheme.
We focus here on schemes which are analytically pure (see Definition~\ref{def:pure}), while 
we handle the confluent and degenerate cases in our companion article~\cite{BanderierKubaWallner2021b} 
(as they require additional technical details and different families of limit laws, which also pop up for $\lambda_H>1$).
The profile is captured by the size-refined composition scheme~\eqref{eq:scheme_size-size-refined}. 
As we see in the theorem below, we get three successive asymptotic régimes,
each leading to its own limit law. Two of these limit laws are expressible 
in terms of the three-parameter Mittag-Leffler distribution of Theorem~\ref{TheExtended}
(see also Definition~\ref{ex:genmittagleffler} and Definition~\ref{def:BetaMittagLeffler}).

\begin{theorem}[Mixed Poisson limit behaviour for the size-refined scheme]
\label{TheRefined}
Consider a size-refined pure critical composition scheme
\begin{equation*}
F(z,v)=G\big(H(z) - (1-v)h_j z^j\big)M(z), \text{ with $j\in\N$.}
\end{equation*}
 Let $\mppar_{n,j}=\frac{\rho_H^{j}}{-c_H} h_j n^{\lambda_H}$, and $X$ be the three-parameter Mittag-Leffler distribution of Theorem~\ref{TheExtended}:
\begin{align*}
 X \law \BML(\alpha, \beta, \gamma),
\end{align*}
where $\alpha=\lambda_H$, $\beta=-\lambda_G\lambda_H$, and $\gamma=-\min(0,\lambda_M)$. 

 Then, the random variable $X_{n,j}$, which counts the number of $\HH$-components of size $j$
 possesses three successive asymptotic régimes, with a phase transition at $j=\Theta(n^{\frac{\lambda_H}{1+\lambda_H}})$:
 \begin{enumerate}[(i)]
 \item\label{it:limita} For $j\ll n^{\frac{\lambda_H}{1+\lambda_H}}$, we have $\mppar_{n,j}\to+\infty$ and convergence in distribution and in moments: 
 \begin{align*}
 \frac{X_{n,j}}{\mppar_{n,j}} \cmom X.
 \end{align*}
 \item\label{it:limitb} For $j\sim r\cdot n^{\frac{\lambda_H}{1+\lambda_H}}$, $r\in (0,\infty)$, we have $\mppar_{n,j}\to\mppar$ with 
	$
	\mppar=r^{-\frac{\lambda_H}{1+\lambda_H}}\cdot\frac{1}{-\Gamma(-\lambda_H)}
	$
	and 
	convergence in distribution and in moments towards a mixed Poisson distribution: 
 \begin{align*}
 X_{n,j} \cmom \MPo(\mppar X).
 \end{align*}
 \item \label{it:limitc} For $j\gg n^{\frac{\lambda_H}{1+\lambda_H}}$, we have $\mppar_{n,j}\to 0$, 
and $X_{n,j}$ converges to a Dirac distribution at 0. 
	\end{enumerate}
\end{theorem}
\pagebreak

\begin{remark}[Phase transition I]
The intuition behind the phase transition is as follows:
In the limit $n \to \infty$, there are many small ($j\ll n^{\frac{\lambda_H}{1+\lambda_H}}$), some giant ($j\sim r n^{\frac{\lambda_H}{1+\lambda_H}}$), 
and no super-giant ($j\gg n^{\frac{\lambda_H}{1+\lambda_H}}$) $\HH$-components of size $j$. 
 It is interesting to compare this situation with the birth of the giant component in Erdős--R\'enyi random graphs~\cite{JansonKnuthLuczakPittel1993}.
		Note that the case $j\in\N$ fixed (i.e., independent of $n$ as $n$ tends to infinity) falls into the case $j\ll n^{\frac{\lambda_H}{1+\lambda_H}}$.
\end{remark}

\begin{remark}[Phase transition II]
	For the often observed case of a square-root singularity of $H(z)$ (i.e., $\lambda_H=\frac12$), we reobtain the critical range $j=\Theta(n^{1/3})$, which was already observed in the mixed Poisson Rayleigh distributions in~\cite{KuPa2014}.
	 Furthermore, as one has 
	\begin{equation*}\E(\fallfak{X_{n,j}}s)=\mppar_{n,j}^s\cdot \E(X^s) \cdot (1+o(1)),\end{equation*}
	this offers en passant a link between the $\mppar_{n,j}$'s and the rescaling factor $\kappa n^{\lambda_H}$ in Theorem~\ref{TheExtended}:
	\begin{equation*}
		\sum_{j \geq 1} \mppar_{n,j} = \sum_{j \geq 1} \frac{\rho_H^{j}}{-c_H} h_j n^{\lambda_H} = \frac{H(\rho_H)}{-c_H} n^{\lambda_H} = \frac{\tau_H}{-c_H} n^{\lambda_H} = \kappa n^{\lambda_H}.
	\end{equation*}
	This link can be seen as an asymptotic avatar of the combinatorial relation $\sum_{j\geq 1} X_{n,j}=X_n$, implying 
	\begin{flalign*} && \sum_{j\geq 1} \E(X_{n,j})=\E(X_n).&& \myqedhere
          \end{flalign*}
\end{remark}

In order to prove Theorem~\ref{TheRefined}, we first need the following lemma
concerning the convergence of mixed Poisson distributions.

\begin{lem}[Factorial moments and limit laws of mixed Poisson type~\cite{KuPa2014}]
\label{COMPSCHEMElemmaMixedPoisson}
Let $(X_n)_{n\in\N}$ denote a sequence of random variables, whose factorial moments are asymptotically of \emph{mixed Poisson type}, i.e., 
they satisfy for $n \to \infty$ the asymptotic expansion
\begin{equation*}
\E(\fallfak{X_n}s)=\mppar_n^s \cdot \mu_s\cdot (1 + o(1)),\quad s\ge 1,
\end{equation*}
with $\mu_s\ge 0$ and $\mppar_n>0$. Furthermore, assume that the moment sequence $(\mu_s)_{s\in\N}$ determines a unique distribution 
$X$ satisfying Carleman's condition. Then, the following limit distribution results hold:
\begin{itemize}
\item[(i)] If $\mppar_n\to\infty$,
the random variable $\frac{X_n}{\mppar_n}$ converges in distribution, with convergence of all moments, to $X$. 

\item[(ii)] If $\mppar_n\to\mppar \in (0,\infty)$, 
the random variable $X_n$ converges in distribution, with convergence of all moments, to a mixed Poisson distributed random variable $Y\law \MPo(\mppar X)$.

\item[(iii)] If $\mppar_n\to 0$, 
the random variable $X_n$ converges to a Dirac distribution: $X_n\claw 0$.
\end{itemize}
\end{lem}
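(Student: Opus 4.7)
The common thread in all three cases is that the hypothesis
$\E(\fallfak{X_n}{s}) = \theta_n^s \mu_s(1+o(1))$ already encodes every scaling regime; the job is to convert this information into raw moments via the Stirling transform and then to invoke the method of moments. Throughout, I would use the identity
\begin{equation*}
	\E(X_n^s) = \sum_{k=0}^{s}\Stir{s}{k}\E(\fallfak{X_n}{k})
\end{equation*}
as the bridge between factorial moments (directly given) and raw moments (needed to apply Fr\'echet--Shohat).

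For case (i), I would set $Y_n:=X_n/\theta_n$ and compute, term by term,
\begin{equation*}
	\E(Y_n^s)=\theta_n^{-s}\sum_{k=0}^{s}\Stir{s}{k}\theta_n^k\mu_k(1+o(1)).
\end{equation*}
Since $\theta_n\to\infty$, only the $k=s$ summand survives in the limit, and $\E(Y_n^s)\to \Stir{s}{s}\mu_s=\mu_s$. Because $(\mu_s)$ satisfies Carleman's condition~\eqref{eq:Carleman} and thus determines the distribution of $X$ uniquely, the Fr\'echet--Shohat theorem delivers $Y_n\claw X$ together with moment convergence. For case (iii), the first factorial moment already suffices: $\E(X_n)=\theta_n\mu_1(1+o(1))\to 0$, and since $X_n$ is a nonnegative (integer-valued) counting random variable, Markov's inequality gives $\P\{X_n\ge 1\}\le\E(X_n)\to 0$, so $X_n\claw 0$.

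The main work is case (ii), and this is the step I expect to be the delicate one. Here the hypothesis yields $\E(\fallfak{X_n}{s})\to\rho^s\mu_s$, which are precisely the factorial moments of $Y\law\MPo(\rho X)$ by Proposition~\ref{MOMSEQthe1}. Expanding in Stirling numbers as above gives
\begin{equation*}
	\E(X_n^s)\longrightarrow \sum_{k=0}^{s}\Stir{s}{k}\rho^k\mu_k=\E(Y^s).
\end{equation*}
To conclude convergence in distribution by the method of moments I must check that $Y$ itself is determined by its moment sequence. The cleanest route is to notice that the Stirling numbers satisfy $\sum_{k\le s}\Stir{s}{k}\rho^k\mu_k\le C^s\,s!\,\mu_s$ for some constant $C=C(\rho)$, so that $(\E(Y^s))^{-1/(2s)}\gtrsim c\cdot \mu_s^{-1/(2s)}/s^{1/2}$; combining this with the Carleman divergence of $\sum_s\mu_s^{-1/(2s)}$ (which holds for the moments~\eqref{COMPSCHEMEMoms} as verified in the proof of Theorem~\ref{TheExtended}) shows that $(\E(Y^s))$ also satisfies Carleman's criterion. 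Alternatively---and this is the route I would ultimately prefer because it is purely combinatorial---one can use the explicit expression for the probability mass function in Proposition~\ref{MOMSEQthe1}, namely $\P\{Y=\ell\}=\sum_{s\ge\ell}(-1)^{s-\ell}\binom{s}{\ell}\mu_s\rho^s/s!$, and transfer convergence of factorial moments to termwise convergence of the probability mass function, after which Scheff\'e's lemma (or a direct inclusion--exclusion argument) upgrades pointwise convergence of probabilities on $\N_0$ to convergence in distribution with convergence of all moments.
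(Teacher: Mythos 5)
The paper does not supply its own proof of this lemma: it is cited from \cite{KuPa2014}, and the only argument given in the text is a one-line remark establishing case (iii) via Markov's inequality --- which matches your treatment of that case. Your case (i) is also fine: the Stirling transform argument, with the observation that $\Stir{s}{0}=0$ so the $k=0$ term never interferes, is exactly the standard route.

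Case (ii) is indeed the delicate step, and your first route has a genuine gap. From $\E(Y^s)\le C^s\,s!\,\mu_s$ (which, incidentally, already tacitly assumes $\mu_k$ eventually nondecreasing so that the small-$k$ summands of $\sum_k\Stir{s}{k}\rho^k\mu_k$ do not dominate) you get $\E(Y^s)^{-1/(2s)}\gtrsim \mu_s^{-1/(2s)}/\sqrt{s}$, but divergence of $\sum_s\mu_s^{-1/(2s)}$ is \emph{not} preserved under division by $\sqrt{s}$: if $\mu_s^{-1/(2s)}\sim 1/s$ the original sum diverges while $\sum_s 1/s^{3/2}$ converges. So this bound does not establish Carleman for $Y$ under the hypothesis of the lemma; it does so only for moment sequences with the particular growth $\mu_s\sim C^s\Gamma((1-\lambda_H)s+\mathrm{const})$, $0<\lambda_H<1$, arising in~\eqref{COMPSCHEMEMoms}. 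The cleaner observation, which sidesteps the bookkeeping entirely, is that this sub-factorial growth makes $\E(e^{zX})$ finite in a neighbourhood of $0$; then $\E(e^{tY})=\E\big(e^{\rho X(e^t-1)}\big)$ is finite for small $t$, so $Y$ is moment-determinate by the much simpler exponential-moment criterion. Your Scheff\'e alternative relies on the same MGF hypothesis (it is precisely the standing assumption in Proposition~\ref{MOMSEQthe1}) and, as written, also needs a justification for swapping the $n\to\infty$ limit with the alternating infinite sum $\sum_{s\ge\ell}(-1)^{s-\ell}\binom{s}{\ell}\E(\fallfak{X_n}{s})/s!$, which is not automatic from pointwise convergence of factorial moments alone; a dominated-convergence argument using the uniform-in-$n$ bound on $\E(\fallfak{X_n}{s})$ would close that hole.
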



\begin{proof}[Proof of Theorem~\ref{TheRefined}]
The factorial moments $\E(\fallfak{X_{n,j}}s)=\sum_{k\ge 0}\P\{X_{n,j}=k\}\fallfak{k}s$ of $X_{n,j}$ are obtained from $F(z,v)$ by repeated differentiation and evaluation at $s=1$:
\begin{equation*}
\E(\fallfak{X_{n,j}}s)= \frac{[z^n] \partial_v^s(F)(z,1)}{[z^n] F(z,1)}=h_j^s\frac{[z^{n-j\cdot s}]G^{(s)}\big(H(z)\big)M(z)}{f_n}.
\end{equation*}
We already know the asymptotics of $f_n$ from~\eqref{ExpansionF2}. 
For fixed $j$ we can proceed by extraction of coefficients, 
while for $j=j(n)$ tending to infinity, the asymptotic expansion of $h_j$ 
follows by singularity analysis applied to $H(z)$ (see Equation~\eqref{EqSA1}):
\begin{equation}
h_j=\frac{c_H}{\rho_H^j}\cdot \frac{j^{-\lambda_H-1}}{\Gamma(-\lambda_H)}\cdot \left(1+o(1)\right).
\label{COMPSCHEMEExpanHj}
\end{equation}
What is more, one has
\begin{equation}
G^{(s)}\big(H(z)\big)M(z) \sim
(-1)^{s} c_M c_{G}\rho_G^{-\lambda_G}\fallfak{\lambda_G}{s}(-c_H)^{\lambda_G-s}\Big(1-\frac{z}{\rho_H}\Big)^{\lambda_H\lambda_{G}-s\lambda_H+\lambdaM}.
\label{COMPSCHEMEExpanGAbleit}
\end{equation}
This implies that 
$X_{n,j}$ has factorial moments of mixed Poisson type:
\begin{equation*}
\E(\fallfak{X_{n,j}}s)\sim h_j^s (-c_H)^{-s}\rho_H^{js} \cdot \mu_s \cdot n^{s\lambda_H}
\text{\quad with \quad }
\mu_s:=\frac{\Gamma(s-\lambda_G)\Gamma(-\lambda_{G}\lambda_H-\lambdaM)}{\Gamma(-\lambda_G)\Gamma(-\lambda_H\lambda_{G}+s\lambda_H-\lambdaM)}.
\end{equation*}
We already observed that the moment sequence $(\mu_s)_{s\in\N}$ determines a unique distribution, 
by Carleman's criterion~\eqref{eq:Carleman}.
Thus, the limit laws follow by using Lemma~\ref{COMPSCHEMElemmaMixedPoisson}. 

Finally, the critical growth range is obtained via
the closed-form expression for~$\mppar_{n,j}$ (in which one inserts the expansion~\eqref{COMPSCHEMEExpanHj}):
Indeed, $\mppar_{n,j}$ is of growth order
$\frac{n^{\lambda_H}}{j^{1+\lambda_H}\Gamma(-\lambda_H)}$
and converges to a nonzero constant if and only if $j(n)\sim r\cdot n^{\frac{\lambda_H}{1+\lambda_H}}$, 
therefore the critical growth range is $\Theta(n^{\frac{\lambda_H}{1+\lambda_H}})$. 
Collecting all contributions from $\mppar_{n,j}$ (for $j=o(n)$ tending to infinity)
gives the constant~$\mppar$. The Dirac case is now finally obtained by an additional analysis
of the expected value in the remaining range 
$j\gg n^{ \frac{\lambda_H}{1+\lambda_H}}$.
There, we directly obtain $\E(X_{n,j})\to 0$, 
which proves the stated result. 
\end{proof}

\begin{remark}[A probabilistic approach]
Note that our Theorem~\ref{TheRefined} was recently revisited by Stufler in~\cite[Section~3.4]{Stufler2022} with a probabilistic point of view.
He also describes a nice link with a point process giving access to the distribution of the largest components; see~\cite[Proposition~3.16]{Stufler2022}.
Compare also with the results of Gourdon~\cite{Gourdon1998}. 
\end{remark}

Next we turn to the dependence between the number of $\HH$-components of size~$j_1$ and $j_2$, determining the covariance and the correlation coefficient. 

\begin{theorem}
\label{TheRefinedCov}
In a size-refined pure critical composition scheme, the covariance of
the random variables
$X_{n,j_1}$ and $X_{n,j_2}$, counting the
number of $\HH$-components of size $j_1$ and $j_2$
(with $j_1,j_2=o(n)$), 
satisfies
\begin{equation}\label{covariance}
\cov(X_{n,j_1},X_{n,j_2})\sim \mppar_{n,j_1}\mppar_{n,j_2}\cdot \V(X),
\end{equation}
where $\mppar_{n,j}=\frac{\rho_H^{j}}{-c_H} h_j n^{\lambda_H} >0$ and $X \law \BML(\alpha, \beta, \gamma)$ denotes the three-parameter Mittag-Leffler distribution from Theorem~\ref{TheExtended}. 
Furthermore, the correlation coefficient between 
$X_{n,j_1}$ and $X_{n,j_2}$ satisfies 
\begin{align*}
\rho(X_{n,j_1},X_{n,j_2}) \sim 
\frac{1}{\sqrt{1+\frac{\E(X)}{\mppar_1}}} \frac{1}{\sqrt{1+\frac{\E(X)}{\mppar_2}}},
\end{align*}
where, for $k=1,2$, one has $\mppar_k := \lim_{n} \mppar_{n,{j_k}}$ and $\sqrt{1+\frac{\E(X)}{\mppar_k}}\sim 1$
if $j_k \ll n^{\frac{\lambda_H}{1+\lambda_H}}$.
\end{theorem}
 
\begin{remark}
\label{RemarkRefinement1}
We observe that for small $j_1,j_2$ (e.g., if $j_1,j_2 =O(1)$), the random variables are asymptotically highly correlated: $\rho(X_{n,j_1},X_{n,j_2})\sim 1$.
\end{remark}
\pagebreak

\begin{proof}[Proof of Theorem~\ref{TheRefinedCov}]
The combinatorial scheme
\begin{equation*}
\mathcal{F}=\mathcal{G}\big(\mathcal{H}_{\neq j_1,j_2} + v_1\mathcal{H}_{= j_1}+ v_2\mathcal{H}_{= j_2}\big)\times \mathcal{M}
\end{equation*}
(where one takes distinct sizes $1\le j_1<j_2$) directly translates into 
\begin{equation*}
F(z;v_1,v_2)=G\big(H(z) - (1-v_1)h_{j_1} z^{j_1}- (1-v_2)h_{j_2} z^{j_2}\big) M(z).
\end{equation*}
Accordingly, the random variables $X_{n,j_1}$ and $X_{n,j_2}$ 
have the joint distribution 
\begin{equation*}
\P\{X_{n,j_1}=k_1,X_{n,j_2}=k_2\}= \frac{[z^n v_1^{k_1}v_2^{k_2}]F(z;v_1,v_2)}{[z^n]F(z;1,1)}.
\end{equation*}

We already know the asymptotics of $f_n=[z^n]F(z;1,1)$, given in~\eqref{ExpansionF2}.
We get by differentiation and evaluation at $v_1=v_2=1$
\begin{equation*}
\E(X_{n,j_1}X_{n,j_2})=\frac{[z^n]\partial_{v_1}\partial_{v_2}(F)(z;1,1)}{[z^n]F(z;1,1)} = h_{j_1}h_{j_2}\frac{[z^{n-j_1-j_2}]G''(H(z))M(z)}{f_n}.
\end{equation*}
The asymptotics of $h_{j_1}$ and $h_{j_2}$ are given in~\eqref{COMPSCHEMEExpanHj}.
The singular expansion of $G''(H(z))M(z)$ is a special case of~\eqref{COMPSCHEMEExpanGAbleit}, 
so we obtain for $j_1,j_2=o(n)$:
\begin{equation*}
\E(X_{n,j_1}X_{n,j_2})\sim h_{j_1}h_{j_2}c_H^{-2}\rho_H^{j_1+j_2}\cdot\E(X^2)\cdot n^{2\lambda_H}.
\end{equation*}
Hence, using the explicit form of $\E(X)$ in~\eqref{MomentBetaStable}, we obtain for the covariance:
\begin{align*}
\cov(X_{n,j_1},X_{n,j_2})&= \E(X_{n,j_1}X_{n,j_2})-
\E(X_{n,j_1})\E(X_{n,j_2})\\
&\sim\frac{h_{j_1}h_{j_2}\rho_H^{j_1+j_2}}{c_H^2}\big(\E(X^2)-\E(X)^2\big)n^{2\lambda_H}.
\end{align*}
 By $\V(X)=\E(X^2)-\E(X)^2$ and the definition of
$\mppar_{n,j}$
, this implies~\eqref{covariance}.
For the correlation coefficient, we observe that Theorem~\ref{TheRefined} together with Lemma~\ref{COMPSCHEMElemmaMixedPoisson} implies
$\V(X_{n,j})\sim \mppar_{n,j}^2\V(X)+ \mppar_{n,j}\E(X)$.
Collecting all contributions, this implies that
\begin{align*}
\rho(X_{n,j_1},X_{n,j_2}) 
&= \frac{\cov(X_{n,j_1},X_{n,j_2})}{\sqrt{\V(X_{n,j_1})} \sqrt{\V(X_{n,j_2})}} 
\sim \frac{1}{ \sqrt{1+\frac{\E(X)}{\mppar_{n,j_1}}}\sqrt{1+\frac{\E(X)}{\mppar_{n,j_2}}} }. 
\end{align*}
Taking the limit of this expression for $n$ going to infinity concludes the proof.
\end{proof}

Before to present in Section~\ref{sec:outlook} a multivariate generalization of these results,
we now list several applications to a variety of combinatorial structures.

\pagebreak

\section{Applications and examples} \label{sec:examples}
Our main results, Theorems~\ref{TheExtended} and~\ref{TheRefined}, 
can be readily applied to the problems considered by Drmota and Soria~\cite{DS97}, Flajolet and Sedgewick~\cite{FlaSe2009}, Dumas, Flajolet and Puyhaubert~\cite{FlaDumPuy2006}, Janson~\cite{Jan2006b}, Meir and Moon~\cite{MeiMoo1970}, 
Panholzer and Seitz~\cite{PanholzerSeitz2011} (see also~\cite{KuPa2014} for many additional pointers to the literature)
once the required singular expansions of the involved generating functions are established. This includes returns to record-subtrees in Cayley trees, edge-cutting in Cayley trees, returns to zero in Dyck paths, 
cyclic points and trees in graphs of random mappings, all leading to (mixed Poisson) Rayleigh laws, as well as block sizes in $k$-Stirling permutations. 

In the following we discuss several new results for the distribution of  different statistics such as returns to zero and sign changes in walks with arbitrary steps, 
the number of subtrees satisfying some constraint in different fundamental families of trees,
as well as the table sizes in the Chinese restaurant process,
and the evolution of the number of balls in triangular urn models.
These examples illustrate that composition schemes 
$\mathcal F= \mathcal G(\mathcal H) \times \mathcal M$ lead universally 
to three-parameter Mittag-Leffler distributions (Definition~\ref{def:BetaMittagLeffler}),
which simplify into two-parameter Mittag-Leffler distributions
if $M(z)$ has a nonnegative singular exponent.

\subsection{Core size of supertrees and a confluent example}
\label{sec:Supertrees}

Let $\Ce$ denote the family of plane trees (i.e., trees with all arities allowed and embedded into the plane) defined by
\begin{equation*}
\Ce = \Z\times\Seq(\Ce),\quad \text{ which translates to } \quad C(z)=\frac{1-\sqrt{1-4z}}{2}.
\end{equation*}
Then, following~\cite[pp.~412--414, 714]{FlaSe2009},
we consider supertrees, or ``trees of trees'', defined by
\begin{equation*}
\mathcal{K}=\Ce\Big((\Z_\text{red}+\Z_\text{blue})\times \Ce\Big),\quad \text{ which translates to } \quad K(z)=C\big(2zC(z)\big).
\end{equation*}
Note that this is a critical scheme as one has $\rho_C=1/4$ and $\tau_C=C(\rho_C)=1/2$. 

The tree family~$\mathcal{K}$ corresponds to trees where onto each node we graft a red or blue tree; 
one can also draw them like done in Figure~\ref{fig:supertrees}.
By Lagrange inversion, these supertrees~$\mathcal{K}$ are thus enumerated by a neat combinatorial sum:
\begin{equation*}
K_n=\sum_{k=1}^{\lfloor n/2 \rfloor}\frac{2^k}{n-k}\binom{2k-2}{k-1}\binom{2n-3k-1}{n-k-1},
\end{equation*}
thus the sequence $K_n$ for $n\geq 2$ starts like $2,2,8,18,64,188,656,2154,\dots$, constituting sequence \href{https://oeis.org/A168506}{A168506} in the \textsc{Oeis}
(On-Line Encyclopedia of Integer Sequence: \href{https://oeis.org/}{https://oeis.org}). 

\begin{figure}[b]
\begin{center}
\includegraphics[height=4.95cm]{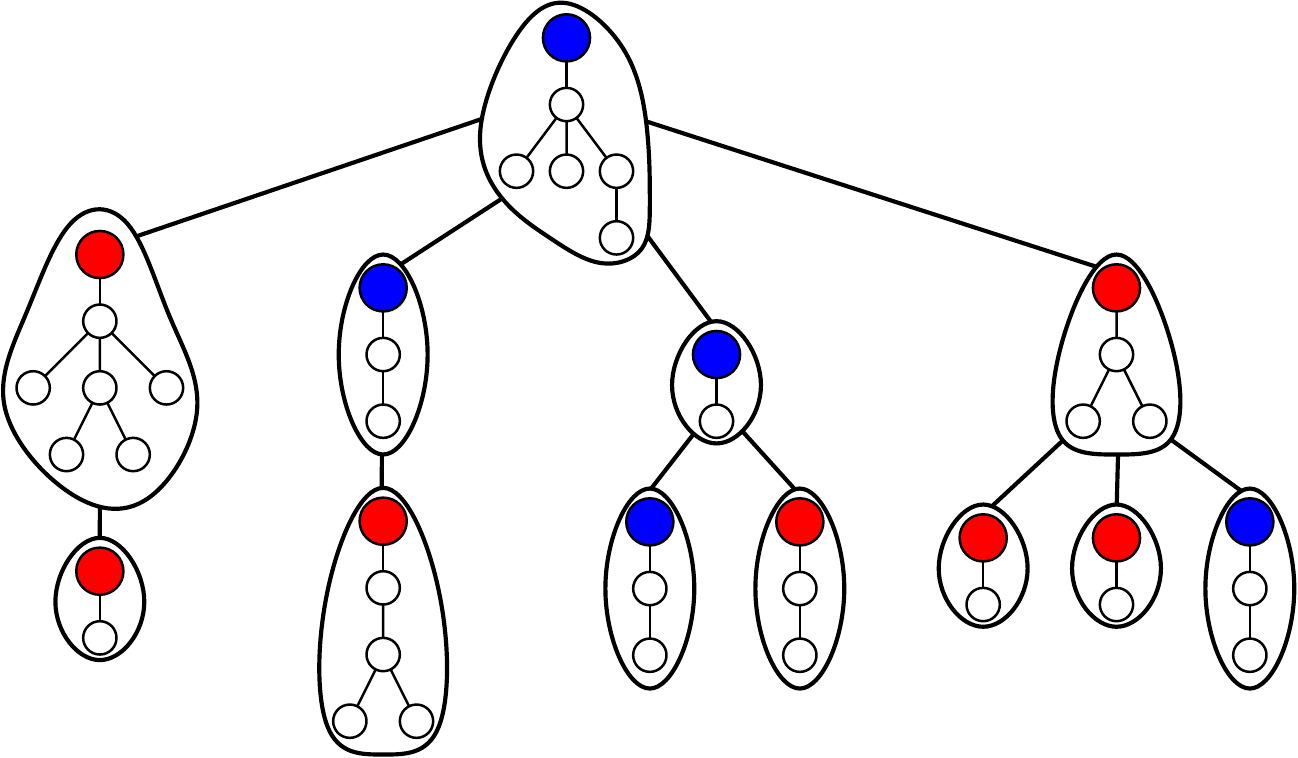} 
\end{center}
\caption{A bicoloured supertree is a ``tree of trees'': Each node (here drawn in a potato shape) of some initial plane tree is replaced by a red or a blue node to which one attaches another plane tree.
}
\label{fig:supertrees}
\end{figure}

\pagebreak
By the Laplace method or by singularity analysis, this directly leads to the asymptotic expansion
$
K_n\sim\frac{4^n}{8\Gamma(3/4)n^{5/4}}.
$
(See also DeVries~\cite{DeVries2011,PemantleWilson2013} for an alternative approach to this expansion via multivariate analysis.)
This asymptotic behaviour is noteworthy, because one sees here an unusual occurrence of the exponent $-\frac54$, 
while most tree-like structures in combinatorics usually involve the exponent $-\frac32$. 
In fact, one could similarly define super-supertrees, super-super-supertrees, and so on, by further iterations of the critical scheme: 
$\Ce_{k+1} = \Ce_{k}\Big( 2 \Z \Ce_k\Big)$ with $\Ce_0=\Ce$. 
This gives an interesting family of combinatorial structures
whose asymptotic enumeration involves a dyadic exponent $-1-1/2^{k+1}$;
see~\cite{BanderierDrmota2015} for a complete characterization 
of the possible singular exponents for \textit{$\N$-algebraic functions} (i.e., generating functions of any structure which can be generated by a context-free language).

With respect to supertrees, the critical scheme is 
\begin{equation*}\label{supertreesGHK}
K(z)=G(H(z)),\quad G(z)=C(z), \quad H(z)=2zC(z),
\end{equation*}
where $H(z)$ has the following Puiseux expansion at $z\sim\frac14$:
\begin{equation*}
H(z)\sim \frac14 - \frac14\sqrt{1-4z}.
\end{equation*}
Now, we can study the core size~$X_n$ via the bivariate generating function
\begin{equation*}
K(z,u)=C\big(u\cdot 2z C(z)\big),
\end{equation*}
as well as the number of $\mathcal{H}$-components of size $j$, as captured by 
\begin{equation*}
K(z,v)=C\big(2z C(z) + (v-1)2c_{j-1}z^j\big).
\end{equation*}
We can then apply our main Theorems~\ref{TheExtended} and~\ref{TheRefined} 
(with $\lambda_G=\lambda_H=1/2$, $\tau_H=\frac14$, $c_H=-\frac14$, and $\kappa=1$). This directly gives
the following corollaries.
\begin{coroll}\label{cor6.1}
The core size~$X_n$ in supertrees of size $n$ has factorial moments 
\begin{equation*}
\E(\fallfak{X_n}s)\sim n^{s/2}\cdot \mu_s,\quad
\quad \mu_s=\frac{\Gamma(s-\frac12)\Gamma(-\frac14)}{\Gamma(-\frac12)\Gamma(\frac{s}2-\frac14)}.
\end{equation*}
The scaled random variable $X_n/n^{1/2}$ converges in distribution with convergence of all moments to
a $c=-1/2$ moment-tilted stable distribution (a tilted Mittag-Leffler distribution of index~$1/2$):
\begin{equation*} 
\frac{X_n}{n^{1/2}}\cmom X, \qquad \text{ with } \qquad 
X\law \tilt_{-\frac{1}{2}}\left(\ML\left(\frac{1}{2}\right)\right)
\law \ML\left(\frac{1}{2},-\frac{1}{4}\right).
\end{equation*}
Moreover, we have the local limit theorem
\begin{equation*}
\P\{X_n=x\cdot n^{1/2}\}\sim \frac{1}{n^{1/2}}\cdot f_X(x),
\end{equation*}
with $f_X(x)$ denoting the density of the random variable $X$.
\end{coroll}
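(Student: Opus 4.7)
The plan is to recognize the supertree composition $K(z)=C\bigl(2zC(z)\bigr)$ as an instance of the extended critical composition scheme $F(z)=G\bigl(H(z)\bigr)M(z)$ with $G(z)=C(z)$, $H(z)=2zC(z)$ and $M(z)\equiv 1$ (so $\lambdaM=0$), and then apply Theorem~\ref{TheExtended} together with Theorem~\ref{CoExtended1} directly. Criticality was already verified in the setup of the example: $C$ has a square-root singularity at $\rho_G=1/4$ with $C(1/4)=1/2$, and $H(1/4)=2\cdot(1/4)\cdot(1/2)=1/4=\rho_G$, so $\rho_H=\rho_G=1/4$.

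The next step is to extract the singular data. From $C(z)=\tfrac12-\tfrac12\sqrt{1-4z}$ one reads off $\tau_G=1/2$, $\lambda_G=1/2$, $c_G=-1/2$. Expanding $H(z)=z-z\sqrt{1-4z}$ around $z=1/4$ (for instance via the substitution $u=\sqrt{1-4z}$, which gives $H=(1-u)^2(1+u)/4$) yields
\[
H(z)=\tfrac{1}{4}-\tfrac{1}{4}\sqrt{1-4z}+O(1-4z),
\]
so that $\tau_H=1/4$, $\lambda_H=1/2$, $c_H=-1/4$. The corollary's normalization $\kappa=\tau_H/(-c_H)=1$ follows immediately.

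Feeding these parameters into Theorem~\ref{TheExtended} with $\lambdaM=0$ instantly delivers the factorial moments
\[
\E(\fallfak{X_n}{s})\sim n^{s/2}\cdot\frac{\Gamma(s-1/2)\,\Gamma(-1/4)}{\Gamma(-1/2)\,\Gamma(s/2-1/4)},
\]
together with convergence in distribution of $X_n/n^{1/2}$ with convergence of all moments, and the local limit theorem $\P\{X_n=x\cdot n^{1/2}\}\sim n^{-1/2}f_X(x)$, where $f_X$ is the density produced by the explicit series in Theorem~\ref{TheExtended} specialised at $\lambda_G=\lambda_H=1/2$, $\lambdaM=0$. Theorem~\ref{CoExtended1} then identifies this limit law as $\tilt_{-1/2}(M_{1/2})$; note that the admissibility condition $c>-\alpha/\beta$ from Example~\ref{ex:positivestable} becomes $-1/2>-1$, which is satisfied.

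The only genuinely delicate point, and hence the main (mild) obstacle, is a boundary check for Definition~\ref{def:small}: here $\lambda_G=1/2>0$ and $\lambdaM=0$, so $\lambda_G\lambda_H=1/4>\lambdaM$. In the expansion~\eqref{ExpansionF-c}, one therefore has a constant term $c_M\tau_G=1/2$ alongside the singular contribution of exponent $\lambda_G\lambda_H=1/4$. The verification is that this constant term is innocuous, since it contributes nothing to $[z^n]K(z)$ for $n\geq 1$, so the dominant singular behaviour of $K(z)$ is indeed governed by the exponent $1/4$; this is consistent with the asymptotic $K_n\sim 4^n/\bigl(8\Gamma(3/4)\,n^{5/4}\bigr)$ recorded in the excerpt, and the argument of Theorem~\ref{TheExtended} applies verbatim.
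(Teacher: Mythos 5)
Your proposal is correct and follows exactly the paper's route: identify $G=C$, $H=2zC$, $M\equiv 1$, read off $\tau_H=\tfrac14$, $c_H=-\tfrac14$, $\kappa=1$, $\lambda_G=\lambda_H=\tfrac12$, and invoke Theorems~\ref{TheExtended} and~\ref{CoExtended1}. Your closing check is the one place you go beyond the paper's one-line derivation, and it is well taken: since $\lambda_G=\tfrac12>0$ and $\lambdaM=0$, the literal condition $\lambda_G\lambda_H<\lambda_M$ of Definition~\ref{def:small} fails, but because $M\equiv 1$ contributes only the constant $c_M\tau_G$ (no competing singular term, and $[z^n]M(z)=0$ for $n\ge 1$), the dominant singular exponent of $K$ is indeed $\lambda_G\lambda_H=\tfrac14$ and the argument of Theorem~\ref{TheExtended} applies verbatim, exactly as you argue.
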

Note that by Legendre's duplication formula one has
$
\mu_s
=2^s \Gamma(\frac{s}2+\frac14)/\Gamma(\frac14),
$
so the random variable $X$ can also be seen as equal in law to 
the chi distribution $\chi(\frac12)$ of parameter~$\frac12$, 
which is itself a generalized gamma distribution~\cite{BanderierMarchalWallner2020}.
\pagebreak

\begin{coroll}
The number of coloured trees of size $j-1$ in supertrees of size $n$ has factorial moments of mixed Poisson type given by
\begin{equation*}\label{cor6.2}
	\E(\fallfak{X_{n,j}}s)=\mppar_{n,j}^s\cdot \mu_s(1+o(1)),
\end{equation*}
with 
\begin{equation*}
\mppar_{n,j}=2\cdot (\frac14)^{j-1} c_{j-1}\cdot n^{1/2}
\end{equation*}
 and mixing distribution $X=\chi(\frac12)$ with $\E(X^s)=\mu_s$.

\smallskip
Furthermore, the random variable $X_{n,j}$ possesses the three successive asymptotic régimes
of Theorem~\ref{TheRefined}, with a phase transition at $j=\Theta(n^{1/3})$.
\end{coroll}

\smallskip

We note in passing that a very similar result holds for the family of binary supertrees $\mathcal{S}$, occurring in Bousquet-M\'elou's study~\cite{Bou2006} of the integrated super-Brownian excursion. 
This family is defined in terms of the family of complete binary trees $\mathcal{B}$:
\begin{equation*}
\mathcal{S}=\mathcal{B}(\Z\times \mathcal{B}),\qquad \mathcal{B}=\Z+\Z\times\mathcal{B}\times\mathcal{B},
\end{equation*}
with initial values of $S_n$ given by $1,1,3,8,25,80,267,911,\dots$, constituting sequence \href{https://oeis.org/A101490}{A101490} in the \textsc{Oeis}. 
These functional equations lead to $B(z)=\frac{1-\sqrt{1-4z^2}}{2z}$, 
and to a critical scheme leading to limit laws similar to the ones of supertrees in Corollaries~\ref{cor6.1} and~\ref{cor6.2}.

\medskip

{\bf A confluent example.} Next, we consider pairs of supertrees in which we mark the number of $\mathcal H$-trees in the first part of the pair. 
This translates to the scheme $F(z,u)=G(u H(z)) K(z)$ (with $G$, $H$, and~$K$ defined as in~\eqref{supertreesGHK}), which is confluent because we have
$\lambda_H\lambda_G=\lambda_M=\frac{1}{4}$. 
This case is interesting as $\P\{X_n=k\}$ (the probability that a pair of supertrees has $k$ $\mathcal H$-trees in its first part) 
converges to the sum of a discrete law and a continuous law. 
More precisely, as given by Theorem~\ref{TheExtendedConfluent}, and illustrated in Figure~\ref{fig:supertrees_confluent}, the limit is a linear combination of the Boltzmann distribution $\mathcal{B}_C(\frac{1}{4})$\footnote{$C$ denoting the Catalan generating function, it could be natural to call this Boltzmann distribution 
$\mathcal{B}_C(\frac{1}{4})$ the {\em Catalan distribution}, but this name is already used for some other distributions popping up in random matrix theory, 
and having moments related to the Catalan numbers, like the Marchenko--Pastur distribution.}
and the two-parameter Mittag-Leffler distribution~$\ML\left(\frac{1}{2},-\frac{1}{4}\right)$.

\begin{figure}[!hb]
\begin{center}\hspace{-2mm}
\includegraphics[width=0.32\textwidth]{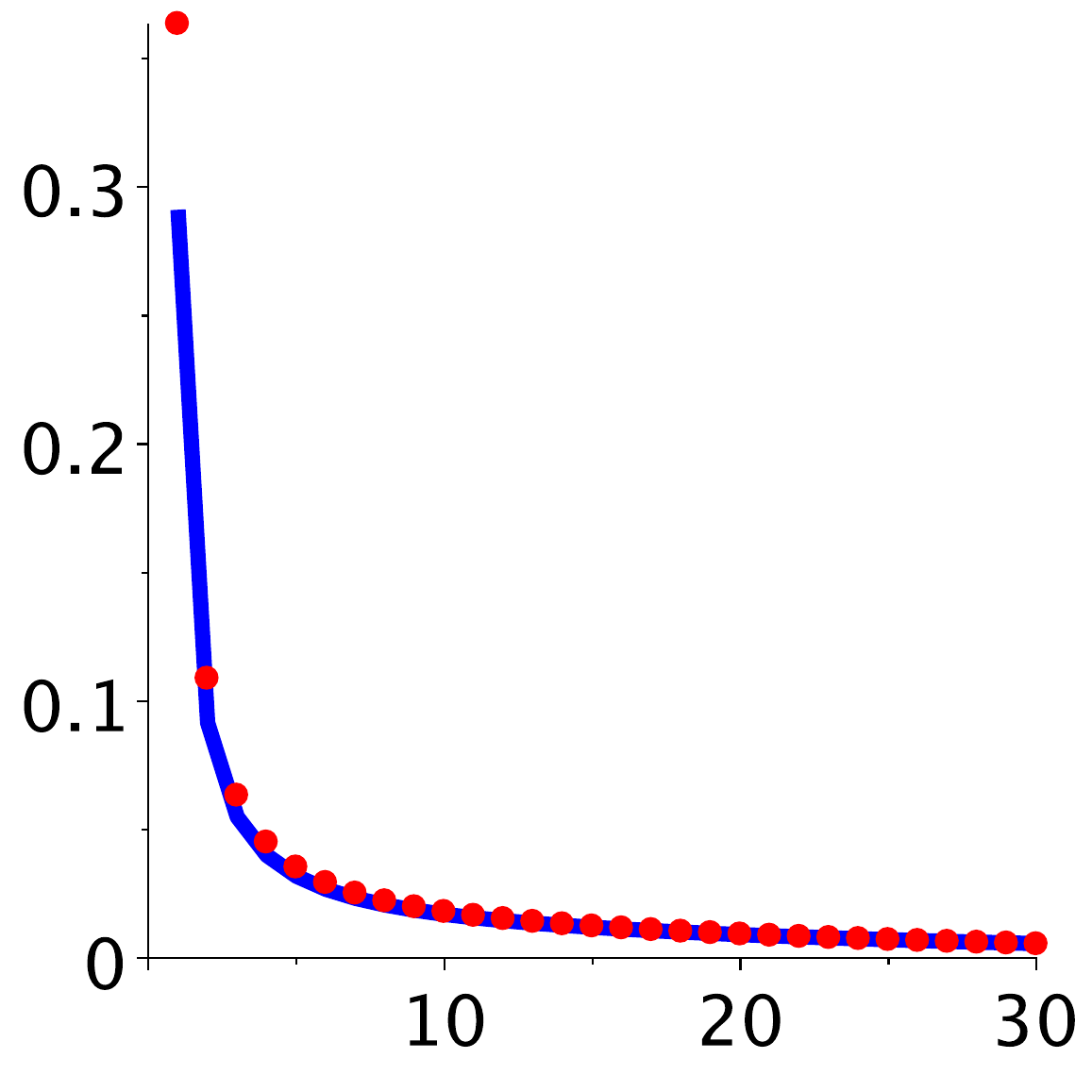} 
\ \ 
\includegraphics[width=0.32\textwidth]{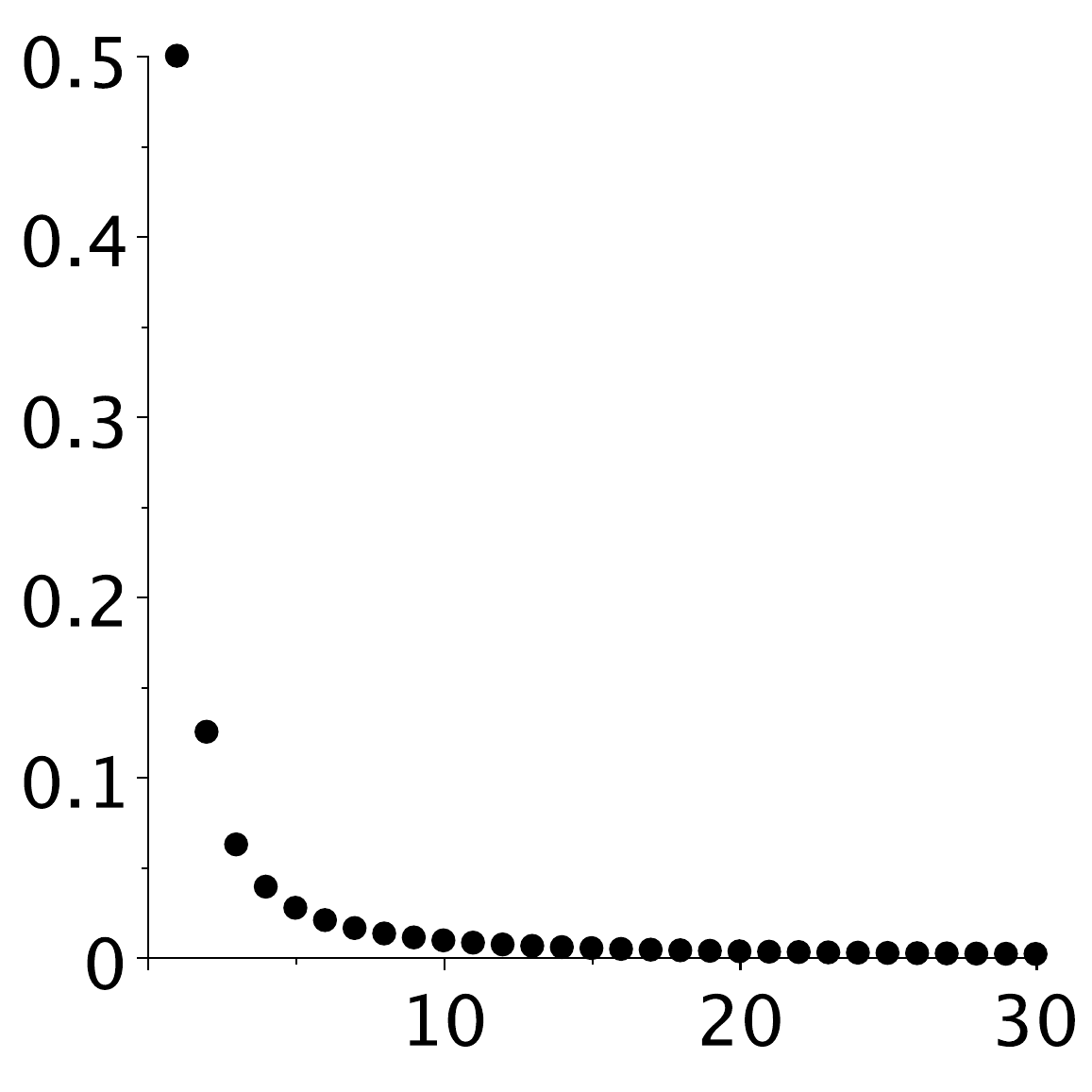} 
\ \ 
\includegraphics[width=0.32\textwidth]{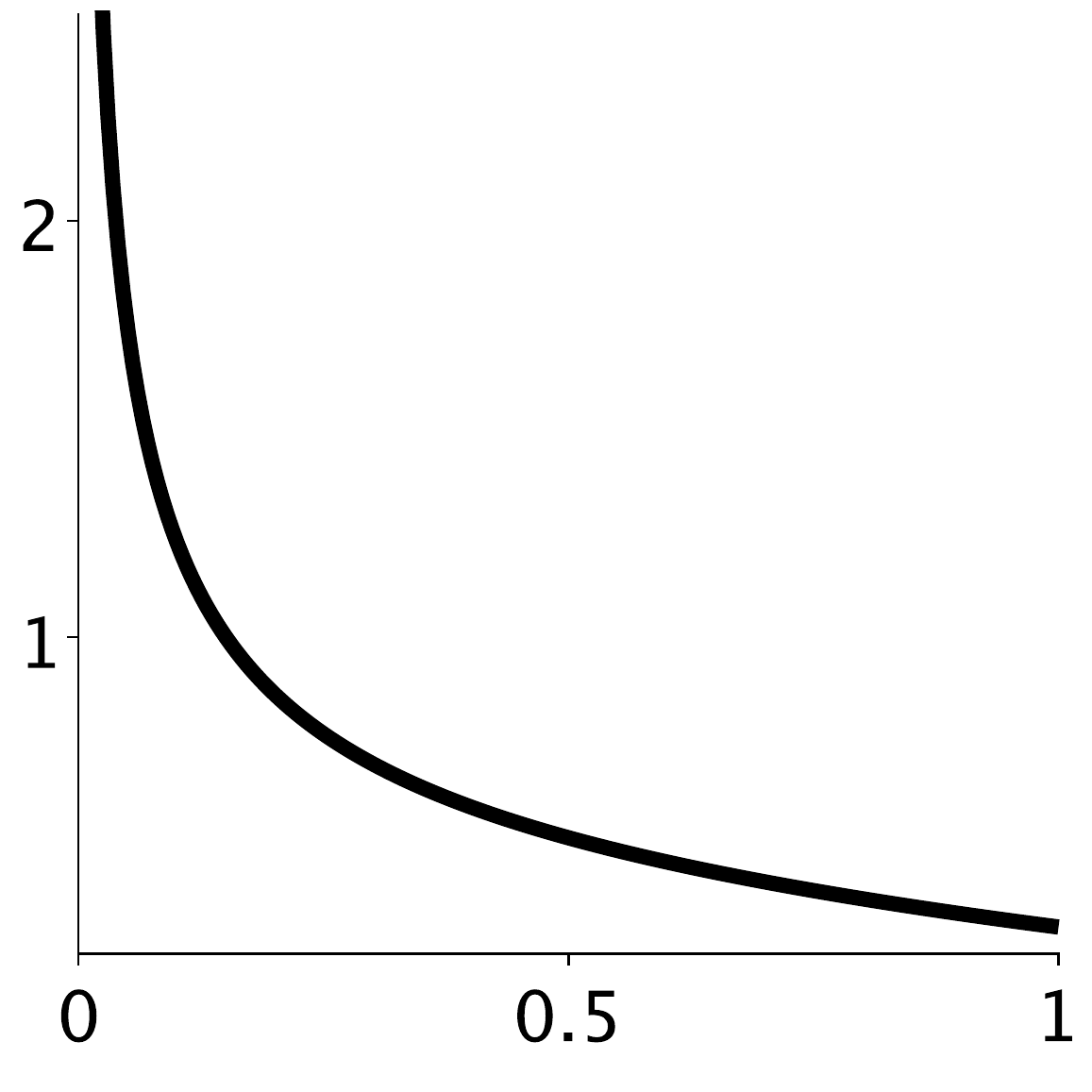} 
\end{center}

\caption{The empirical distribution $\P\{X_n=k\}$ (drawn with red dots), 
and its theoretical limiting distribution (in blue).
This blue curve is a linear combination of 
a discrete and a continuous distribution (the middle and right curves drawn in black): 
$\frac{1}{2}\,\mathcal{B}_C\left(\frac{1}{4}\right) + \frac{1}{2} \,\sqrt n \ML\left(\frac{1}{2},-\frac{1}{4}\right)$.
The blue theoretical limit curve agrees almost perfectly with the red empirical distribution even for small values of $n$ (here, $n=500$).
} 
\label{fig:supertrees_confluent}
\end{figure}

\newpage
\subsection{Root degree and branching structure in bilabelled increasing trees\label{ExBilabelled}}
Bilabelled increasing trees are a natural generalization of increasing trees~\cite{BucketPanKu2009} 
where every node is assigned two labels instead of just one. 
General families of bilabelled trees are in bijection with increasing diamonds, which are a natural type of directed acyclic graphs; see~\cite{PK2020+} for the general statement and Figure~\ref{fig:increasingdiamond} for a concrete example.
Increasing diamonds model partial orders and their linear extensions, as well as
computational processes and their executions in parallel computing~\cite{BodiniDienFontaineGenetriniHwang2016}.
They possess nice combinatorial properties and are enumerated by variants of hook-length formulas~\cite{KubPan2012,KubPan2016,PK2020+}.

\begin{figure}[t]
\begin{center}
\includegraphics[width=0.55\textwidth]{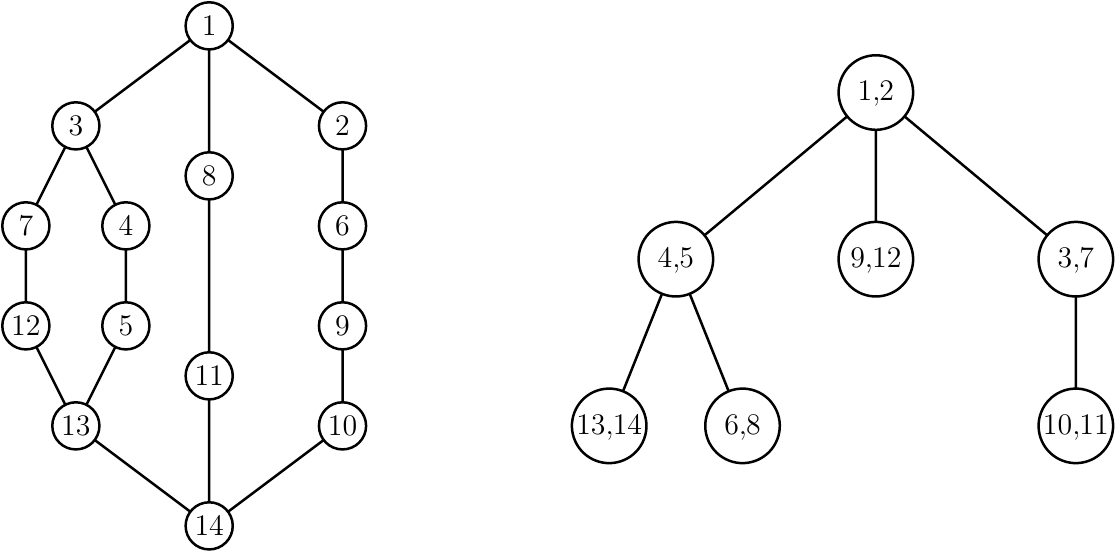} 
\end{center}
\caption{An increasing diamond and the bijectively equivalent bilabelled increasing plane-oriented recursive tree.}
\label{fig:increasingdiamond}
\end{figure}

Given a \emph{degree-weight sequence} $(\varphi_{j})_{j \ge 0}$, the corresponding \emph{degree-weight generating function} is defined as $\varphi(t) = \sum_{j \ge 0} \varphi_{j} t^{j}$.
The family $\mathcal{T}$ of 
bilabelled increasing trees can be described by the following symbolic equation
$ \mathcal{T} = 
 \mathcal{Z}^{\Box} \ast \left(\mathcal{Z}^{\Box} \ast \varphi\big(
 {\mathcal{T}}\big)\right),$
where $\mathcal{Z}$ denotes single unilabelled nodes,
$\mathcal{A}^{\Box} \ast \mathcal{B}$ denotes the \emph{boxed product}
 (i.e., the smallest label is constrained to lie in the $\mathcal{A}$ component) of the combinatorial classes $\mathcal{A}$ and~$\mathcal{B}$, 
 and $\varphi(\mathcal{A}) = \varphi_{0} \cdot \{\epsilon\} + \varphi_{1} \cdot \mathcal{A} + \varphi_{2} \cdot \mathcal{A}^{2} + \dots$ denotes the class containing all \emph{weighted finite labelled sequences} of objects of $\mathcal{A}$ (i.e., each sequence of length $k$ is weighted by $\varphi_{k}$; $\epsilon$ denotes the \emph{neutral object} of size $0$); see \cite{FlaSe2009}.
Note that increasing diamonds are associated with~$\varphi(t) = \frac{1}{1-t}$.
For the exponential generating function 
$T(z) = \sum_{n \ge 1} T_{n} \frac{z^{n}}{n!}$
 where $n$ counts the number of labels, the above construction translates into
\begin{equation}
T''(z)=\varphi(T(z)),\quad T(0)=0,\quad T'(0)=0. \label{eq:inctree}
\end{equation}

We now focus on the case of $3$-bundled bilabelled increasing trees; see Figure~\ref{fig:increasingtree}. 

\begin{figure}[b]
\begin{center}
\includegraphics[width=0.95\textwidth]{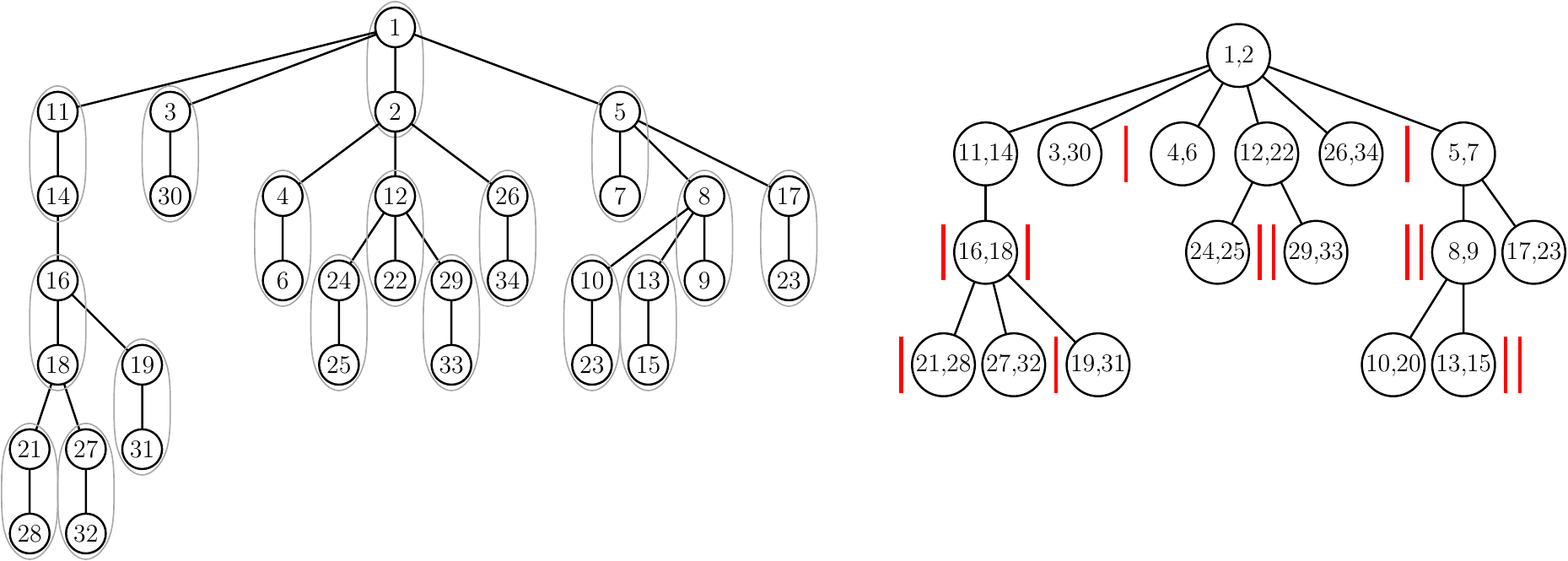} 
\end{center}
\caption{An increasing plane-oriented recursive tree and the bijectively equivalent $3$-bundled bilabelled increasing plane-oriented recursive tree. 
(The grey ellipses are just here to sketch the bijective correspondences.)}
\label{fig:increasingtree}
\end{figure}

The family of $3$-bundled trees is defined by Equation~\eqref{eq:inctree} with the following degree-weight generating function 
\begin{equation}
	\label{eq:phi3bundled}
	\varphi(t)=\frac{1}{(1-t)^{3}}=\sum_{k\geq 0} \binom{k+2}{2} t^k.
\end{equation} 
In other words, each node may have any number $k$ of children, and 
the binomial indicates two bars between these children, 
thus creating $3$ (possibly empty) sequences (or {\em bundles}) of children. 
From~\cite{KubPan2016} we get the remarkably simple closed form
\begin{align}
\label{eq:3bundledclosed}
T(z)&=1-\sqrt{1-z^2}
=\sum_{n\ge 1}(2n-1)!!(2n-3)!!\frac{z^{2n}}{(2n)!},
\end{align}
where the double factorials are defined as
\begin{equation*}
(2n-1)!!=\prod_{k=1}^{n}(2k-1)=\frac{(2n)!}{n!\cdot 2^n},
\end{equation*}
with initial values of $T_{2n}$ given by $1, 3, 45, 1575, 99225, 9823275, 1404728325,\dots$, constituting sequence \href{https://oeis.org/A079484}{A079484} in the \textsc{Oeis}. 

We are interested in the random variable $X_n$ counting the root degree of 
these $3$-bundled bilabelled increasing trees of size $n$, under the uniform random tree model.
Note that by definition there are no 
 bilabelled trees with an odd number of labels, so $T_{2n+1}=0$ and, consequently, $X_{2n+1}=0$.
In the following we use the notation $R_n=X_{2n}$ for the root degree. 

By~\eqref{eq:inctree} and \eqref{eq:phi3bundled}, the generating function
$
T(z,u)=\sum_{n\ge 1}T_n\E(u^{X_n})\frac{z^{n}}{n!}
$
satisfies
\begin{equation*}
\frac{\partial^2}{\partial z^2}T(z,u)=\varphi\big(u T(z)\big)
=\frac{1}{\big(1-u(1-\sqrt{1-z^2})\big)^3}.
\end{equation*}
Since a derivative with respect to $z$ is simply a shift in the coefficient sequence of exponential generating functions, we obtain
\begin{equation*}
\E(u^{R_{n+1}})
=\frac{(2n)!}{T_{2n+2}}[z^{n}]\frac{1}{\big(1-u(1-\sqrt{1-z})\big)^3}.
\end{equation*} 
Now we observe, by Stirling's formula for the gamma function 
and singularity analysis~\eqref{EqSA1} applied to~\eqref{eq:3bundledclosed}, that 
\begin{equation}
\frac{T_{2n+2}}{(2n)!}\sim \frac{2\sqrt{n}}{\sqrt{\pi}}\sim[z^n]\frac{1}{(1-z)^{3/2}}.
\label{BilabelledExpansion1}
\end{equation}
This implies that, except for the non-standard shift in the random variable, the problem is equivalent (for first-order asymptotics) to the composition scheme $(1-u(1-\sqrt{1-z}))^{-3}$, i.e., $\rho_H=1$, $\lambda_H=\frac12$, $\lambda_G=-3$, and $\lambda_M=+\infty$ (as $M(z)=1$ is entire).
We note in passing that in this special case, it is also possible to obtain a quite simple closed-form expression for the probability mass function, as well as the (factorial) moments, due to the explicit expressions for the involved generating functions:
\begin{equation*}
\E(\fallfak{X_{2n+2}}{s})=\frac{(2n)!}{T_{2n+2}}[z^{n}]\left.\partial_u^s\frac{1}{\big(1-u(1-\sqrt{1-z})\big)^3} \right|_{u=1}.
\end{equation*}

However, here we use our general scheme and Theorem~\ref{TheExtended}. 
We apply Legendre's duplication formula and obtain
\begin{equation*}
\frac{\Gamma(s+3)\Gamma(\frac32)}{\Gamma(3)\Gamma(\frac{s+3}2)}
=2^s\cdot\Gamma\left(\frac{s+4}{2}\right).
\end{equation*}
This leads to the following result.
\begin{coroll}
The random variable $R_n$, counting the root degree in a random 
strict bilabelled increasing $3$-bundled tree with $2n$ labels, with tree generating function given by $\varphi(t)=(1-t)^{-3}$, satisfies 
\begin{equation*}
\E(\fallfak{R_{n}}{s})\sim n^{s/2}\cdot 
2^s\cdot\Gamma\left(\frac{s+4}{2}\right).
\end{equation*}
The random variable $R_{n}/n^{1/2}$ converges (in distribution and in moments) to a multiple of a chi-distributed random variable $X\law \chi(4)$, with four degrees of freedom:
\begin{equation*}
\frac{R_n}{n^{1/2}}\cmom \sqrt{2}\cdot X, \quad X\law \chi(4). 
\end{equation*}
\end{coroll}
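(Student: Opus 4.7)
The plan is to apply Theorem~\ref{TheExtended} in the special case $\lambda_H=\frac12$, $\lambda_G=-3$, $\lambdaM=0$, and then translate the resulting moment sequence into a chi distribution via Legendre's duplication formula. First I would dispose of the fact that $T(z,u)$ contains only even powers of $z$: the generating-function identity $\partial_z^2 T(z,u)=(1-u(1-\sqrt{1-z^2}))^{-3}$ together with the already-established equivalent formulation
\begin{equation*}
\E(u^{R_{n+1}})=\frac{(2n)!}{T_{2n+2}}\,[z^n]\frac{1}{\bigl(1-u(1-\sqrt{1-z})\bigr)^{3}}
\end{equation*}
makes it clear that, after the substitution $z^2\mapsto z$, the probability generating function of $R_{n+1}$ is exactly the coefficient ratio for the composition scheme
\begin{equation*}
F(z,u)=G\bigl(uH(z)\bigr),\qquad G(w)=(1-w)^{-3},\quad H(z)=1-\sqrt{1-z},\quad M(z)=1.
\end{equation*}

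Next I would read off the analytic data of the scheme from the standard expansions: $\rho_G=1$, $\lambda_G=-3$, $c_G=1$; and $\rho_H=1$, $\tau_H=H(1)=1$, $c_H=-1$, $\lambda_H=\tfrac12$. The criticality condition $H(\rho_H)=\rho_G$ is satisfied, and because $0<\lambda_H<1$ and $\lambda_G<0$ the scheme has small singular exponents in the sense of Definition~\ref{def:small}. The scaling constant of Theorem~\ref{TheExtended} is $\kappa=\tau_H/(-c_H)=1$, so the factorial moment asymptotics read
\begin{equation*}
\E(\fallfak{R_n}{s})\sim n^{s/2}\cdot\frac{\Gamma(s+3)\,\Gamma(3/2)}{\Gamma(3)\,\Gamma\bigl(\tfrac{s+3}{2}\bigr)}.
\end{equation*}
A one-line application of Legendre's duplication formula $\Gamma(s+3)=\frac{2^{s+2}}{\sqrt{\pi}}\,\Gamma(\tfrac{s+3}{2})\,\Gamma(\tfrac{s+4}{2})$ collapses this ratio to $2^s\,\Gamma\bigl(\tfrac{s+4}{2}\bigr)$, which is precisely the announced expression.

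To identify the limit law, I would simply compare moments: a random variable $X\law\chi(4)$ has $\E(X^s)=2^{s/2}\Gamma(\tfrac{s+4}{2})/\Gamma(2)=2^{s/2}\Gamma(\tfrac{s+4}{2})$, so $\sqrt{2}X$ has moments $2^s\Gamma(\tfrac{s+4}{2})=\mu_s$. Since the moment sequence $(\mu_s)$ satisfies Carleman's condition (as already verified inside the proof of Theorem~\ref{TheExtended}), the convergence $R_n/\sqrt{n}\claw\sqrt{2}\,X$ with convergence of all moments follows from the Fréchet--Shohat half of that theorem, and the trivial shift from $R_{n+1}$ to $R_n$ (or, equivalently, from index $2n$ to $2n+2$) is irrelevant for the first-order asymptotics.

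The only genuinely delicate step is the bookkeeping of the index parity and the substitution $z^2\mapsto z$: one must verify that the denominator $T_{2n+2}/(2n)!$ is indeed produced, via singularity analysis of $(1-z)^{-3/2}$ at $z=1$, with the same constant as would arise from applying the asymptotic formula~\eqref{ExpansionF2} of Theorem~\ref{TheExtended} to $F(z)=G(H(z))$; this is precisely~\eqref{BilabelledExpansion1} and so costs nothing. Everything else is a direct substitution into the master formula, and the only computation that requires a small trick is the duplication-formula simplification, which I would state as a short lemma-like identity rather than as a chain of manipulations.
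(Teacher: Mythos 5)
Your proposal is correct and follows essentially the same route as the paper: reduce to the scheme $G(uH(z))$ with $G(w)=(1-w)^{-3}$, $H(z)=1-\sqrt{1-z}$ via the substitution $z^2\mapsto z$, apply Theorem~\ref{TheExtended} with $\lambda_H=\tfrac12$, $\lambda_G=-3$, $\lambdaM=0$, $\kappa=1$, and simplify via Legendre's duplication formula to identify the limit as $\sqrt{2}\,\chi(4)$. The only differences are presentational (you make the index-parity bookkeeping and the moment-comparison with $\chi(4)$ more explicit than the paper, which just cites~\eqref{BilabelledExpansion1} and states the conclusion).
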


We can refine the root degree by looking at the branching structure. 
We denote by $R_{n,j}=X_{2n,j}$ the random variable counting the number of branches (i.e., subtrees) with $2j$ labels, $1\le j\le n$, attached to the root:
\begin{equation*}
R_n=\sum_{j\ge 1}R_{n,j}.
\end{equation*}
Such random variables naturally arise in the context of the Chinese restaurant process~\cite{KuPa2014,Pitman1995,Pitman2006} and generalized plane-oriented recursive trees~\cite{KubPan2006-L}. See also Feng et al.~\cite{HuFenSu2006} for the analysis of the branching structure of recursive trees.
The generating function $T(z,v)=\sum_{n\ge 1}T_n\E(v^{X_{n,j}})\frac{z^{n}}{n!}$
satisfies
\begin{equation*}
\frac{\partial^2}{\partial z^2}T(z,v)=\varphi\left(T(z)-(1-v)\frac{T_{2j}}{(2j)!}z^{2j}\right).
\end{equation*}
Consequently,
\begin{equation*}
\E(u^{R_{n+1,j}})=\frac{(2n)!}{T_{2n+2}}[z^{2n}]\Big(1-\big((1-\sqrt{1-z^2})-(1-v)\frac{T_{2j}}{(2j)!}z^{2j}\big)\Big)^{-3}.
\end{equation*}
We can use the asymptotics~\eqref{BilabelledExpansion1} and apply Theorem~\ref{TheRefined} to obtain the following result.
\begin{coroll}
The random variable $R_{n,j}$ counting the number of size $2j$ branches attached to the root in a random 
strict bilabelled increasing $3$-bundled tree with $2n$ labels, 
associated with
$\varphi(t)=(1-t)^{-3}$, 
has factorial moments of mixed Poisson type,
\begin{equation*}
		\E(\fallfak{R_{n,j}}s)=\mppar_{n,j}^s\cdot \E(X^s) (1+o(1)),
\end{equation*}
with $\mppar_{n,j}=\sqrt{2}\cdot \frac{T_{2j}}{(2j)!}\cdot n^{1/2}$ and mixing distribution $X=\chi(4)$. 

Furthermore, the random variable $R_{n,j}$ possesses the three successive asymptotic régimes
of Theorem~\ref{TheRefined}, with a phase transition at $j=\Theta(n^{1/3})$.
\end{coroll}

\subsection{Returns to zero: walks and bridges with drift zero\label{ExReturns}}
A \emph{lattice path} of length~$n$ is a sequence $(s_1,\ldots, s_n)$ of steps $s_i \in \Sc$ for a fixed finite subset $\Sc \subseteq \ZZ$ called step set.
Geometrically, we fix the starting point $0$ and consider the partial sums $\sum_{i=1}^k s_i$ which can be interpreted as appending the steps one after another. 
Each step $s_i$ gets a weight $p_i>0$ and the weight of a path is the product of the weights of its steps. 
The \emph{step polynomial} 
\[P(u) = \sum_{i} p_i u^i\]
connects the weights and the steps.
We call a step set \emph{periodic} if there exist integers $b,p \in \ZZ$, $p \geq 2$ such that $P(u) = u^b P(u^p)$; otherwise we call it \emph{aperiodic}.
\pagebreak

Here and in the next section, we assume that the step set is aperiodic.
Note that this is not a  major constraint as the asymptotics of walks with periodic steps can be deduced from the ones with aperiodic ones; see~\cite{BaWa2019}. 
We call a lattice path a walk if it is unconstrained, and a bridge if it ends at zero, i.e., $\sum_{i=1}^n s_i = 0$.
A \emph{return to zero} is a point in the path such that $\sum_{i=1}^k s_i =0$; see Figure~\ref{fig:returnstozerowalk}.

\begin{figure}[t]
\begin{center}
\includegraphics[width=.75\textwidth]{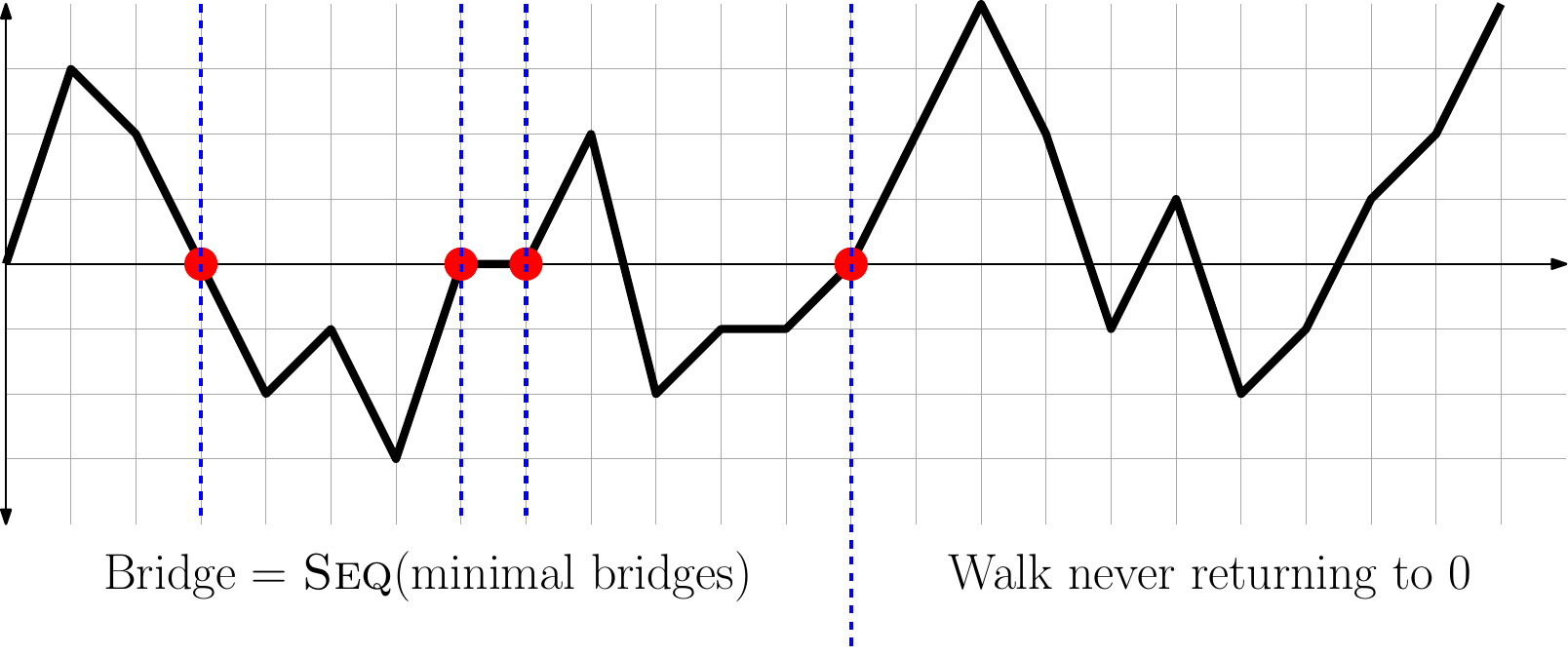} 
\end{center}
\caption{A walk consists of an initial bridge which contains all returns to zero (red dots) and a final walk never returning to zero. The bridge is further decomposed into minimal bridges touching zero only twice. }
\label{fig:returnstozerowalk}
\end{figure}

Generalizing results from Feller~\cite[Problems 9--10]{Feller} and Barton~\cite[Discussion p.~115]{SkellamShenton1957}, 
it was shown in~\cite[Section~3.2]{Wallner2020} that for drift $P'(1) = 0$ the law of the number of returns to zero follows a Rayleigh distribution for bridges, while it follows a half-normal distribution for walks. 
This result follows easily from our Theorem~\ref{TheExtended}.

Let $w_{n,k}$ be the number of walks of length $n$ with $k$ returns to zero.
The bivariate generating function of walks $W(z,u) = \sum_{n,k\geq0} w_{n,k} z^n u^k$ is given by 
\begin{align}
	\label{eq:returnswalks}
	W(z,u) &= \frac{1}{1-u\left(1-1/B(z)\right)} \frac{W(z)}{B(z)},
\end{align}
where $B(z)$ and $W(z)=1/(1-zP(1))$ are the generating functions of bridges and walks, respectively; see~\cite[Equation~(3.3)]{Wallner2020}.
To explain~\eqref{eq:returnswalks}, observe that every bridge is a sequence of \emph{minimal bridges}, 
which are bridges that never return to the $x$-axis between the start- and endpoint; see Figure~\ref{fig:returnstozerowalk}. 
Therefore, minimal bridges are enumerated by $1-1/B(z)$.
Hence, this is exactly the situation of the extended composition scheme~\eqref{Eq4} with $G(z)=1/(1-z)$, $H(z)=1-1/B(z)$, and $M(z)=W(z)/B(z)$.
Now, for zero drift, 
one has
\begin{align}
	\label{eq:Bsing}
	B(z) = \frac{c_B}{\sqrt{1-zP(1)}} + \O(1) \quad \text{with} \quad 
	c_B = \sqrt{\frac{P(1)}{2P''(1)}}.
\end{align}
Hence, one has $\lambda_G=-1$, $\lambda_H=\frac{1}{2}$, and $\lambdaM=-\frac{1}{2}$, therefore we are in the pure régime; see Definition~\ref{def:pure}.
This is exactly the situation in Remark~\ref{CoExtended1} and the number of returns to zero in walks 
thus follows a half-normal distribution with parameter $\sigma=\sqrt{2}c_B = \sqrt{P(1)/P''(1)}$.

Now, the generating function for bridges is nearly the same as $W(z,u)$ from~\eqref{eq:returnswalks} except that the last factor $W(z)/B(z)$ is omitted. 
So, by Corollary~\ref{CoExtended1}, the number of returns to zero here follows a Rayleigh distribution with the same parameter $\sigma=\sqrt{P(1)/P''(1)}$.

We can refine this result for the random variable~$X_{n,j}$ 
counting the number of distance-$j$-zeroes (which were introduced in~\cite{KuPa2014}). 
These are the number of returns to zero which have a distance of exactly $j$ steps to the \textit{previous} zero contact. 
The union over $j$ of distance-$j$-zeroes gives all returns to zero, and they therefore clearly represent a partition of all returns to zero.
Using Theorem~\ref{TheRefined}, we then get the following limit theorem.
 
\begin{coroll}
Let $X_{n,j}$ be the number of distance-$j$-zeroes in lattice paths of length~$n$.
For walks (resp.\ bridges) with zero drift (i.e., $P'(1)=0$), $X_{n,j}$ 
has factorial moments of mixed Poisson half-normal type (resp.\ mixed Poisson Rayleigh type)
\begin{align}
	\label{eq:Xnjwalksbridges}
	\E(\fallfak{X_{n,j}}s)=\mppar_{n,j}^s\cdot \E(X^s) \left(1+o(1)\right),
\end{align}
with $\mppar_{n,j}=\sqrt{\frac{P(1)}{2P''(1)}} \frac{h_{j}}{P(1)^{j}} \cdot n^{1/2}$, where $X$ is given by
\begin{align*}
	X &= 
	\begin{cases}
		\text{HN}(\sigma) & \text{ for walks},\\
		\text{Rayleigh}(\sigma) & \text{ for bridges},
	\end{cases}
	&
	\sigma &= \sqrt{\frac{P(1)}{P''(1)}}.
\end{align*}
Furthermore, the random variable $X_{n,j}$ possesses the three successive asymptotic régimes of Theorem~\ref{TheRefined}, with a phase transition at $j=\Theta(n^{1/3})$.
\end{coroll}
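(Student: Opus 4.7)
The plan is to recognize that $X_{n,j}$ is governed by an instance of the refined composition scheme~\eqref{eq:scheme_refined} and then invoke Theorem~\ref{TheRefined} together with the identification of the mixing distribution from Theorems~\ref{CoExtended1} and~\ref{CoExtended2}. First, I would refine~\eqref{eq:returnswalks} by marking minimal bridges of length exactly~$j$ inside the sequence that forms the initial bridge. Since a distance-$j$-zero is by definition the endpoint of a minimal bridge of length $j$, the bivariate generating function becomes
\begin{equation*}
W(z,v) = \frac{1}{1-\left[\left(1-\tfrac{1}{B(z)}\right)-(1-v)h_j z^j\right]}\cdot \frac{W(z)}{B(z)}, \qquad h_j := [z^j]\!\left(1-\tfrac{1}{B(z)}\right),
\end{equation*}
with the same expression for bridges upon replacing the final factor $W(z)/B(z)$ by $1$. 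This fits~\eqref{eq:scheme_refined} with $G(w)=1/(1-w)$, $H(z)=1-1/B(z)$, and $M(z)=W(z)/B(z)$ (walks) or $M(z)=1$ (bridges).

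Next, I would extract the singular data needed to invoke Theorem~\ref{TheRefined}. From~\eqref{eq:Bsing}, inverting the expansion of $B(z)$ near $z=1/P(1)$ yields
\begin{equation*}
H(z) = 1 - \frac{1}{c_B}\sqrt{1-zP(1)} + o\!\left(\sqrt{1-zP(1)}\right),
\end{equation*}
so that $\rho_H=1/P(1)$, $\tau_H=1$, $\lambda_H=1/2$, and $c_H=-1/c_B=-\sqrt{2P''(1)/P(1)}$. For $G$ one has $\rho_G=1$ and $\lambda_G=-1$, and the scheme is critical since $H(\rho_H)=1=\rho_G$. For walks, $M(z) = \frac{1}{(1-zP(1))B(z)}$ has the same singularity $\rho_M=\rho_H$, and the combination of the simple pole of $W(z)$ with the factor $1/B(z)\sim (1-zP(1))^{1/2}/c_B$ produces $\lambda_M=-1/2$; for bridges $M\equiv 1$ and we set $\lambda_M=0$. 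In both cases $\lambda_G<0$ and $0<\lambda_H<1$, so the small-singular-exponent hypothesis of Definition~\ref{def:small} is satisfied.

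Plugging this into Theorem~\ref{TheRefined}, the formula $\theta_{n,j}=\rho_H^j\,h_j\,n^{\lambda_H}/(-c_H)$ specializes to $c_B\,h_j/P(1)^j\cdot n^{1/2}$, which matches the stated value since $c_B=\sqrt{P(1)/(2P''(1))}$. The mixing distribution is the continuous limit law $X$ of Theorem~\ref{TheExtended}: in the bridge case ($\lambdaM=0$), Theorem~\ref{CoExtended1} with $\lambda_G=-1$ and $\lambda_H=1/2$ identifies $X$ as a Rayleigh law; in the walk case ($\lambdaM=-1/2$), Theorem~\ref{CoExtended2} applies in its special subcase $\lambda_G=-1$, $\lambda_H-\lambdaM=1$, yielding a half-normal law of the same parameter. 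Finally, the critical growth range $j=\Theta(n^{\lambda_H/(1+\lambda_H)})=\Theta(n^{1/3})$ and the three phase-transition regimes are read off directly from cases~\eqref{it:limita}--\eqref{it:limitc} of Theorem~\ref{TheRefined}.

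The most delicate step, I expect, will be the singular analysis of $M(z)$ for walks: one must verify that the simple pole of $W(z)$ and the square-root singularity of $1/B(z)$ combine cleanly to a single $\lambda_M=-1/2$ expansion with $\rho_M=\rho_H$, so that the extended scheme remains critical with matching radii and $M$ genuinely contributes to the asymptotics via~\eqref{ExpansionF-a}. Once this is secured, the rest is bookkeeping; the only nontrivial identification is that in the walk case the parameters land exactly in the special subcase of Theorem~\ref{CoExtended2} producing a pure half-normal mixing distribution, with no residual beta factor.
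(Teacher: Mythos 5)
Your proposal is correct and follows essentially the same route the paper takes: write the bivariate generating function of distance-$j$-zeroes as an instance of the refined scheme~\eqref{eq:scheme_refined} with $G(w)=1/(1-w)$, $H(z)=1-1/B(z)$, $M(z)=W(z)/B(z)$ (walks) or $M\equiv 1$ (bridges), extract $\lambda_G=-1$, $\lambda_H=1/2$, $\lambdaM=-1/2$ or $0$ from~\eqref{eq:Bsing}, and invoke Theorem~\ref{TheRefined} plus Theorems~\ref{CoExtended1} and~\ref{CoExtended2} for the mixing law. You are in fact a bit more explicit than the paper on the one step you flag as delicate (the combination of the simple pole of $W(z)$ with the square-root vanishing of $1/B(z)$ to obtain $\lambda_M=-1/2$ with $\rho_M=\rho_H$); the paper simply asserts $\lambdaM=-1/2$ after stating~\eqref{eq:Bsing}, while you carry out the product of the singular expansions, which is a welcome bit of rigor rather than a departure in method.
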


\begin{remark}[Universality of the rescaling factor]
\label{rem:walksbridges}
Note that the rescaling factor $\mppar_{n,j}$ in~\eqref{eq:Xnjwalksbridges} is the same for walks and bridges, 
while the moment sequence $\mu_s$ changes. 
This independence of $\mppar_{n,j}$ can be explained:
In Figure~\ref{fig:returnstozerowalk}, the last factor of the walk is a walk not touching zero and is encoded by $M(z) = W(z)/B(z)$. 
Then by Theorem~\ref{TheRefined} we know that $\mppar_{n,j}$ is independent of this factor, and thus has the same value for walks and bridges.

Furthermore, this factor $M(z)$ is also responsible for the often observed dichotomy between half-normal and Rayleigh distributions in the extended composition scheme which we will also observe in the next examples of initial returns and sign changes. 

Note that Formula~\eqref{eq:Xnjwalksbridges} offers a neat factorization for the moments: 
One can regroup in one factor the quantities with a probabilistic flavour (involving the variance $P''(1)$ of the allowed steps, and $\mu_s$), 
while the remaining factor ($h_j$, the number of minimal bridges of length $j$) corresponds to a quantity with a combinatorial flavour.
This could also be explained using renewal~theory.
\end{remark}

\subsection{Initial returns in coloured bridges}\label{sec:mcolouredbridges}
We generalize the previous model by introducing $m$-coloured bridges $B_m$ (see Figure~\ref{fig:colouredbridge}): 
We append $m$ non-empty bridges one after the other (and each one with a different colour): 
$B_m = (B - 1)^m.$ 
Then, we are interested in the number of returns to zero in the first bridge, i.e., the initial one that we uniformly coloured. 
We call such returns the \emph{initial returns}.

\begin{figure}[b]
\begin{center}
\includegraphics[width=.75\textwidth]{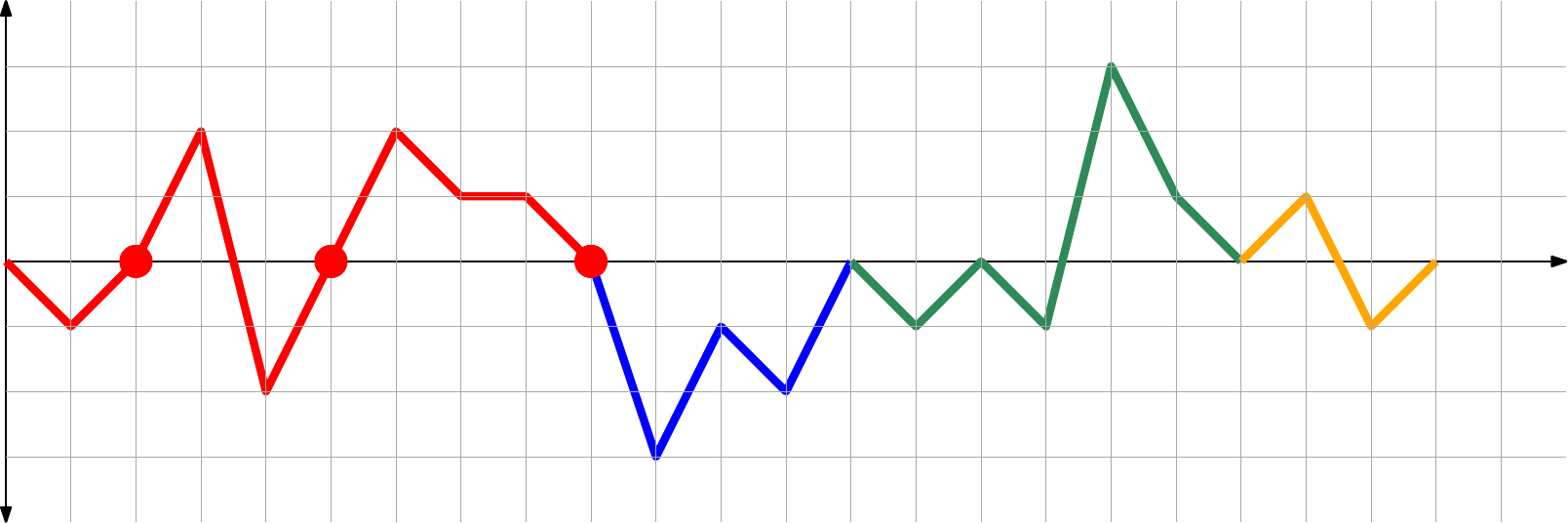} 
\end{center}
\caption{A $4$-coloured bridge, with all its initial returns to zero marked by red dots. }
\label{fig:colouredbridge}
\end{figure}

Reusing the combinatorial constructions of the previous section, this gives for the bivariate generating function $B_m(z,u)$ the following decomposition
\begin{align}
	\label{eq:mcoloredbridgeBGF}
	B_m(z,u) &= \left(\frac{1}{1-u\left(1-1/B(z)\right)}-1\right) (B(z)-1)^{m-1}.
\end{align}
For $m=1$ this is~\eqref{eq:returnswalks} except the factor $W(z)/B(z)$ and the constraint to be non-empty. Asymptotically, and therefore for the law, the non-emptiness is negligible.
The generating function $W_m(z,u)$ of $m$-coloured walks ($m$-tuples of bridges with a few more steps coloured in the same colour as the final bridge) is given by
\begin{align*}
	W_m(z,u) &= (1 + B_{m}(z,u)) \frac{W(z)}{B(z)}.
\end{align*}
Now, we can directly apply Theorem~\ref{TheExtended}. 
From the reasoning above we see that $\lambda_G=-1$, $\lambda_H=\frac{1}{2}$, and $\lambda_M=-\frac{{m-1}}{2}$ for bridges and $\lambda_M=-\frac{m}{2}$ for walks. 
\begin{coroll}
\label{coro:mcolouredlimitlaw}
The random variable $X_{n}$ counting the number of initial returns in a $m$-coloured walk (resp.\ bridge) of length $n$ satisfies
\begin{align*}
\E(\fallfak{X_n}s) &\sim n^{s/2} \left(\frac{\sigma}{\sqrt{2}}\right)^s \mu_s, &
 \sigma &= \sqrt{\frac{P(1)}{P''(1)}}, &
 \mu_s &=
\begin{cases}
	\frac{\Gamma(s+1) \Gamma((m+1)/2)}{\Gamma((m+s+1)/2)},& \text{ for walks,}\\
	\frac{\Gamma(s+1) \Gamma(m/2)}{\Gamma((m+s)/2)}, & \text{ for bridges.}
\end{cases}
\end{align*}
The random variable $X_n/n^{1/2}$ converges in distribution with convergence of all moments 
\begin{align*}
\frac{X_n}{n^{1/2}}&\cmom X,
&	X \law 
	\begin{cases}
	\frac{\sigma}{\sqrt{2}} \BML\left( \frac{1}{2}, \frac{1}{2}, \frac{m}{2} \right), & \text{ for walks,}\\
	\frac{\sigma}{\sqrt{2}} \BML\left( \frac{1}{2}, \frac{1}{2}, \frac{m-1}{2} \right), & \text{ for bridges.}
	\end{cases}
\end{align*}
These two limit laws can also be seen as the product of independent random variables, 
namely a Rayleigh and a scaled beta distribution (see Definition~\ref{ex:beta} and Example~\ref{ExRay}):
\begin{align*}
X & \law \text{Rayleigh}(\sigma) \cdot B^{1/2}, 
\quad \text{ with} \quad
	B = 
	\begin{cases}
	\operatorname{Beta}\left(\frac{1}{2}, \frac{m}{2}\right), & \text{ for walks,}\\
	\operatorname{Beta}\left(\frac{1}{2}, \frac{m-1}{2}\right), & \text{ for bridges,}
	\end{cases}
\end{align*}
where $\operatorname{Beta}(\alpha,0)=1$. Moreover, we have the local limit theorem
\begin{equation*}
\P\{X_n=x \cdot n^{1/2}\} \sim n^{-1/2} \cdot f_X(x),
\end{equation*}
where, for bridges, the density $f_X(x)$ 
is given by 
\begin{align*}
	f_X(x) &= \sqrt{\frac{2}{\pi \sigma^2}} \, \Gamma\left(\frac{m}{2}\right) e^{-\frac{x^2}{2\sigma^2}} \, U\left(\frac{m}{2}-1,\frac{1}{2},\frac{x^2}{2\sigma^2}\right),
\end{align*}
where $U(a,b,x)$ is the confluent hypergeometric function of the second kind which is the solution of $zy'' + (b-z)y'-ay=0$ such that $U(a,b,x) \sim z^{-a}$ for $z \to \infty$ and ${|\arg(z)| < 3\pi/2}$; see~\cite[\href{https://dlmf.nist.gov/13.2}{Section~13.2}]{NIST:DLMF}.
For walks, one replaces $m$ by $m+1$.
\end{coroll}

Observe the special cases $U(-1/2,1/2,x)=\sqrt{x}$ and $U(0,1/2,x)=1$ which nicely give the density functions of a Rayleigh (see Example~\ref{ExRay}) and a half-normal distribution (see Example~\ref{ExHaNo}).
Hence, for $m=1$ we recover the results of the previous section and uncover a large family of connected probability distributions. 
It is interesting that this family also appears in the context of preferential attachments in graphs~\cite[Formula~(1.1)]{PekoezRoellinRoss2013}.

It is also interesting to consider multicoloured bridges, where we allow any number of colours. We still mark by $u$ the initial returns. The corresponding 
generating function is 
\begin{align*}
	B(z,u) &= \sum_{m \geq 1} B_m(z,u) 
	 = \left(\frac{1}{1-u\left(1-1/B(z)\right)}-1\right) \frac{1}{2-B(z)}.
\end{align*}

\pagebreak

\noindent The generating function for the number of multicoloured bridges is thus
$B(z,1) 		= \frac{1}{2-B(z)}-1$. 
From~\eqref{eq:Bsing} we see that $B(z)$ possesses a singularity of order $-1/2$ at $z=1/P(1)$, and hence $B(z,1)$ becomes singular at some $z_0>0$ which is the unique solution of $B(z_0)=2$. 
Hence, the probability generating function reveals a geometric distribution of parameter $1/2$:
\begin{align*}
	\frac{[z^n] B(z,u)}{[z^n] B(z,1)} &
		\sim \frac{1}{1-u\left(1-1/B(z_0)\right)}-1
		= \frac{u/2}{1-u/2}.
\end{align*}
As the truncated sum $\sum_{m=1}^{m_0} B_m(z,u)$ behaves asymptotically like $B_{m_0}(z,u)$,
we see here a phase transition from a continuous law (for any finite $m_0$) to a discrete law (when $m_0$ goes to infinity). 
Note that this phenomenon holds verbatim for walks.

Finally, let us apply the size-refined scheme Theorem~\ref{TheRefined}, counting initial returns in $m$-coloured bridges (or walks) which are a certain distance apart:
\begin{coroll}
\label{coro:mcolouredlimitlawsize-size-refined}
Let $X_{n,j}$ be the number of initial returns at distance $j$ from the previous zero in $m$-coloured walks or bridges of length $n$.
Then, $X_{n,j}$ has mixed Poisson type moments
\begin{align*}
	\E(\fallfak{X_{n,j}}s)=\mppar_{n,j}^s\cdot \E(X^s) \left(1+o(1)\right),
\end{align*}
with $\mppar_{n,j}=\sqrt{\frac{P(1)}{2P''(1)}} \frac{h_{j}}{P(1)^{j}} \cdot n^{1/2}$, $h_j = [z^j] (1-1/B(z))$, and where 
$X \law \frac{\sigma}{\sqrt{2}} \BML\left( \frac{1}{2}, \frac{1}{2}, \frac{m}{2} \right)$ for walks and $ X \law \frac{\sigma}{\sqrt{2}} \BML\left( \frac{1}{2}, \frac{1}{2}, \frac{m-1}{2} \right)$ for bridges, as given in Corollary~\ref{coro:mcolouredlimitlaw}. 
Furthermore, $X_{n,j}$ possesses the three successive asymptotic régimes
of Theorem~\ref{TheRefined}, with a phase transition at $j=\Theta(n^{1/3})$.
\end{coroll}

These results also hold for other variants of paths that are
in bijection with sequences 
that already appeared in the literature; 
see Table~\ref{tab:mcolouredDyck}.

\begin{table}[ht]
\centering
\begin{tabular}{@{}llll@{}}
	\toprule
	Steps & GF & Sequence & OEIS \\
	\midrule
	$\{\mathsf{U},\mathsf{D}\}$ & 
	$\frac{8z^2-2- \sqrt{1 - 4z^2}}{16z^2 - 3}$ &
	$1, 0, 2, 0, 10, 0, 52, 0, 274, 0, 1452, 0, 7716, \dots$ &
	\OEISs{A075436}\\
	$\{\mathsf{U},\mathsf{D},\mathsf{H}_1\}$ & 
	$\frac{z + \sqrt{1-4z^2}}{1-5z^2}$ &
	$1, 1, 3, 5, 13, 25, 61, 125, 295, 625, 1447, \dots$ &
	\OEISs{A098615}\\
	$\{\mathsf{U},\mathsf{D},\mathsf{H}_2\}$ & 
	$\frac{z^2 + \sqrt{1-4z^2}}{1-4z^2-z^4}$ &
	$1, 0, 3, 0, 11, 0, 43, 0, 173, 0, 707, 0, 2917, \dots$ &
	\OEISs{A026671}\\
	\bottomrule
\end{tabular}
\caption{
Multicoloured bridge models: 
they end at $0$ and use up steps $\mathsf{U}=(1,1)$, down steps $\mathsf{D}=(1,-1)$,\\ and 
horizontal steps $\mathsf{H}_i=(i,0)$ allowed only at altitude 0.
The limit laws of initial returns to zero\\ 
in these models are all the same and special cases of Corollaries~\ref{coro:mcolouredlimitlaw} and \ref{coro:mcolouredlimitlawsize-size-refined}. 
}
\label{tab:mcolouredDyck}
\end{table}

\subsection{Sign changes in walks} 
\label{ExSign}

Using the same notation as in Example~\ref{ExReturns},
we now define the \emph{sign} of the path $(s_1,\dots,s_n)$ after $k$ steps as $\operatorname{sgn}(\sum_{i=1}^k s_i) \in \{-1,0,1\}$. 
Thereby every lattice path is associated with a sequence of signs. 
A \emph{sign change} is therein any subsequence 
$(-1,0^*,1)$ or $(1,0^*,-1)$ where $0^*$ denotes a (possibly empty) sequence of~$0$s; see~Figure~\ref{fig:signchanges}.

In this section we consider \emph{Motzkin paths}.
They are composed of up steps $+1$, down steps~$-1$, and horizontal steps $0$; see again Figure~\ref{fig:signchanges}.
Their step polynomial is therefore given by $P(u)=\frac{p_{-1}}{u} + p_0 + p_1 u$ (with $p_{-1} p_0 p_1\neq 0$).
In the case of zero drift, let us show how to apply our results to get that the number of sign changes follows asymptotically a Rayleigh distribution for bridges and a half-normal distribution for walks, while for nonzero drift it follows a geometric distribution; see~\cite{Wallner2020}.

\begin{figure}[t]
\begin{center}
\includegraphics[width=.73\textwidth]{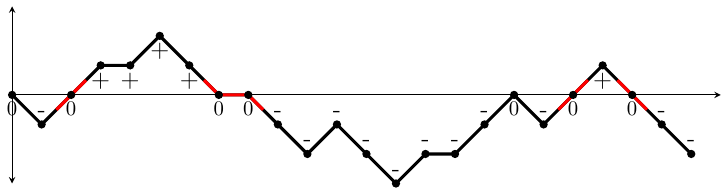} 
\end{center}
\caption{A Motzkin walk (i.e., step set $\Sc = \{-1,0,1\}$) with $4$ sign changes marked in red.}
\label{fig:signchanges}
\end{figure}

Combinatorially, we see that the bivariate generating function of bridges is 
\begin{align*}
	B(z,u) &= S(z)\left(1+\frac{2 H(z)}{1-uH(z)}\right), \quad \text{ where } 
	S(z) = \frac{1}{1-p_0 z} \quad \text{ and } \quad
	H(z) = \frac{E(z)}{S(z)}-1.
\end{align*}
Here, $S(z)$ counts sequences of horizontal steps, $E(z)$ counts excursions (bridges constrained to be nonnegative; see~\cite{BaFla2002}), and $H(z)$ counts excursions which start with an up or a down step (and not with a horizontal step). 

We now give the main corresponding Puiseux expansions. First one has 
\begin{align*}
	H(z) &= 1 - 2 \sqrt{\frac{2 P(1)}{P''(1)}} \sqrt{1-zP(1)} + \O(1-zP(1)).
\end{align*}
Then, as the radius of convergence $1/p_0$ of $S(z)$ is strictly larger than $1/P(1)$, which is the one of $H(z)$, 
we see that the additive term $S(z)$ is negligible for the limit law.
Thus, we have a composition scheme~\eqref{Eq4} where $M(z) = 2S(z)H(z)$ has the asymptotic expansion
\begin{align*}
	M(z) = 2E\left(\frac{1}{P(1)}\right) + \O\left(\sqrt{1-zP(1)}\right). 
\end{align*}
Hence, we have $\lambdaM =0$, which means that the factor $M(z)$ is asymptotically negligible for the law. 
The asymptotic dominant part arises from 
$\frac{1}{1-uH(z)}$ 
and we get from Corollary~\ref{CoExtended1} the expected convergence to a Rayleigh distribution with parameter $\sigma= -\sqrt{2} \frac{\tau_H}{c_H} = \frac12 \sqrt{\frac{P''(1)}{P(1)}}$.

A similar reasoning (and Remark~\ref{CoExtended1}) allows us to prove that the number of sign changes in walks asymptotically follows a half-normal distribution with the same parameter $\sigma$. 
We now refine the analysis by counting sign changes which are $j$ steps apart. 
Then we can apply Theorem~\ref{TheRefined} to get the following refined result which strongly depends on $H(z)=\sum_{j \geq 0} h_j z^j$. 
Note that a statement analogous to Remark~\ref{rem:walksbridges} also applies here.

\begin{coroll}
For walks of length $n$ of Motzkin paths, 
let the random variable $X_{n,j}$ be the number of sign changes at distance $j$ from the previous sign change or the origin.
For walks (resp.\ bridges) with zero drift (i.e., $P'(1)=0$), $X_{n,j}$ 
has factorial moments of mixed Poisson half-normal type (resp.\ mixed Poisson Rayleigh type)
\begin{equation*}
	\E(\fallfak{X_{n,j}}s)=\mppar_{n,j}^s\cdot \E(X^s) \left(1+o(1)\right),
\end{equation*}
with $\mppar_{n,j}=\frac12 \sqrt{\frac{P''(1)}{2P(1)}} \frac{h_{j}}{P(1)^{j}} \cdot n^{1/2}$ and mixing distributions 
\begin{align*}
	X &
	\law
	\begin{cases}
		\text{HN}(\sigma) & \text{ for walks},\\
		\text{Rayleigh}(\sigma) & \text{ for bridges}, 
	\end{cases}
	&
	\sigma &= \frac12 \sqrt{\frac{P''(1)}{P(1)}}.
\end{align*}
Furthermore, the random variable $X_{n,j}$ (for walks and for bridges) possesses the three successive asymptotic régimes
of Theorem~\ref{TheRefined}, with a phase transition at $j=\Theta(n^{1/3})$.
\end{coroll}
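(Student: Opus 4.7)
The plan is to reduce the claim to a direct application of Theorem~\ref{TheRefined}, exactly as the unmarked sign-change statistic was reduced to Theorems~\ref{CoExtended1} and~\ref{CoExtended2} in the preceding discussion. First, I would introduce the refinement variable $v$ marking precisely the $\HH$-components of size $j$, where the $\HH$-components here are the excursions of $H(z)$ (positive or negative) separating consecutive sign changes. Concretely, for bridges I would start from the dominant part of the bivariate generating function and replace $u H(z)$ by $H(z)-(1-v)h_j z^j$, yielding
\begin{equation*}
B(z,v) = S(z)\left(1 + \frac{2\bigl(H(z)-(1-v)h_j z^j\bigr)}{1-\bigl(H(z)-(1-v)h_j z^j\bigr)}\right),
\end{equation*}
and similarly for walks by multiplying by the additional factor $W(z)/B(z)$. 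In both cases the additive perturbation coming from $S(z)$ is analytic in a larger disk than $1/P(1)$ and therefore contributes only lower-order terms to $[z^n]\partial_v^s$, so it may be dropped without affecting the leading asymptotics; this is the same reduction already invoked above for the unmarked statistic.

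Second, I would identify the parameters of the resulting critical refined scheme~\eqref{eq:scheme_refined}. Writing $G(z)=1/(1-z)$, the singular expansion of $H(z)$ from the preceding paragraph yields $\rho_H = 1/P(1)$, $\tau_H=1$, $\lambda_H = 1/2$, and $c_H = -2\sqrt{2P(1)/P''(1)}$, while $G$ gives $\lambda_G=-1$. For bridges the residual factor $M(z)=2S(z)H(z)$ is bounded and nonzero at $\rho_H$, so $\lambdaM=0$; for walks the extra $W(z)/B(z)$ contributes $\lambda_M=-1/2$, hence $\lambdaM=-1/2$. All small-singular-exponent conditions of Definition~\ref{def:small} are met, so Theorem~\ref{TheRefined} applies, and the mixing distribution is the limit law of the extended scheme identified in Theorems~\ref{CoExtended1} and~\ref{CoExtended2}: a Rayleigh law of parameter $\sigma=\sqrt{2}\,\tau_H/(-c_H)$ for bridges and a half-normal law with the same $\sigma$ for walks. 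A short computation gives $\sigma = \tfrac12\sqrt{P''(1)/P(1)}$, matching the statement.

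Third, I would compute $\theta_{n,j}$ from the formula in Theorem~\ref{TheRefined}: plugging in $\rho_H=1/P(1)$, $\lambda_H=1/2$ and the value of $-c_H$ above gives
\begin{equation*}
\theta_{n,j} = \frac{\rho_H^{j}}{-c_H}\,h_j\,n^{1/2} = \frac{1}{2}\sqrt{\frac{P''(1)}{2P(1)}}\,\frac{h_j}{P(1)^{j}}\,n^{1/2},
\end{equation*}
which is exactly the factor in the statement. The moment formula $\E(\fallfak{X_{n,j}}s)=\theta_{n,j}^s\mu_s(1+o(1))$ and the critical scale $j=\Theta(n^{\lambda_H/(1+\lambda_H)})=\Theta(n^{1/3})$ then follow immediately from Theorem~\ref{TheRefined}.

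The main obstacle is the first step: the exact bivariate generating functions for sign changes do not literally match the refined scheme~\eqref{eq:scheme_refined} because of the additive pieces produced by the generating function $S(z)$ of horizontal runs at altitude zero (and, for walks, of the final-tail correction). Hence, before invoking Theorem~\ref{TheRefined} one must carefully verify that all such perturbations are analytic past $\rho_H$, so that on the level of $[z^n]\partial_v^s F(z,1)$ their contributions are exponentially smaller than those of the genuine critical part. Once this verification is in place --- which is a routine singularity-analysis argument analogous to the one used for the extended scheme above --- the corollary reduces to a direct specialization of Theorem~\ref{TheRefined}.
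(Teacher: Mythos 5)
Your proposal is correct and follows essentially the same route as the paper: the paper's own (implicit) proof of this corollary is the sentence ``As before, we refine the analysis by counting sign changes which are $j$ steps apart. Then we can apply Theorem~\ref{TheRefined},'' and you carry this out by plugging in the singular data $\rho_H=1/P(1)$, $\tau_H=1$, $\lambda_H=\tfrac12$, $c_H=-2\sqrt{2P(1)/P''(1)}$, $\lambda_G=-1$, and $\lambdaM=0$ for bridges versus $\lambdaM=-\tfrac12$ for walks (from the extra $W(z)/B(z)$ factor), recovering $\sigma=\tfrac12\sqrt{P''(1)/P(1)}$, $\theta_{n,j}=\tfrac12\sqrt{P''(1)/(2P(1))}\,h_j P(1)^{-j}\,n^{1/2}$, and the critical scale $n^{1/3}$ exactly as in the corollary. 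You also correctly flag the one genuine subtlety the paper mentions only in passing: the additive $S(z)$-perturbation (and the tail correction for walks) spoils an exact match to the scheme~\eqref{eq:scheme_refined} and must be shown to be asymptotically negligible before Theorem~\ref{TheRefined} can be invoked, which is the same routine singularity-analysis reduction the paper uses for the unrefined statistic.
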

\pagebreak 

\subsection{Tables in the Chinese restaurant process} \label{Chinese}
Following Aldous, Pitman, and Dubins (see~\cite{Aldous1983,Pitman1995,Pitman2006}), 
we now consider the Chinese restaurant process. 
This a discrete-time stochastic process having as value at time $n$ one of the $B_n$ partitions of the set $[n]=\{1, 2, \dots, n\}$
(where $B_n$ denotes the Bell numbers found as sequence \OEISs{A000110} in the \textsc{Oeis}).
One fancifully imagines a Chinese restaurant with an infinite number of tables, where each table has a possibly infinite number of seats. 
In the beginning the first customer takes a seat at the first table. At each discrete time step a new customer arrives and either joins one of the existing tables, or takes a seat at the next empty table.
Each table corresponds to a block of a random partition. 
The process thus starts at time $n = 1$ with the partition $\{ \{1\} \}$. 
Now, given a partition $T=\{t_1,\dots,t_k\}$ of $[n]$ with $|T|=k$ parts~$t_i$,
at time $n + 1$ the element $n + 1$ is either added to one of the existing parts $t_i\in T$ with probability 
\begin{equation*}
\P\{n+1 \prec t_i\}=\frac{|t_i|-\alpha}{n+\beta},\quad 1\le i\le k,
\end{equation*}
where $n +1 \prec t_i$ denotes that $n+1$ is a costumer sitting at table $t_i$, 
or as a new singleton block with probability
\begin{equation*}
\P\{n+1 \prec t_{|T|+1}\}=\frac{\beta+k \cdot \alpha}{n+\beta}.
\end{equation*}

This model (parametrized by the two parameters $0< \alpha <1$ and $\beta>-\alpha$)
thus assigns a probability to any particular partition~$T$ of~$[n]$. 
We are interested in the random variable $C_{n}$, counting the total number of tables in the Chinese restaurant process, 
as well as the random variable $C_{n,j}$, counting the number of parts of size $j$ in a partition of $[n]$.
As pointed out in~\cite{KuPa2014}, this process can be embedded into a variant
of the growth process of generalized plane-oriented recursive trees with two different connectivity parameters $a>0$ and $b>-1$. 
This allows us to study properties of the Chinese restaurant process using analytic combinatorial tools. For the reader's convenience, we restate this embedding below.

\begin{figure}[b]
\begin{center}
\includegraphics[width=.9\textwidth]{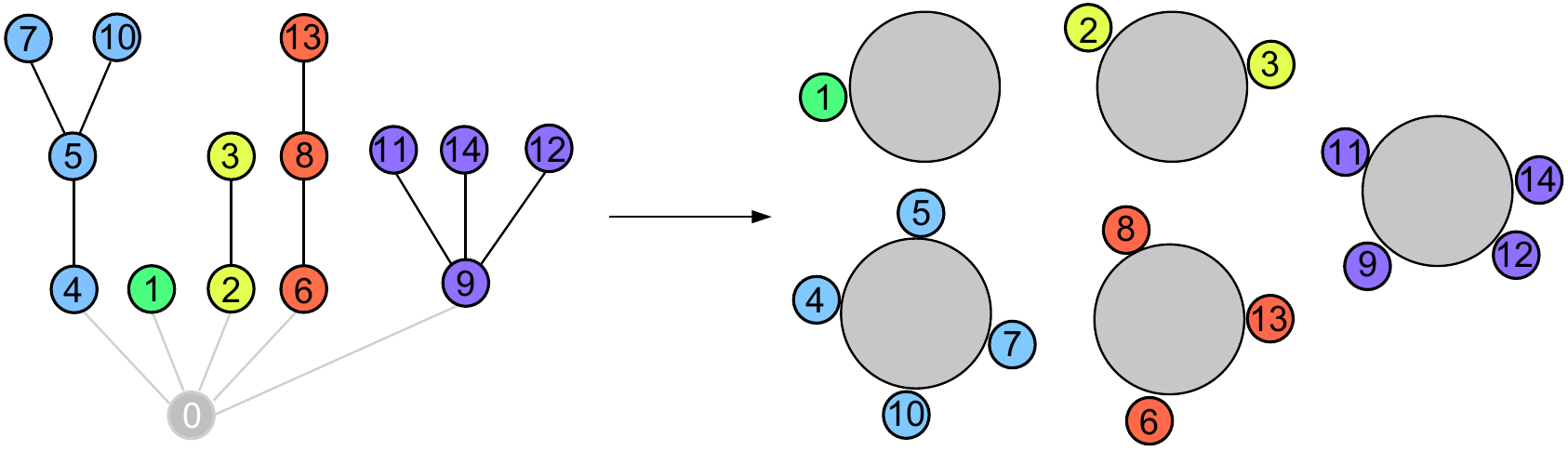} 
\end{center}
\caption{A plane-oriented recursive tree of size $15$ and the corresponding tables in the Chinese restaurant model.}
\label{fig:PortSubtreeSizesb}
\end{figure}

We collect the results of~\cite{KuPa2014} and complement them by extending the constraint $b>0$ to the full range $b>-1$ 
as well as by providing the missing identification of the limit law as a (moment-shifted) stable law.

Combinatorially, we consider a family $\mathcal{T}_{a,b}$ of generalized plane-oriented recursive trees, where the degree-weight generating function $\psi(t)=\frac{1}{(1-t)^b}$, $b>0$, associated with the root of the tree, is different to the one for non-root nodes in the tree, $\varphi(t)=\frac{1}{(1-t)^a}$, $a>0$. Then, the family $\mathcal{T}_{a,b}$ is closely related to the corresponding family $\mathcal{T}$ of 
generalized plane-oriented recursive trees with degree-weight generating $\varphi(t)=\frac{1}{(1-t)^a}$, $a>0$, via the following formal recursive equations (see Section~\ref{ExBilabelled} for the definition of the boxed product):
\begin{equation*}
\begin{split}
 \mathcal{T}_{a,b} =\mathcal{Z}^{\Box} \ast \psi(\mathcal{T}),
\qquad \mathcal{T} = \mathcal{Z}^{\Box} \ast \varphi(\mathcal{T}).
\end{split}
\end{equation*}
\pagebreak

The weight $w(T)$ of a tree $T \in \mathcal{T}_{a,b}$ is then defined by
\begin{equation*}
w(T) = \psi_{d(\text{root})}\prod_{v\in T\setminus\{\text{root}\}} \varphi_{d(v)},
\end{equation*}
where $d(v)$ denotes the outdegree of node~$v$. Thus, the generating functions
\begin{equation*}T_{a,b}(z)=\sum_{n\ge 1}T_{a,b;n}\frac{z^n}{n!} \text{\quad and \quad} T(z)=\sum_{n\ge 1}T_n\frac{z^n}{n!}\end{equation*} of the total weight of size-$n$ trees in $\mathcal{T}_{a,b}$ and $\mathcal{T}$, respectively, satisfy the differential equations
\begin{equation*}
T'_{a,b}(z)=\psi(T(z)) \text{\quad and \quad} T'(z)=\varphi(T(z)).
\end{equation*}

The ordinary tree evolution process to generate a random tree of arbitrary given size in the family $\mathcal{T}$ (see~\cite{PanPro2007} for a detailed discussion) can be extended in the following way to generate a random tree in the family $\mathcal{T}_{a,b}$.
The process, evolving in discrete time, starts with the root labelled zero.
At step $n+1$, with $n\ge 0$, the node with label $n+1$ is attached as a new child to any previous
node~$v$ (this is denoted by $n+1 \prec v$) with probabilities
\begin{equation*}
\P\{n+1\prec v \}=
\begin{cases}
\frac{d(v)+b}{b+(a+1)n} & \text{if $v$ is the root},\\
\frac{d(v)+a}{b+(a+1)n} & \text{if $v$ is not the root}.
\end{cases}
\end{equation*}
We recall the following result from~\cite{KuPa2014}.

\begin{prop}[Chinese restaurant process and generalized plane-oriented recursive trees]
\label{ChineseProp}
A random partition of $\{1,\dots,n\}$ generated by the Chinese restaurant process with parameters $0<\alpha<1$ and $\beta>0$ can be generated equivalently by the growth process of the family of generalized
plane-oriented recursive trees $\mathcal{T}_{a,b}$ when generating such a tree of size $n+1$. The parameters $\alpha,\beta$ and $a,b>0$, respectively, are related via
\begin{equation*}
	\alpha=\frac{1}{1+a},\qquad \beta=\frac{b}{1+a}.
\end{equation*}

The random variable $C_n$ is distributed as the outdegree $X_{n+1}$ of the root of a random generalized plane-oriented recursive trees of size $n+1$ from the family $\mathcal{T}_{a,b}$:
$C_{n}\law X_{n+1}$.

The random variable $C_{n,j}$ is distributed as the number $X_{n+1,j}$ of branches of size~$j$ attached to the root of a random tree
 of size $n+1$ from the family $\mathcal{T}_{a,b}$:
$C_{n,j}\law X_{n+1,j}$.
\end{prop}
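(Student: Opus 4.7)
The plan is to exhibit an explicit coupling between the Chinese restaurant process and the tree growth process on $\mathcal{T}_{\alpha,\beta}$, identifying \emph{tables} with \emph{root-branches}, and then check that the step-by-step transition probabilities agree after the stated change of parameters. Under this coupling, the distributional identities $C_n\law X_{n+1}$ and $C_{n,j}\law X_{n+1,j}$ will follow immediately.

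\textbf{Step 1 (The coupling).} First I would build the tree in $\mathcal{T}_{\alpha,\beta}$ of size $n+1$ incrementally, starting from the root labelled $0$, and at each discrete step $k=1,\dots,n$ insert a node labelled $k$ according to the growth rules stated before the proposition. Simultaneously, I run the Chinese restaurant process: to each root-branch of the current tree I associate one table, the customers sitting at that table being exactly the (non-root) labels belonging to that branch. Thus ``node $k+1$ is attached to the root'' is mapped to ``customer $k+1$ opens a new table'', while ``node $k+1$ is attached to some non-root node $v$ in the branch attached to root-child $c$'' is mapped to ``customer $k+1$ joins the table associated with~$c$''. Under this bijection, the number of tables equals the root degree, and the size of a table equals the size of the corresponding root-branch; so if the coupling is indeed distribution-preserving, the two identities $C_n\law X_{n+1}$ and $C_{n,j}\law X_{n+1,j}$ are immediate.

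\textbf{Step 2 (Matching transition probabilities).} The heart of the argument is to check that the conditional probabilities agree at every step. After $k$ insertions, suppose the current tree has root-degree $d_0$, subtree sizes $n_1,\dots,n_{d_0}$ at the root with $\sum n_i = k$, and write $s_i=n_i-1$ for the number of non-root nodes inside branch $i$ (so branch~$i$ carries $s_i+1$ nodes and the corresponding table has $n_i$ customers, if we identify the root-child itself with a customer). Using the tree growth rules, the probability that node $k+1$ lands in branch $i$ is obtained by summing the attachment weights $d(v)+\alpha$ over all non-root nodes $v$ of that branch and adding the root-child's contribution, while the probability of attaching to the root is $(d_0+\beta)/(\beta+(\alpha+1)k)$. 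The main technical point is that, for any subtree rooted at some non-root node $v_0$ with $m$ nodes, the sum $\sum_{v}(d(v)+\alpha)$ over its nodes telescopes, since a subtree on $m$ nodes has $m-1$ internal edges and thus $\sum_v d(v)=m-1$. This gives an aggregate weight for branch $i$ which, after normalization, matches exactly the Chinese restaurant probability $(|t_i|-a)/(k+\theta)$, and similarly the root-contribution matches $(\theta+d_0 a)/(k+\theta)$, once one substitutes $a=1/(1+\alpha)$ and $\theta=\beta/(1+\alpha)$. I expect this bookkeeping to be the main obstacle: one has to be careful about whether the root-child itself is counted as a customer or as the ``table-opening event'', and to check that the arithmetic factor $(1+\alpha)$ in the denominators is exactly the common factor absorbed by the rescaling of~$a,\theta$.

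\textbf{Step 3 (Conclusion).} Having checked the transition probabilities agree at every step, the two Markov chains (on partitions of $[n]$ resp.\ on trees of size $n+1$) are identical in law under the bijection of Step~1. In particular, the number of tables at time $n$ equals the root-degree of a random tree of size $n+1$, proving $C_n\law X_{n+1}$; and the number of tables of size $j$ at time $n$ equals the number of root-branches of size $j$, proving $C_{n,j}\law X_{n+1,j}$. Finally, the equivalence of this coupling with the analytic description via $\mathcal{T}_{\alpha,\beta}$ (using the degree-weight generating functions $\varphi$ and $\psi$) follows from the standard dictionary between degree-weighted tree growth processes and their associated EGFs, as recalled in the paragraphs preceding the proposition.
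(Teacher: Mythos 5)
Your coupling argument is correct, and it is essentially the canonical proof of this equivalence; note that the paper itself does not prove the proposition but recalls it from~\cite{KuPa2014}, where this embedding is established. The key computation is exactly the one you identify: a root-branch carrying $m$ nodes has total attachment weight $\sum_{v}(d(v)+\alpha)=(m-1)+m\alpha=(1+\alpha)\bigl(m-\tfrac{1}{1+\alpha}\bigr)$ (since a tree on $m$ nodes has $m-1$ edges), while the normalizing constant after $n$ insertions is $\beta+(\alpha+1)n=(1+\alpha)\bigl(n+\tfrac{\beta}{1+\alpha}\bigr)$; cancelling the common factor $(1+\alpha)$ yields $(m-a)/(n+\theta)$, and likewise the root weight $d(\mathrm{root})+\beta=k+\beta$ becomes $(ka+\theta)/(n+\theta)$, matching the two CRP transition probabilities. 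One bookkeeping remark: every non-root node of the tree (including each root-child) corresponds to a customer, so a branch with $m$ nodes is simply a table with $m$ customers; the auxiliary quantity $s_i=n_i-1$ you introduce in Step~2 is unnecessary and slightly obscures this, but the aggregate weight you compute is the right one, so the argument goes through.
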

Note that in the above relation, $\beta$ cannot be negative, since $b$ is assumed to be positive. 
As already observed in~\cite{KuPa2014}, the correspondence can be extended to the full range $0< \alpha <1$ and $\beta>-a$, where one has 
$a=\frac{1}{\alpha}-1>0 \text{ and } b=\frac{\beta}{\alpha}>-1.$
For $-1<b\le 0$, we cannot directly use the degree-weight generating function $\psi(t)=(1-t)^{-b}$. 
Indeed, for $-1<b<0$ we would have $\psi(t)=1+b t + \dots$, involving a negative weight; while for $b=0$ we would have $\psi(t)=1$, a degenerate case. 
However, we can use a modified generating function, leading to a correct model of the Chinese restaurant process in the range $-1<b\le 0$ (see~\cite{PanPro2007,KuPa2014} for more details on the growth process): 
\begin{equation*}
\psi(t)=1+\int_0^{t}\frac{1}{(1-x)^{1+b}} \, dx
= 1 + \frac{1}{b}\Big(\frac1{(1-t)^{b}}-1\Big)=1+\sum_{k\ge 1}\binom{b+k}{k-1}\frac{t^k}k,
\end{equation*}
for $-1<b<0$, while for $b=0$ one uses
\begin{equation*}
\psi(t)=1-\log(1-t)=1+\sum_{k\ge 1}\frac{t^k}k. 
\end{equation*}

\pagebreak

Thus, we have some generalized plane-oriented recursive trees attached to a root with a different tree-weight generating function $\psi(t)$. 
Summarizing, we have
\begin{equation*}
\psi(t)=
\begin{cases}
\frac{1}{(1-t)^b} & \text{ if } b>0,\\
1-\log(1-t) & \text{ if } b=0,\\
 1 + \frac{1}{b}\Big(\frac1{(1-t)^{b}}-1\Big) & \text{ if } -1<b<0.
\end{cases}
\end{equation*}
Here (except for the special case $b=0$, which is handled by a slightly 
different approach, detailed later in Theorem~\ref{COMPSCHEMECycleThe1}),
we can directly apply our results from Theorem~\ref{TheExtended} to
\begin{equation*}
T'_{a,b}(z,u)=\sum_{n\ge 1} T_n \E(u^{X_{n}}) \frac{z^{n-1}}{(n-1)!}=\psi(u\cdot T(z)),\qquad T'(z)=\varphi(T(z)),
\end{equation*}
for the total number of tables, and from Theorem~\ref{TheRefined} to 
\begin{equation*}
R'_{a,b}(z,v)=\sum_{n\ge 1} T_n \E(v^{X_{n,j}}) \frac{z^{n-1}}{(n-1)!}=\psi(T(z)-(1-v)z^j\frac{T_j}{j!}),
\end{equation*}
for the number of tables of size $j$.
This allows us to extend the corresponding result of~\cite{KuPa2014} to the full range of $b>-1$, 
also providing the missing identification of the limit law as a (moment-tilted) stable law:
\begin{theorem}
\label{ChineseThe}
Let $a>0$, $b> -1$. The random variable $X_{n,j}$ counting the number of branches of size $j$ in a random ${\mathcal T}_{a,b}$ tree of size $n$
(or, equivalently, the number of tables with~$j$~seated customers in a Chinese restaurant process of parameter $\alpha=1/(1+a)$ and $\beta=b/(1+a)$, with a total of $n-1$ customers) possesses the three successive asymptotic régimes of Theorem~\ref{TheRefined}, with a phase transition at $j=\Theta(n^{1/(a+2)})$:
\begin{enumerate}[(i)]
 \item For $j\ll n^{\frac{1}{a+2}}$ we have $\mppar_{n,j}=\frac{\alpha n^{\alpha}}{j} \binom{j-1-\alpha}{j-1}\to\infty$ and $\frac{X_{n,j}}{\mppar_{n,j}}$ 
 converges in distribution, with convergence of all moments, to 
 a two-parameter Mittag-Leffler distribution:
\begin{equation*}
\frac{X_{n,j}}{\mppar_{n,j}}\cmom X \qquad
\text{ with } \qquad X\law \ML(\alpha,\beta).
\end{equation*}
 \item For $j\sim r\cdot n^{\frac{1}{a+2}}$, $r\in (0,\infty)$, we have $\mppar_{n,j}\to\mppar$, and the random variable $X_{n,j}$ converges in distribution, with convergence of all moments, to a mixed Poisson distribution:
\begin{equation*}
X_{n,j}\cmom \MPo(\mppar X).
\end{equation*}
 \item For $j\gg n^{\frac{1}{a+2}}$ we have $\mppar_{n,j}\to 0$, so $X_{n,j}$ converges to a Dirac distribution at 0.
	\end{enumerate}
\end{theorem}
\begin{remark}
Our result above implies that there are only a few giant tables in the Chinese restaurant process
(a mixed-Poisson number of tables with a number of customers proportional to $n^{\frac{1}{a+2}}$).
In contrast, there are much more tables with a smaller number of customers, and
an asymptotically negligible number of tables of size $\gg n^{\frac{1}{a+2}}$.
\end{remark}


\begin{remark}
Our results also allow recovering the limit theorem in~\cite{Pitman2006} for the total number of tables $C_n$ in the Chinese restaurant process (via $X_n$), albeit with a totally different proof,
as the normalized random variable $X_{n}/n^\alpha$ converges in distribution with convergence of all moments 
to a random variable $X$, with $X$ given in the theorem before. For the reader's convenience, we state the moments in terms of $\beta$ and $\alpha$, compare with~\cite[Theorem 3.8]{Pitman2006}:
\begin{flalign*}
&&\E(X^s)=\frac{\Gamma(s+\frac{\beta}{\alpha})\Gamma(\beta)}{\Gamma(\beta+s\cdot \alpha)\Gamma(\frac{\beta}{\alpha})}. &&\myqedhere
\end{flalign*}
\end{remark}

\pagebreak

\begin{proof}[Proof of Theorem~\ref{ChineseThe}]
We follow very closely~\cite{KuPa2014} and sketch the remaining steps. We solve the differential equation $T'(z)=\varphi(T(z))$ and get
$T(z)=1-(1-(a+1)z)^{\frac{1}{a+1}}. $
Thus, the probability generating function is given by 
\begin{equation*}
 \E(v^{X_{n+1,j}})=\frac{n!}{T_{n+1}}[z^n]\psi\left(T(z)-(1-v)z^j\frac{T_j}{j!}\right),
\end{equation*}
where the coefficient $\frac{T_{n+1}}{n!}=[z^n]\psi\big(T(z)\big)$
is computed by standard singularity analysis. 
Therefore, except for the non-standard shift, we can readily apply our scheme to 
\[\psi\left(T(z)-(1-v)z^j\frac{T_j}{j!}\right).\]
Here, $T=H$ and $\psi=G$, whose Puiseux exponents are 
$\lambda_{H}=\frac{1}{a+1}$ and $\lambda_{G}=-b$ for $b\neq 0$, where $a>0$ and $b >-1$. 
Hence, the critical range is given by $j(n)=\Theta  (n^{\frac{\lambda {H}}{1+\lambda {H}}} )=\Theta  (n^{\frac{1}{a+2}} )$.

In this model, no additional factor $M(z)$ is present, so
$\lambda _{M}^{\mkern - 1. mu{{{-}}}}=0$, and if $b=0$ we
apply the cycle scheme of Theorem~\ref{COMPSCHEMECycleThe2}. This gives
\begin{equation*}
\E(X^s)=\mu_s=
\begin{cases}
 \frac{\Gamma(s+b)\Gamma(\frac{b}{a+1})}{\Gamma(b)\Gamma(\frac{b+s}{a+1})} & \text{ if } b\neq 0,\\
 \frac{\Gamma(s+1)}{\Gamma(\frac{s}{a+1}+1)} & \text{ if } b=0. 
\end{cases}
\end{equation*}
Finally, we unify both expressions by simply using $\Gamma(x+1)=x \Gamma(x)$.
\end{proof}

\begin{remark}[Chinese restaurant with a bar]
Recently, Möhle introduced in~\cite{Moehle2021Bar} a generalization of the classical Chinese restaurant process, in which in addition to the tables, the customers 
can sit at an infinite bar. 
After proper rescaling, the number of customers at the bar converges to a beta-distributed random variable, 
and the number of occupied tables to the three-parameter Mittag-Leffler distribution.
\end{remark}

\subsection{Triangular urn models and the three-parameter Mittag-Leffler distribution}\label{SubSecTriangular}
Two-colour triangular urns are instances of generalized P\'olya urn models~\cite{Jan2004,FlaDumPuy2006,M2008}. 
At each time step $n \ge 1$, a ball is drawn uniformly at random, reinserted, and depending on the observed colour, balls of both colours are added to the urn:
If a white ball was drawn, we add $a$ white and $b$ black balls, whereas, 
if a black ball was drawn, we add $c$ white and $d$ black balls.
The addition/replacement of balls can be described by the so-called \emph{ball replacement matrix} 
$M = \left(\begin{matrix} a & b \\ c & d \end{matrix}\right),$
where for \emph{balanced} urn models it holds that $a+b=c+d$, such that the total number $\sigma=a+b$ of added balls in each step is independent of the observed colour. The initial configuration of the urn consists of $w_0$ white balls and $b_0$ black balls, 
and the random variable $\mathcal{W}_n$ counts the number of white balls in the urn after $n$ draws.
For balanced urns with replacement matrix
\begin{equation*}
M = \left(\begin{matrix} a & b \\ 0 & d \end{matrix}\right), \quad a, b >0, \quad d =\sigma=a+b,
\end{equation*}
it was shown by Flajolet, Dumas, and Puyhaubert~\cite{FlaDumPuy2006} (and also by Janson~\cite{Jan2006,Jan2010} 
via different analytic methods)
that $\frac{\mathcal{W}_n}{a n^{a/\sigma}}\claw \mathcal{W}$, for a random variable with moments
\begin{equation*}\label{FlDuPu}
\E(\mathcal{W}^s)=\frac{\Gamma(\frac{b_0+w_0}{\sigma})}{\Gamma(\frac{w_0}{a})}\cdot \frac{\Gamma(s+\frac{w_0}{a})}{\Gamma(s\cdot \frac{a}{\sigma}+\frac{b_0+w_0}{\sigma})}.
\end{equation*}
The limit law thus depends on the initial composition of the urn, as it is often the case for urn models.
In the special case $(w_0,b_0) = (a,b)$, and thus $w_0+b_0= \sigma$, $\mathcal{W}$ follows a Mittag-Leffler distribution $\ML(a/\sigma)$.
For $b_0 > 0$ and either $w_0 = 0$ or $w_0 = b$, Janson observed a moment-tilted stable law, leaving the other cases open; see~\cite[Theorem~1.8 and Problem~1.15]{Jan2006}. 

\pagebreak

Let us now show how the general case 
can be solved using our extended scheme. 
The key tool is the history generating function $ F(z,u) = \sum_{n,k \geq 0} f_{n,k} u^k \frac{z^n}{n!}$ 
where $f_{n,k}$ is equal to the number of transitions (or histories) leading to a configuration with $k$ white balls after $n$ steps. 

The closed form of this history generating function was derived in~\cite[Proposition~14]{FlaDumPuy2006}:
\begin{equation*}
F(z,u)=u^{w_0}(1-\sigma z)^{-b_0/\sigma}\Big(1-u^{a}\big(1-(1-\sigma z)^{a/\sigma}\big)\Big)^{-w_0/a}.
\end{equation*}

Putting aside the prefactor $u^{w_0}$, and after a change of variable $u^a \mapsto u$, 
this equation can be interpreted as an extended critical composition scheme 
\begin{equation}\label{equation_urns_comp} F(z,u)=M(z)\cdot G\big(uH(z)\big),\end{equation}
involving the exponential generating functions with nonnegative integer coefficients
\begin{equation*}
M(z)=(1-\sigma z)^{-b_0/\sigma}, \ \ G(z)=(1-a z)^{-w_0/a},
\text{ and } H(z)=(1-(1-\sigma z)^{a/\sigma})/a. 
\end{equation*} 

The fact that the singular exponents depend on $b_0$ and $w_0$
explains en passant why the limit distribution of ${\mathcal W}_n$ differs
according to the initial composition of the urn.
Indeed, as the number of white balls at time $n$ satisfies 
\begin{equation*} \P\{ \mathcal{W}_n=a k+w_0\} =\frac{n![z^n u^k]F(z,u)}{n![z^n]F(z,1)}
=\frac{g_k}{k!} \frac{[z^n] H(z)^k M(z)}{[z^n] F(z,1)}, \end{equation*}
we can apply Theorem~\ref{TheExtended} and we then get the following limit distributions of $\mathcal{W}_n$ 
for balanced triangular urn models, completing and extending earlier results~\cite[Theorem~1.8]{Jan2006}:
\begin{coroll}
Let $\mathcal{W}_n$ be the random variable for the number of white balls in a balanced triangular P\'olya urn with initially $w_0>0$ white and $b_0\ge 0$ black balls. 
Then, we have a convergence in distribution, with convergence of all moments,
towards a three-parameter Mittag-Leffler distribution (see Definition~\ref{def:BetaMittagLeffler})
\begin{equation*}
	\frac{\mathcal{W}_n}{a n^{a/\sigma}} \cmom \BML\left(\frac{a}{\sigma},\frac{w_0}{a},\frac{b_0}{a}\right).
\end{equation*} 
\end{coroll}
\begin{remark}[Almost sure convergence and beyond]\label{remark:almostsure}
This limit was also recently identified by Goldschmidt, Haas, and Sénizergues~\cite{GoldschmidtHaasSenizergues2020} for urns with non-integer weights: 
A link with the Chinese restaurant model (for $b_0=0$) leads to a Mittag-Leffler distribution, then they show that the impact of $b_0>0$ on the process leads to a distribution with an additional beta law factor. It is interesting to stress that their approach implies an almost sure convergence.
Note that the fluctuations around this almost sure limit are known:
A second-order central limit theorem (that is, the random variable minus its almost sure limit converges, rescaled, to a Gaussian distribution), as well as a law of the iterated logarithm was obtained using a discrete martingale~\cite{KubaSulzbach2017}. What is more, following Gouet~\cite{Gouet1993}, a continuous-time reparametrization 
leads to a functional second-order limit theorem for balanced urn models. 
There is currently no systematic way to obtain an almost sure convergence for all combinatorial models covered by our composition schemes; however, in a few cases (e.g., for walks, trees, and maps), some ad-hoc clever constructions entail this almost sure convergence~\cite{Marchal2003, Marchal2010,Miermont2009,Marckert2021,RoeslerRueschendorf2001}.
\end{remark}

Note that applying Theorem~\ref{TheRefined} to the size-refined version of the
composition scheme~\eqref{equation_urns_comp}, we get factorial moments of mixed Poisson type for a size-refined random variable $X_{n,j}$
and the corresponding limit laws. However, the combinatorial interpretation of the random variable(s) $X_{n,j}$ is more involved and will be given elsewhere.

We stress the fact that the methods and results presented in this article are thus holding both for ordinary generating functions (typically used for unlabelled structures) and 
for exponential generating functions (typically used for labelled structures); see~\cite{FlaSe2009}.

This concludes the list of applications for our results on the extended and size-refined composition schemes.
We now give some extensions of our work to other schemes. 
\pagebreak

\section{Further extensions} \label{sec:outlook}
\subsection{Critical cycle scheme}
Many combinatorial structures are cycles of more basic building blocks 
(e.g., cyclic permutations or functional applications are cycles of Cayley trees).
If one marks the number of such basic building blocks, this corresponds to 
\begin{equation*}
\mathcal{F}=\mathcal{G}(\mathcal{H})=\Cyc(\mathcal{H})\quad\Longrightarrow\quad F(z,u)=-\log\big(1-u H(z)\big),
\end{equation*}
where $\mathcal{G}=\Cyc$ denotes the cycle operator. 
This scheme is analysed in Flajolet and Sedgewick's magnum opus~\cite[page 414]{FlaSe2009} in the supercritical case, 
and we now extend this analysis to the critical case (i.e., by Definition~\ref{def:critical} for $H(\rho_H)=1$). 
Note that the previous sections were assuming Puiseux-like expansions for the generating function $F(z,1)$ 
at its dominant singularity $z=\rho=\rho_H$. Now, for critical cycle schemes, $F$ does not have a Puiseux expansion, 
so the previous results need to be adapted.

Let us begin with an example: For $H(z)=1-\sqrt{1-2z}$ we get the sequence
\begin{equation*}
n![z^n]F(z) = n![z^n]\frac12\log\left(\frac{1}{1-2z}\right)=2^{n-1} (n-1)!=(2n-2)!!,\quad n\ge 1,
\end{equation*}
which starts with $1,2,8,48,384,3840,\dots$, and constitutes the entry \href{https://oeis.org/A000165}{A000165} in the \textsc{Oeis}. 
Here, the moments $\E(\fallfak{X_n}s)$ are of order $n^{s/2}$, so the scaling with $1/\sqrt{n}$ leads directly to moment convergence.
This is just one instance of the following more general result.
\smallskip

\begin{theorem}[Critical schemes with a log]
\label{COMPSCHEMECycleThe1}
In a critical cycle composition scheme 
\begin{equation} F(z,u)=-\log\big(1-u H(z)\big), \label{7Fu}\end{equation} 
if $H(z)$ has a singular exponent $0<\lambda_H<1$, 
the core size~$X_n$ (i.e., the number of $\HH$-components in structures of size $n$) has factorial moments given by 
\begin{equation*}
\E(\fallfak{X_n}s)\sim \kappa n^{s\lambda_H} \mu_s, \quad \text{ with } \qquad \kappa=\frac{1}{-c_H}
\quad \text{ and } \quad \mu_s=\frac{\Gamma(s+1)}{\Gamma(s\lambda_H+1)}.
\end{equation*}
The scaled random variable $X_n/(\kappa n^{\lambda_H})$ converges in distribution with convergence of all moments to
a Mittag-Leffler distributed random variable $X\law M_{\lambda_H}$.
\end{theorem}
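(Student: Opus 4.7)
The strategy is parallel to that of Theorem~\ref{TheExtended}, but we must accommodate the fact that $G(z)=-\log(1-z)$ is not of Puiseux-type at its singularity $z=1$; instead it carries a logarithmic singularity. Accordingly, the denominator $[z^n]F(z,1)$ will decay like $n^{-1}\rho_H^{-n}$ rather than following the generic algebraic rate~\eqref{EqSA1}, and this extra factor of $n^{-1}$ must be carefully tracked.

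First, I would compute the factorial moments explicitly from~\eqref{7Fu}. Repeated differentiation of $F(z,u)=-\log(1-uH(z))$ yields, for $s\geq 1$,
\begin{equation*}
\partial_u^s F(z,u) = (s-1)!\,\frac{H(z)^s}{(1-uH(z))^s},\qquad
\partial_u^s F(z,1) = (s-1)!\,\frac{H(z)^s}{(1-H(z))^s},
\end{equation*}
so that $\E(\fallfak{X_n}{s}) = (s-1)!\,[z^n]\bigl(H(z)^s/(1-H(z))^s\bigr)\big/[z^n]F(z,1)$.

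Next I would do the singular analysis at $z=\rho_H$. Since $H(\rho_H)=1$ (critical condition for the cycle scheme) and $H(z)=1+c_H(1-z/\rho_H)^{\lambda_H}+\dots$ with $c_H<0$ (by the sign property~\eqref{positive_c}, applied to the expansion of $H$ around an interior value), we have
\begin{equation*}
1-H(z) \sim -c_H \Bigl(1-\frac{z}{\rho_H}\Bigr)^{\lambda_H}.
\end{equation*}
Substituting into the denominator gives $F(z,1)=-\log(1-H(z)) \sim -\lambda_H \log\bigl(1-z/\rho_H\bigr)+O(1)$, so by standard singularity analysis (logarithmic case), $[z^n]F(z,1)\sim \lambda_H/(n\rho_H^n)$. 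For the numerator, $H(z)^s\to 1$ and $(1-H(z))^s \sim (-c_H)^s(1-z/\rho_H)^{s\lambda_H}$, so
\begin{equation*}
[z^n]\frac{H(z)^s}{(1-H(z))^s} \sim \frac{1}{(-c_H)^s}\cdot \frac{1}{\rho_H^n}\cdot \frac{n^{s\lambda_H-1}}{\Gamma(s\lambda_H)}.
\end{equation*}
Taking the ratio and using the identity $\Gamma(s\lambda_H+1)=s\lambda_H\,\Gamma(s\lambda_H)$, an elementary simplification produces
\begin{equation*}
\E(\fallfak{X_n}{s})\sim \frac{(s-1)!\,s\,n^{s\lambda_H}}{(-c_H)^s\,\Gamma(s\lambda_H+1)} = \kappa^s\,n^{s\lambda_H}\,\frac{\Gamma(s+1)}{\Gamma(s\lambda_H+1)},
\end{equation*}
which is exactly the claimed asymptotic.

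Finally, since $\E(X_n^s)\sim \E(\fallfak{X_n}{s})$ via the Stirling transform~\eqref{COMPSCHEMEconversion}, the rescaled variable $X_n/(\kappa n^{\lambda_H})$ has moments converging to $\mu_s=\Gamma(s+1)/\Gamma(s\lambda_H+1)$, which is precisely the moment sequence of the Mittag-Leffler distribution $M_{\lambda_H}$ (see Example~\ref{ex:positivestable}). Since $0<\lambda_H<1$, Stirling's formula~\eqref{StirlingGamma} yields $\mu_s^{-1/(2s)}\gtrsim s^{(\lambda_H-1)/2}$, so Carleman's condition~\eqref{eq:Carleman} is satisfied and $M_{\lambda_H}$ is moment-determinate. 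The Fréchet--Shohat theorem then gives convergence in distribution with convergence of all moments. The only mildly delicate step is the treatment of the logarithmic denominator, which shifts the exponent and explains how the rescaling $\kappa^s n^{s\lambda_H}$ arises with $\kappa=1/(-c_H)$ rather than the $\tau_H/(-c_H)$ of the Puiseux case (since here $\tau_H=H(\rho_H)=1$).
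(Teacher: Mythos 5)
Your proof is correct and follows essentially the same route as the paper's: both exploit that differentiating the logarithm turns $G$ into a quasi-inverse (so the extended-scheme analysis with $\lambda_G=-1$ applies to the numerator), handle the denominator $[z^n]F(z,1)\sim \lambda_H/(n\rho_H^n)$ via the logarithmic transfer theorem, and conclude by Carleman and Fr\'echet--Shohat. Your explicit closed form $\partial_u^s F=(s-1)!\,H^s/(1-uH)^s$ makes the cancellation leading to the \emph{untilted} Mittag-Leffler moments transparent, and correctly produces the normalization $\kappa^s n^{s\lambda_H}\mu_s$ consistent with the stated convergence of $X_n/(\kappa n^{\lambda_H})$ (the single $\kappa$ in the theorem's displayed moment formula is evidently a typo).
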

\begin{remark}
\label{LambdaGNull}
Observe that this scheme leads to a distribution similar (except for a shift in the moments) to the one obtained
for the scheme involving the sequence operator $\mathcal{G}=\Seq$, i.e., $G(z)=\frac{1}{1-z}$, for which one has $c_G=1$, $\lambda_G=-1$, and $\rho_G=1$. 
Alternatively, we may think of this cycle scheme as the limit case of Theorem~\ref{TheExtended} when $\lambda_G\to 0$.
Indeed, for $\lambdaM=0$ the moments~\eqref{MomentBetaStable} of the extended composition scheme can be rewritten into
\begin{equation*}
\E(X^s)=\frac{\Gamma(s-\lambda_G+1)\Gamma(-\lambda_G\lambda_H+1)}{\Gamma(s\lambda_H-\lambda_G\lambda_H+1)\Gamma(-\lambda_G+1)}. 
\end{equation*}
Thus, for $\lambda_G\to 0$ the moments of the random variable $X$ converge to the moments of an ordinary Mittag-Leffler distribution; see Definition~\ref{ex:positivestable}.
Similarly, taking the limit $\lambda_G\to 0$ in Remark~\ref{CoExtended1} gives $\lambda_G=0$ as the tilting parameter, resulting again in the ordinary Mittag-Leffler distribution.
\end{remark}

Now, for $j \in \N$, we can also look at the size-refined scheme 
\begin{equation*}
\mathcal{F}=\Cyc(v\mathcal{H}_{=j} + \mathcal{H}_{\neq j}),
\end{equation*} for which we get the following theorem.
\pagebreak

\begin{theorem}[Size-refined critical schemes with a log]
\label{COMPSCHEMECycleThe2}
In the size-refined critical cycle composition scheme
\begin{equation}
F(z,v)=-\log\Big(1 - \big(H(z) - (1-v) h_j z^j\big)\Big),\quad j\in\N,\label{7Fv} 
\end{equation} if $H(z)$ has a singular exponent $0<\lambda_H<1$, 
then the number $X_{n,j}$ of $\HH$-components of size~$j$ in structures of size $n$ has factorial moments of mixed Poisson type, 
\begin{equation*}
	\E(\fallfak{X_{n,j}}s)=\mppar_{n,j}^s\cdot \mu_s \cdot (1+o(1)),
\end{equation*}
with $\mppar_{n,j}=\frac{\rho_H^{j}}{-c_H} h_j n^{\lambda_H}$ and Mittag-Leffler mixing distribution $X\law M_{\lambda_H}$.
The random variable $X_{n,j}$ converges to one of the three limit laws given in Theorem~\ref{TheRefined},
depending on whether $j=j(n)$ is smaller, equal, or larger than the critical growth range $j=\Theta(n^{\frac{\lambda_H}{1+\lambda_H}})$.
\end{theorem}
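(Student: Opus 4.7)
The plan is to mirror closely the proof strategy of Theorem~\ref{TheRefined}, but with the ordinary (algebraic) singularity analysis of $F(z,1)$ replaced by the analysis of its logarithmic singularity at $\rho_H$. The first step is to exploit the simple structure of the outer function $G(w)=-\log(1-w)$, for which $G^{(s)}(w)=(s-1)!/(1-w)^s$ for $s\geq 1$. Writing $u(z,v)=H(z)-(1-v)h_jz^j$, so that $\partial_v u=h_jz^j$, iteration gives the clean closed form
\begin{equation*}
\partial_v^s F(z,v)\big|_{v=1}=\frac{(s-1)!\,(h_jz^j)^s}{(1-H(z))^s},\qquad s\geq 1.
\end{equation*}
Hence, as in~\eqref{COMPSCHEMEfallfak}, the factorial moments admit the exact expression
\begin{equation*}
\E(\fallfak{X_{n,j}}s)=h_j^s\,(s-1)!\,\frac{[z^{n-js}]\,(1-H(z))^{-s}}{[z^n]F(z,1)}.
\end{equation*}

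The second step is a twofold singularity analysis. Since the scheme is critical, $H(\rho_H)=1$ and $1-H(z)\sim -c_H(1-z/\rho_H)^{\lambda_H}$ near $z=\rho_H$. Therefore $F(z,1)=-\log(1-H(z))$ has the expansion
\begin{equation*}
F(z,1)=-\lambda_H\log\left(1-\frac{z}{\rho_H}\right)-\log(-c_H)+o(1),
\end{equation*}
so that standard logarithmic singularity analysis~\cite[Chapter VI]{FlaSe2009} yields
\begin{equation*}
[z^n]F(z,1)\sim \frac{\lambda_H}{\rho_H^n\, n}.
\end{equation*}
On the other hand, $(1-H(z))^{-s}\sim(-c_H)^{-s}(1-z/\rho_H)^{-s\lambda_H}$, whose coefficients are extracted in the usual algebraic way; for $j=o(n)$ one has $(n-js)^{s\lambda_H-1}\sim n^{s\lambda_H-1}$, which gives
\begin{equation*}
[z^{n-js}](1-H(z))^{-s}\sim \frac{(-c_H)^{-s}}{\rho_H^{n-js}}\cdot\frac{n^{s\lambda_H-1}}{\Gamma(s\lambda_H)}.
\end{equation*}

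Combining these asymptotics, using $(s-1)!/(\lambda_H\Gamma(s\lambda_H))=\Gamma(s+1)/\Gamma(s\lambda_H+1)$, and recognizing the factor $h_j^s(-c_H)^{-s}\rho_H^{js}n^{s\lambda_H}=\theta_{n,j}^s$, one obtains
\begin{equation*}
\E(\fallfak{X_{n,j}}s)\sim\theta_{n,j}^s\cdot\frac{\Gamma(s+1)}{\Gamma(s\lambda_H+1)},
\end{equation*}
which is of mixed Poisson type with Mittag-Leffler mixing distribution $X\law M_{\lambda_H}$ (see Example~\ref{ex:positivestable}). The moment sequence $(\mu_s)$ of $M_{\lambda_H}$ satisfies Carleman's condition~\eqref{eq:Carleman} since $0<\lambda_H<1$ (the same Stirling estimate used in Theorem~\ref{TheExtended} applies verbatim), so Lemma~\ref{COMPSCHEMElemmaMixedPoisson} immediately transfers the trichotomy: $\theta_{n,j}\to\infty$, $\theta_{n,j}\to\rho\in(0,\infty)$, or $\theta_{n,j}\to 0$. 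Finally, substituting the expansion~\eqref{COMPSCHEMEExpanHj} of $h_j$ into $\theta_{n,j}$ yields $\theta_{n,j}\sim n^{\lambda_H}/(j^{\lambda_H+1}\Gamma(-\lambda_H))$, which identifies the critical range as $j=\Theta(n^{\lambda_H/(1+\lambda_H)})$, exactly as in Theorem~\ref{TheRefined}.

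The only real subtlety compared with the proof of Theorem~\ref{TheRefined} is the replacement of the algebraic singularity of $F(z,1)$ by a logarithmic one; however, this is benign, since singularity analysis handles logarithms as cleanly as powers, and the outer gamma factor $\Gamma(-\lambda_G)$ appearing in the denominator of the extended scheme is here replaced by the factor $\lambda_H$ coming from the coefficient of the logarithm, so that the final moment sequence cleanly collapses to the pure Mittag-Leffler form. This logarithmic case can in fact be viewed formally as the limit $\lambda_G\to 0$ of Theorem~\ref{TheRefined}, in the spirit of Remark~\ref{LambdaGNull}, which makes the consistency of the two results transparent.
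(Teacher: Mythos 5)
Your proof is correct and follows essentially the same route as the paper, which reduces the cycle scheme to the algebraic case via the observation $\partial_u^s\log\big(\tfrac{1}{1-u}\big)=\partial_u^{s-1}\tfrac{1}{1-u}$ and then appeals to Theorem~\ref{TheRefined} with $G(z)=\tfrac{1}{1-z}$. You carry out that reduction explicitly — writing $G^{(s)}(w)=(s-1)!/(1-w)^s$, doing the logarithmic singularity analysis of $[z^n]F(z,1)$ by hand, and simplifying $(s-1)!/(\lambda_H\Gamma(s\lambda_H))=\Gamma(s+1)/\Gamma(s\lambda_H+1)$ to land cleanly on the Mittag-Leffler moments — which makes the argument more self-contained than the paper's terse deferral, but it is the same approach and all steps check out.
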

\begin{proof}[Proofs of Theorem~\ref{COMPSCHEMECycleThe1} and~\ref{COMPSCHEMECycleThe2}]
The proofs are analogous to those of Theorems~\ref{TheExtended} and~\ref{TheRefined}, and we only point out the differences next.
Let us start with the proof of Theorem~\ref{COMPSCHEMECycleThe1}. 
The factorial moments of order $s$ satisfy
\begin{equation*}
\E(\fallfak{X_n}s)
=\frac{[z^n] \partial_u^s (F)(z,1)}{[z^n]F(z,1)},
\end{equation*}
where $F$ is defined by Equation~\eqref{7Fu}.
As the scheme is critical (i.e.~$H(\rho_H)=1$), one has 
\begin{align*}
F(z,1)=-\log(1-H(z))&= -\log\Big((1-z/\rho_H)^{\lambda_H}(1+o(1))\Big)
\sim 
-\lambda_H\cdot \log\Big(1-z/\rho_H\Big).
\end{align*}
Using the transfer theorems of~\cite{FlaSe2009} we directly obtain 
$[z^n] F(z,1) \sim \lambda_H \smash{\frac{\rho_h^{-n}}{n}}.$
It remains to compute $\partial_u^{s} F$. 
Note that the log function can be replaced by a quasi-inverse using 
\begin{equation*}
\partial_u^{s} \log \left( \frac{1}{1-u} \right) = \partial_u^{s-1}\frac{1}{1-u}.
\end{equation*}
Thus, the $s$th factorial moment is obtained from the asymptotics in~\eqref{eq:pureasymptmoments} computed for $s-1$ and with $G(z)=\frac{1}{1-z}$. 
Then, we obtain the final result:
the normalized moments converge to the moments of a Mittag-Leffler distribution.

For the proof of Theorem~\ref{COMPSCHEMECycleThe2} one replaces~\eqref{7Fu} by~\eqref{7Fv} and $\partial_u^{s}F$ by~$\partial_v^{s} F$.
\end{proof}

\subsection{Multivariate critical composition schemes}
It is possible to generalize the critical composition scheme by looking at combinatorial constructions of the form
\begin{equation*}
\mathcal{F}=\mathcal{M}\times\mathcal{G}_1\big(\mathcal{H}_1\big)\times \mathcal{G}_2 \big(\mathcal{H}_2 \big) \times \dots \times \mathcal{G}_m \big(\mathcal{H}_m\big)
= \mathcal{M}\times \prod_{\ell=1}^m \mathcal{G}_{\ell} \big(\mathcal{H}_{\ell}\big).
\end{equation*}
We measure the size of the $\mathcal{G}_\ell$ component by the variable $u_{\ell}$; accordingly this gives 
\begin{equation}\label{MVF}
F(z,u_1,\dots, u_m) = M(z) \cdot \prod_{\ell=1}^{m}G_{\ell}\big(u_{\ell}H_{\ell}(z)\big).
\end{equation}
The random vector $\mathbf{X}_n=(X_{n,1},\dots, X_{n,m})$ measures the sizes of the $\mathcal{G}_\ell$-components,
\begin{equation*}
\P\{X_{n,1}=k_1,\dots, X_{n,m}=k_m\}=\frac{[z^n\, u_1^{k_1}\dots u_m^{k_m}]F(z,u_1,\dots,u_m)}{[z^n]F(z,1,\dots,1)}.
\end{equation*}
\pagebreak

We now introduce a suitable extension of the terms \textit{critical} (Definition~\ref{def:critical}) and \textit{pure} (Definition~\ref{def:pure}) for multivariate schemes. 
We call a multivariate scheme \emph{critical} if all functions $H_{\ell}(z)$ have the identical radius of convergence $\rho_{H_{\ell}}=\rho_H$ such that $\tau_{\ell}:=H_{\ell}(\rho_H)=\rho_{G_{\ell}}$ and $M(z)$ has radius of convergence $\rho_M\geq\rho_H$.
We call a multivariate scheme \emph{pure} if
\begin{itemize}
\item $H_{\ell}(z)$ has a singular exponent $0<\lambda_{H_\ell}<1$ for $1\le \ell \le m$; 
\item $G_{\ell}(z)$ has a singular exponent $\lambda_{G_{\ell}}<0$ for $1\le \ell \le m$;
\item $M(z)$ has a singular exponent $\lambda_M\le 0$ \emph{or} $M(z)$ is analytic at $\rho_H$.
\end{itemize}
We can now state our multivariate result.

\begin{theorem}
\label{TheMV}
In a multivariate pure extended critical composition scheme~\eqref{MVF}, 
the joint moments of the random vector $\mathbf{X}_n=(X_{n,1},\dots, X_{n,m})$
satisfy
\begin{align*}
\E(X_{n,1}^{s_1}\dots X_{n,m}^{s_m})\sim \mu_{s_1,\dots,s_m} \prod_{\ell=1}^m n^{s_{\ell}\lambda_{H_{\ell}}}\kappa_{\ell}^{s_{\ell}},
\end{align*}
with $\kappa_{\ell}=\smash{\frac{\tau_{H_{\ell}}}{-c_{H_{\ell}}}}$ and $\mu_{s_1,\dots,s_m}$ given by
\begin{equation}\label{eq:momDir}
\mu_{s_1,\dots,s_m}=\frac{\Gamma\big(-\sum_{\ell=1}^{m}\lambda_{G_{\ell}}\lambda_{H_{\ell}}-\lambda_M\big)}{\Gamma\big(\sum_{\ell=1}^{m}s_{\ell}\lambda_{H_{\ell}}-\sum_{\ell=1}^{m}\lambda_{G_{\ell}}\lambda_{H_{\ell}}-\lambda_M\big)}\prod_{\ell=1}^{m}\frac{\Gamma(s_{\ell}-\lambda_{G_{\ell}})}{\Gamma(-\lambda_{G_{\ell}})}.
\end{equation}
Consequently, one gets a convergence in distribution and in moments
\begin{equation*} \left( \frac{X_{n,1}}{\kappa_1 n^{\lambda_1}},\dots,\frac{X_{n,m}}{\kappa_m n^{\lambda_m}} \right) \cmom \mathbf{X}, \end{equation*}
where $\mathbf{X}$ is determined by its joint moment sequence $\mu_{\mathbf{s}}=\mu_{s_1,\dots,s_m}$. 
Moreover, the random vector $\mathbf{X}$ has a scaled Dirichlet-stable product distribution,
\begin{equation}\label{MLRV}
\mathbf{X}=(X_1,\dots,X_m)\law (V_{1}\cdot W_1^{\lambda_{H_{1}}},\dots,V_{m}\cdot W_m^{\lambda_{H_{m}}}),
\end{equation}
where $\mathbf{W}=(W_1,\dots,W_m,W_{m+1})$ follows a Dirichlet distribution 
\begin{equation*} 
\mathbf{W} \law \Dir(-\lambda_{G_{1}}\lambda_{H_{1}},\dots,-\lambda_{G_{m}}\lambda_{H_{m}},-\lambda_M),
\end{equation*}
and where the $V_\ell$'s, for $1\le\ell\le m$, are $m$ independent two-parameter Mittag-Leffler distributions 
$V_{\ell}\law \ML(\lambda_{H_{\ell}}, -\lambda_{G_{\ell}}\lambda_{H_{\ell}})$, 
also independent of $\mathbf{W}$.
\end{theorem}

\begin{proof}We proceed similarly to the proof of the first part of Theorem~\ref{TheExtended}. First, the mixed factorial moments of $\mathbf{X}_n$, which are obtained by differentiation and extraction of coefficients:
\begin{equation*}
\E\big(\fallfak{X_{n,1}}{s_1}\cdots \fallfak{X_{n,m}}{s_m}\big)=\frac{[z^n]\partial_{u_1}^{s_1}\dots \partial_{u_m}^{s_m}(F)(z,1,\dots, 1)}{[z^n]F(z,1,\dots, 1)}.
\end{equation*}
The differentiation with respect to $u_{\ell}$ only affects the factor $G_{\ell}\big(u_{\ell}H_{\ell}(z)\big)$, 
leading to a singular expansion covered in Section~\ref{sec:prelimsingular}.
Extraction of coefficients then gives an asymptotic expansion of the factorial moments. Converting all the factorial moments into moments using~\eqref{COMPSCHEMEconversion} gives the desired asymptotics and moments in~\eqref{eq:momDir}. 

It remains to identify the distribution. 
To this aim, note that a Dirichlet distributed random vector $(W_1,\dots,W_{m+1})\law\Dir(a_1,\dots,a_{m+1})$ with positive parameters $a_1,\dots,a_{m+1}$
has a 
density function 
supported on the $m$ simplex 
$\{(x_1,\dots,x_{m+1}) \in {\mathbb R}_{\geq 0}^{m+1} ~|~ \sum_{j=1}^{m+1}x_j=1 \}$:
\begin{equation*}
f(x_1,\dots,x_{m+1})=\frac{\Gamma(\sum_{j=1}^{m+1}a_j\big)}{\prod_{j=1}^{m+1}\Gamma(a_j)}
 \prod_{j=1}^{m+1}x_j^{a_j-1}.
\end{equation*}
Accordingly, the joint moments are given by
\begin{equation*}
\E\big(W_1^{s_1}\cdots W_{m+1}^{s_{m+1}}\big)
= \frac{\Gamma(\sum_{j=1}^{m+1}a_j)}{\Gamma\big(\sum_{j=1}^{m+1}(s_j+a_j)\big)}
\frac{\prod_{j=1}^{m+1}\Gamma(s_j+a_j)}{\prod_{j=1}^{m+1}\Gamma(a_j)}.
\end{equation*}

Now, consider a random vector $(Z_1,\dots,Z_m)$ satisfying
\begin{equation*}
(Z_1,\dots,Z_m)\law (V_{1}\cdot W_1^{\alpha_1},\dots,V_{m}\cdot W_m^{\alpha_m})
\end{equation*}
with two-parameter Mittag-Leffler distributions $V_{\ell}\law \ML(\alpha_{\ell},a_{\ell})$ for ${\ell}=1,\dots,m$, such that all random variables are mutually independent.
Using the closed form~\eqref{momgenML} we get
\begin{flalign*}&&
\E\big(Z_1^{s_1}\cdots Z_m^{s_m}\big)
=\frac{\Gamma(\sum_{\ell=1}^{m+1}a_{\ell})}{\Gamma\big(a_{m+1}+\sum_{\ell=1}^{m}(\alpha_{\ell} s_{\ell}+a_{\ell})\big)}
\prod_{\ell=1}^m\frac{\Gamma(s_{\ell}+a_{\ell}/\alpha_{\ell}) }{\Gamma(a_{\ell}/\alpha_{\ell})}.
&& 
\end{flalign*}
Comparing this expression with the moments~\eqref{eq:momDir}, the claim follows.
\end{proof}

\begin{remark}
The marginals $X_{\ell}$ of the random vector $\mathbf{X}$ are also covered by Theorem~\ref{TheExtended}. The random vector $\mathbf{X}$ is closely related to Poisson--Dirichlet distributions $\text{PD}(\alpha,\beta)$, \cite{James2013,James2015,PY1997}
and the joint limit law of node degrees in preferential attachment trees or generalized plane-oriented recursive trees~\cite{Mor2005}; see also the subsequent example.
\end{remark}

Now, if one considers the multivariate size-refined scheme
\begin{equation*}
\mathcal{F}=\mathcal{M}\times \prod_{\ell=1}^m \mathcal{G}_{\ell}\big(\mathcal{H}_{\ell,\neq j_{\ell}} + v_{\ell}\mathcal{H}_{\ell,= j_{\ell}}\big),
\end{equation*}
one gets the following multivariate version of Theorem~\ref{TheRefined}.

\begin{theorem}[Multivariate pure size-refined critical scheme]
\label{TheMVRefined}
In a multivariate pure size-refined critical composition scheme 
\begin{equation*}
F(z,v_1,\dots, v_m)=M(z)\cdot\prod_{\ell=1}^{m}G_{\ell}\big(H_\ell(z) - (1-v_\ell)h_{\ell,j_\ell} z^{j_\ell}\big)
\end{equation*}
the random variables $X_{n,\ell,j_\ell}$, which count the number of $H_\ell$-components of size $j_\ell$, have joint factorial moments of mixed Poisson type:
{%
\begin{equation*}
	\E\big(\fallfak{X_{n,1,j_1}}{s_1}\dots\fallfak{X_{n,m,j_m}}{s_m}\big)=\mu_{s_1,\dots,s_m}\cdot\prod_{\ell=1}^m\mppar_{n,\ell,j_\ell}^{s_\ell}\cdot (1+o(1)),
\end{equation*}}%
with $\mppar_{n,\ell,j_\ell}=\frac{\rho_{H_\ell}^{j_\ell}}{-c_{H_\ell}} h_{\ell,j_\ell} n^{\lambda_{H_\ell}}$ and joint mixing distribution $\mathbf{X}=(X_1,\dots,X_m)$ as in Equation~\eqref{MLRV}.
\smallskip

Let $X_\ell$, for $1\le \ell \le m$, denote the marginal distribution of the $\ell$th coordinate of $\mathbf{X}=(X_1,\dots,X_m)$.
For $n\to\infty$, the limiting distributions of $X_{n,\ell,j_\ell}$ jointly undergo mixed Poisson type phase transitions with mixing distributions $X_\ell$. The phase transitions depend on the growth of $j_\ell=j_\ell(n)$, with critical growth ranges given by $j_\ell=j_\ell(n)=\Theta(n^{\frac{\lambda_{H_\ell}}{1+\lambda_{H_\ell}}})$. 

In particular, for $j_\ell(n)\sim \mppar_\ell\cdot n^{\frac{\lambda_{H_\ell}}{1+\lambda_{H_\ell}}}$, 
the random vector $\mathbf{X}_{n,\mathbf{j}}=( X_{n,1,j_1},\dots,X_{n,m,j_m})$ converges
in distribution with convergence of all (factorial) moments to a multivariate 
distribution $\MPo(\boldsymbol{\mppar}\mathbf{X})$. 
\end{theorem}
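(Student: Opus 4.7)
The plan is to mimic the univariate proof of Theorem~\ref{TheRefined}, taking advantage of the product structure of $F(z,v_1,\dots,v_m)$ to reduce the multivariate computation to essentially single-variable singularity analysis. The joint factorial moments are obtained by repeated differentiation:
\begin{equation*}
\E\big(\fallfak{X_{n,1,j_1}}{s_1}\cdots\fallfak{X_{n,m,j_m}}{s_m}\big)
=\frac{[z^n]\,\partial_{v_1}^{s_1}\cdots\partial_{v_m}^{s_m}(F)(z,1,\dots,1)}{[z^n]F(z,1,\dots,1)}.
\end{equation*}
Since each variable $v_\ell$ occurs in a single factor of the product~\eqref{MVF}, the partial derivatives decouple and the numerator simplifies to
\begin{equation*}
M(z)\prod_{\ell=1}^m h_{\ell,j_\ell}^{s_\ell}\,z^{s_\ell j_\ell}\,G_\ell^{(s_\ell)}\!\big(H_\ell(z)\big).
\end{equation*}

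Next, I would plug in the singular expansions of $M$, $H_\ell$, and $G_\ell^{(s_\ell)}$ from Section~\ref{sec:prelimsingular} (in particular using the singular differentiation formula already employed in the proofs of Theorems~\ref{TheExtended} and~\ref{TheRefined}). Since all the $H_\ell$'s share the radius $\rho_H$, the product has a single dominant singularity at $\rho_H$ with singular exponent
\begin{equation*}
\sum_{\ell=1}^m\bigl(\lambda_{G_\ell}\lambda_{H_\ell}-s_\ell\lambda_{H_\ell}\bigr)+\lambda_M,
\end{equation*}
and the asymptotic constant factorizes as a product of the contributions of each $(-c_{H_\ell})^{\lambda_{G_\ell}-s_\ell}$, together with one overall constant from $M$. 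Singularity analysis then gives the asymptotics of the coefficient at $[z^{n-\sum_\ell s_\ell j_\ell}]$; dividing by the previously computed $f_n$ (analogous to~\eqref{ExpansionF2}) and using $(-1)^{s_\ell}\fallfak{\lambda_{G_\ell}}{s_\ell}=\Gamma(s_\ell-\lambda_{G_\ell})/\Gamma(-\lambda_{G_\ell})$, the combinatorial pieces assemble into precisely $\prod_\ell\theta_{n,\ell,j_\ell}^{s_\ell}\cdot\mu_{s_1,\dots,s_m}$, with $\mu_{s_1,\dots,s_m}$ matching the joint moment sequence in Theorem~\ref{TheMV}. For $j_\ell=j_\ell(n)$ growing with $n$ we substitute the asymptotics~\eqref{COMPSCHEMEExpanHj} of $h_{\ell,j_\ell}$, otherwise we just read off the (fixed) coefficient; the critical range $j_\ell=\Theta(n^{\lambda_{H_\ell}/(1+\lambda_{H_\ell})})$ for $\theta_{n,\ell,j_\ell}$ to converge to a finite positive constant follows as in Theorem~\ref{TheRefined}.

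It remains to upgrade moment asymptotics to a joint weak limit. For this I would use the natural multivariate extension of Lemma~\ref{COMPSCHEMElemmaMixedPoisson}: if a sequence of $m$-dimensional random vectors has joint factorial moments of the form $\prod_\ell\theta_{n,\ell}^{s_\ell}\mu_{\mathbf{s}}$ with $\mu_{\mathbf{s}}=\E(X_1^{s_1}\cdots X_m^{s_m})$ for some vector $\mathbf{X}$ whose joint moment problem is determinate, then after suitable rescaling (or in the balanced case $\theta_{n,\ell}\to\rho_\ell\in(0,\infty)$) one obtains convergence to the corresponding (multivariate) mixed Poisson distribution. Theorem~\ref{TheMV} already identifies the mixing vector $\mathbf{X}$ as a Dirichlet--stable product, and Carleman-type bounds on each marginal (using $0<\lambda_{H_\ell}<1$ as in the proof of Theorem~\ref{TheExtended}) are inherited by the product and so ensure determinacy of the joint moment problem.

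The main obstacle I anticipate is precisely this last step: establishing the joint multivariate mixed Poisson limit theorem from the asymptotic joint factorial moments in a form strong enough to simultaneously cover the three regimes $\theta_{n,\ell,j_\ell}\to\infty$, $\to\rho_\ell\in(0,\infty)$, and $\to 0$ coordinate by coordinate. This requires a moment-problem argument in several variables (either via the Fr\'echet--Shohat theorem applied to arbitrary linear combinations of the coordinates, or via the Cram\'er--Wold device combined with the univariate Theorem~\ref{TheRefined}) and a careful verification that the Dirichlet--stable product mixing distribution is indeed determined by its joint moments. All remaining computations (the singular expansions, the combinatorial identification of the constants, and the identification of the critical growth ranges) are straightforward extensions of those already carried out in the univariate proofs.
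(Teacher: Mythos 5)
Your proposal follows the same route as the paper: compute the joint factorial moments by repeated $\partial_{v_\ell}$-differentiation, exploit the fact that each $v_\ell$ sits in a single factor of the product so the derivatives decouple, plug in the singular expansions from Section~\ref{sec:prelimsingular} (in particular \eqref{COMPSCHEMEExpanHj} and the analogue of \eqref{COMPSCHEMEExpanGAbleit}), extract $[z^{n-\sum_\ell s_\ell j_\ell}]$, and then invoke the multivariate extension of Lemma~\ref{COMPSCHEMElemmaMixedPoisson}. That multivariate lemma is precisely Lemma~\ref{COMPSCHEMElemmaMixedPoissonMV}, which the paper states and proves separately and then simply cites. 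The only place you diverge is the final conversion from joint factorial-moment asymptotics to the joint weak limit: you flag this as the ``main obstacle'' and propose Cram\'er--Wold or Fr\'echet--Shohat on linear combinations, both of which would be awkward here because some coordinates converge after rescaling (continuous limit) while others converge without rescaling (discrete mixed Poisson limit), and linear combinations mixing the two regimes do not reduce cleanly to a one-dimensional moment problem. The paper sidesteps this by a more tailored device: partition the index set into $C$ (indices with $\theta_{n,k}\to\infty$) and $D$ (indices with $\theta_{n,k}\to\rho_k$) and track the \emph{mixed} sequence of raw moments on $C$ and factorial moments on $D$, which converges to the corresponding mixed raw/factorial moments of the claimed limit vector; determinacy then follows from the Carleman bounds on each marginal as you correctly note. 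So your plan is essentially right, but if you pursue Cram\'er--Wold you should expect trouble and would do better to prove the hybrid-moment lemma directly.
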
	
\pagebreak

For properties of multivariate mixed Poisson distributions we refer to~\cite{FLT2004} or~\cite{KuPa2014}.
The key tool for proving Theorem~\ref{TheMVRefined} is the following multivariate extension of Lemma~\ref{COMPSCHEMElemmaMixedPoisson}.

\begin{lem}[Joint factorial moments and limit laws of mixed Poisson type]
\label{COMPSCHEMElemmaMixedPoissonMV}
Let $(\mathbf{X}_n)_{n\in\N}$ denote a sequence of $m$-dimensional random vectors, whose factorial moments are asymptotically of \emph{mixed Poisson type}, i.e., they satisfy for $n \to \infty$ the asymptotic expansion
\begin{equation*}
\E(\fallfak{\mathbf{X}_n}{\mathbf{s}})=
\E\big(\fallfak{X_{n,1}}{s_1}\dots\fallfak{X_{n,m}}{s_m}\big)
=\mu_{s_1,\dots,s_m}\cdot\prod_{\ell=1}^m\mppar_{n,\ell}^{s_\ell}\cdot (1+o(1)),\quad s\ge 1,
\end{equation*}
with $\mu_{s_1,\dots,s_m}\ge 0$, and $\mppar_{n,\ell}>0$ for $1\le \ell \le m$. Furthermore, assume that the sequence of 
joint moments $(\mu_{\mathbf{s}})_{\mathbf{s}\in\N^m}$ determines a unique distribution 
$\mathbf{L}=(L_1,\dots,L_m)$. Then, for $n\to\infty$, one has the following joint limit distributions:
\begin{itemize}
\item[(i)] If $\mppar_{n,\ell}\to\infty$, the random variable $\frac{X_{n,\ell}}{\mppar_{n,\ell}}$ converges in distribution, with convergence of all moments, to $L_\ell$. 

\item[(ii)] If $\mppar_{n,\ell}\to\mppar\in (0,\infty)$, the random variable $X_{n,\ell}$ converges in distribution, with convergence of all moments, to 
$Y\law \MPo(\mppar L_\ell)$.

\item[(iii)] If $\mppar_{n,\ell}\to 0$, 
$X_{n,\ell}$ converges to a Dirac distribution: $X_{n,\ell}\claw 0$.
\end{itemize}
\end{lem}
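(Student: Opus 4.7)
The plan is to extend the univariate proof of Lemma~\ref{COMPSCHEMElemmaMixedPoisson} to the multivariate setting via the method of (joint) factorial moments and a multivariate version of the Fréchet--Shohat theorem. The key observation is that joint raw moments and joint factorial moments are related by a product of Stirling transforms:
\begin{equation*}
	\E\big(X_{n,1}^{s_1}\cdots X_{n,m}^{s_m}\big)
	= \sum_{k_1=0}^{s_1}\cdots\sum_{k_m=0}^{s_m} \prod_{\ell=1}^m \Stir{s_\ell}{k_\ell}\cdot \E\big(\fallfak{X_{n,1}}{k_1}\cdots \fallfak{X_{n,m}}{k_m}\big),
\end{equation*}
so asymptotic information on joint factorial moments translates directly into asymptotic information on joint raw moments. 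Throughout, the assumption that $(\mu_{\mathbf{s}})_{\mathbf{s}\in\N^m}$ determines the distribution $\mathbf{L}$ uniquely will (together with a Carleman-type criterion on the marginals, inherited from Lemma~\ref{COMPSCHEMElemmaMixedPoisson}) allow us to convert joint moment convergence into joint convergence in distribution.

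For case (i), assuming $\theta_{n,\ell}\to\infty$ for every $\ell$, consider $\mathbf{Y}_n=(X_{n,1}/\theta_{n,1},\dots,X_{n,m}/\theta_{n,m})$. Since $\fallfak{X_{n,\ell}}{s_\ell}/\theta_{n,\ell}^{s_\ell}$ has the same leading asymptotics as $X_{n,\ell}^{s_\ell}/\theta_{n,\ell}^{s_\ell}$ (the lower-order Stirling terms being absorbed by the divergence of $\theta_{n,\ell}$), the joint moments of $\mathbf{Y}_n$ converge to $\mu_{s_1,\dots,s_m}$, which are the joint moments of $\mathbf{L}$; the multivariate Fréchet--Shohat theorem then gives joint convergence in distribution with convergence of all moments. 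For case (ii), with $\theta_{n,\ell}\to\rho_\ell\in(0,\infty)$, the joint factorial moments of $\mathbf{X}_n$ converge to $\prod_{\ell=1}^m \rho_\ell^{s_\ell}\cdot \mu_{s_1,\dots,s_m}$; but by definition of the multivariate mixed Poisson $\MPo(\boldsymbol{\rho}\mathbf{L})$, in which coordinates are conditionally independent Poissons $\Po(\rho_\ell L_\ell)$ given $\mathbf{L}$, these are precisely its factorial moments:
\begin{equation*}
	\E\big(\fallfak{Y_1}{s_1}\cdots \fallfak{Y_m}{s_m}\big)
	= \E\Big(\prod_{\ell=1}^m (\rho_\ell L_\ell)^{s_\ell}\Big)
	= \Big(\prod_{\ell=1}^m \rho_\ell^{s_\ell}\Big)\mu_{s_1,\dots,s_m}.
\end{equation*}
Again Fréchet--Shohat concludes. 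For case (iii), $\theta_{n,\ell}\to 0$ implies $\E(X_{n,\ell})\sim \theta_{n,\ell}\mu_{\vec{e}_\ell}\to 0$, so the Markov inequality yields $\P\{X_{n,\ell}\ge 1\}\le \E(X_{n,\ell})\to 0$, hence $X_{n,\ell}\claw 0$.

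The main obstacle is the genuinely \emph{mixed} regime, where different coordinates lie in different asymptotic ranges simultaneously. Here one combines the three mechanisms coordinatewise: rescale the diverging coordinates, leave the ones with $\theta_{n,\ell}\to\rho_\ell$ untouched, and note that the degenerating coordinates become (in probability) equal to~$0$ and can be removed by projecting onto the remaining entries, after which the moment-convergence argument above applies. The only delicate point is to verify that moments of the mixed limit (a vector with some components scaled stable-type, some mixed Poisson, some identically zero) still uniquely determine the joint law; this follows from uniqueness of the $\mu_{\mathbf{s}}$'s, since marginalization, Poisson mixing, and scaling all preserve moment determinacy. Finally, as in the univariate case, the limits for $\theta_{n,\ell}\to\rho_\ell\in(0,\infty)$ and $\theta_{n,\ell}\to 0$ can be unified by allowing $\rho_\ell=0$ with the convention $\MPo(0\cdot L_\ell)\law 0$, so that case (iii) is a degenerate instance of case (ii).
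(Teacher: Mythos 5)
Your proposal is correct and takes essentially the same approach as the paper: both convert between raw and factorial joint moments via Stirling numbers, observe that for coordinates with $\theta_{n,\ell}\to\infty$ the lower-order Stirling terms vanish after rescaling, and match the resulting limits with raw moments of $L_\ell$ (for diverging $\theta$'s) and factorial moments of $\MPo(\rho_\ell L_\ell)$ (for bounded $\theta$'s). The paper's proof treats the genuinely mixed regime directly by splitting the index set into $C$ (diverging) and $D$ (convergent, with $\rho_k=0$ allowed) and considering a single hybrid moment functional that is raw on $C$ and factorial on $D$, whereas you present the pure cases first and then patch them together; this is a difference in exposition, not substance.
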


\begin{proof}
We follow the proof of the univariate case~\cite[Lemma 2]{KuPa2014}. 
Assume that the set $\{1,\dots,m\}$ decomposes into two disjoint sets $C$ and $D$ such that for indices 
$k\in C$ we have $\lambda_{n,k}\to\infty$, whereas for an index $k\in D$ it holds $\lambda_{n,k}\to\rho_k$ for $n\to\infty$.
We observe that the mixture of joint raw moments and joint factorial moments converge:
\begin{equation*}
\E\left(\Big(\prod_{k\in C}\frac{X_{n,k}^{s_{k}}}{\lambda_{n,k}^{s_k}}\Big)\cdot\Big(\prod_{k\in D}\fallfak{X_{n,k}}{s_{k}}\bigg)\right)
\to \mu_{s_1,\dots,s_m}\cdot\prod_{k\in D}\rho_{k}^{s_k}.
\end{equation*}
The latter joint moment sequence, both raw and factorial moments, is exactly the joint moment sequence of a random vector $\mathbf{Z}=(Z_1,\dots,Z_m)$, with 
\begin{equation*}
\forall k\in C\colon \ Z_k\law L_k,\qquad \forall k\in D\colon \ Z_k\law \MPo(\rho_k L_k),
\end{equation*}
such that
\begin{equation*}
\E\left(\Big(\prod_{k\in C}Z_k^{s_{k}}\Big)\cdot\Big(\prod_{k\in D}\fallfak{Z_{k}}{s_{k}}\bigg)\right)
= \mu_{s_1,\dots,s_m}\cdot\prod_{k\in D}\rho_{k}^{s_k}.\qedhere
\end{equation*}
\end{proof}

\begin{proof}[Proof of Theorem~\ref{TheMVRefined}]
The proof is very similar to the proofs of Theorems~\ref{TheRefined} and~\ref{TheMV}, so we will be brief again. 
The mixed factorial moments of $\mathbf{X}_{n,j_\ell}$ are obtained by differentiation and extraction of coefficients:
\begin{equation*}
\E\big(\fallfak{X_{n,1,j_1}}{s_1}\cdots \fallfak{X_{n,m,j_m}}{s_m}\big)=\frac{[z^n]\partial_{v_1}^{s_1}\dots \partial_{v_m}^{s_m}(F)(z,1,\dots, 1)}{[z^n]F(z,1,\dots, 1)}.
\end{equation*}
The differentiation with respect to $v_{\ell}$ only affects the factor 
\begin{equation*}
G_{\ell}\big(H_\ell(z) - (1-v_\ell)h_{\ell,j_\ell} z^{j_\ell}\big),
\end{equation*}
leading to singular expansions covered in Section~\ref{sec:prelimsingular} and additional factors
$h_{\ell,j_\ell}^{s_\ell}$, ${1\le \ell \le m}$. The asymptotics of these factors for $j_\ell\to\infty$ are all governed by~\eqref{COMPSCHEMEExpanHj}. The individual singular expansions are similar to~\eqref{COMPSCHEMEExpanGAbleit}. 
Thus, extracting the coefficient $z^{n-s_1j_1-\dots-s_mj_m}$ from 
$M(z)\cdot\prod_{\ell=1}^{m}G^{(s_\ell)}_{\ell}\big(H_\ell(z)\big)$
 leads to the asymptotic expansion of the joint factorial moments. 
Lemma~\ref{COMPSCHEMElemmaMixedPoissonMV} then yields the stated limit law. 
\end{proof}
\pagebreak

We end this section with four examples covered by the multivariate scheme~\eqref{MVF}.

\begin{example}[$m$-bundled plane-oriented recursive trees]
\label{COMPSCHEMEMulVar-Examplekbund}
We have previously encountered $3$-bundled trees in Section~\ref{ExBilabelled} in the framework of bilabelled trees. 
In the following we study ordinary increasing trees~\cite{BerFlaSal1992,Drmota2009,JKP2011,KuPa2007,MSS1993,PanPro2007}, where each node has only one label. As before, given a degree-weight sequence $(\varphi_{j})_{j \ge 0}$, the corresponding degree-weight generating function is defined by 
\begin{equation*}\varphi(t) = \sum_{j \ge 0} \varphi_{j} t^{j}.\end{equation*}
The associated family $\mathcal{T}$ of increasing trees can be described by the following symbolic equation using the boxed product (see Section~\ref{ExBilabelled}):
\begin{equation*}
 \mathcal{T} = \mathcal{Z}^{\Box} \ast \varphi\big({\mathcal{T}}\big).
\end{equation*}
For the exponential generating function 
\begin{equation*} T(z) = \sum_{n \ge 1} T_{n} \frac{z^{n}}{n!},\end{equation*} 
we thus have the following equation
\begin{equation*}
T'(z)=\varphi(T(z)),\quad T(0)=0. 
\end{equation*}
We are interested in families of generalized plane-oriented recursive trees with degree-weight generating function
$\varphi(t)=1/(1-t)^m$, with $m\in\N$. 

For $m=1$ we get the ordinary plane-oriented recursive tree, whereas for $m>1$ we get the so-called $m$-bundled trees, with generating function
\begin{equation*}
T(z)=1-\big(1-(m+1)z\big)^{1/(m+1)}.
\end{equation*}
One may think of each node holding $m-1$ additional separation bars~\cite{JKP2011}, which can be regarded as a special edge type. 
Naturally, this refines the root degree $X_n$ in a random tree of size $n$, since we may look at the number of nodes attached to the root in a specific cluster, induced by the $m-1$ bars:
\begin{equation*}
X_n=\sum_{\ell=1}^{m}X_{n,\ell}.
\end{equation*}

By construction, the random variables $X_{n,\ell}$ are exchangeable, but not independent. 
Writing $\mathbf{u}^\mathbf{X_n} = {u_1}^{X_{n,1}} \cdots {u_m}^{X_{n,m}}$,
the generating function 
\begin{equation*}
T(z,\mathbf{u})=\sum_{n\ge 1} T_n \E(\mathbf{u}^{\mathbf{X_n}} ) \frac{z^n}{n!}
\end{equation*}
of the random vector $\mathbf{X}_n=(X_{n,1},\dots,X_{n,m})$ 
is given by
\begin{equation*}
\frac{\partial}{\partial z}T(z,\mathbf{u})= \prod_{\ell=1}^{m}\frac{1}{1-u_{\ell}T(z)}.
\end{equation*}
This refinement is covered by the multivariate scheme~\eqref{MVF} 
(with a shift due to the derivative:
Theorem~\ref{TheMV} thus gives here the asymptotic behaviour of $X_{n+1,\ell}$, which, of course,
then gives the asymptotic behaviour of $X_{n,\ell}$). Similarly, one may study
the outdegree of a node labelled~$j$, as well as multiple nodes, leading
to closely related generating functions~\cite{James2015,KuPa2007,Mor2005}.
\end{example}
\pagebreak

\begin{example}[Bilabelled increasing trees and refined root degree]
Continuing from Section~\ref{ExBilabelled}, we can refine the root-degree $X_n$ in $3$-bundled bilabelled increasing trees of size~$2n$, such that
$X_{n}=X_{n,1}+X_{n,2}+X_{n,3}$,
where $X_{n,\ell}$ is the root-degree in the $\ell$th bundle.
Note that the random variables $X_{n,\ell}$ are exchangeable. 
The corresponding generating function
{%
\begin{equation*}
T(z,u_1,u_2,u_3)=\sum_{n\ge 1}T_n\E(u_1^{X_{n,1}}u_2^{X_{n,2}}u_3^{X_{n,3}})\frac{z^{n}}{n!}
\end{equation*}}%
then satisfies
\begin{equation*}
\frac{\partial^2}{\partial z^2}T(z,u_1,u_2,u_3)=\prod_{\ell=1}^{3}\frac{1}{1-u_{\ell}(1-\sqrt{1-z^2})}.
\end{equation*}
Except for the non-standard shift in the random variable, the problem corresponds directly to a multivariate pure critical composition scheme.
\end{example}

\begin{example}[Returns in coloured bridges and walks]
Consider $k$-coloured bridges $B_k$ as defined in Section~\ref{sec:mcolouredbridges}:
A bridge is coloured in exactly $k$ colours, where each colour consists of a non-empty bridge. 
Combinatorially, we append $k$ non-empty bridges one after the other.
We are interested in the individual number of returns to zero in each of the first $k_1$ bridges, followed by $k_2$ additional bridges, such that $k_1+k_2=k$.
Reusing the combinatorial construction~\eqref{eq:mcoloredbridgeBGF}, the multivariate generating function $B_k(z,\mathbf{u})$ satisfies 
\begin{align*}
	B_k(z,u_1,\dots,u_{k_1}) &= \left(\prod_{j=1}^{k_1}\left(\frac{1}{1-u_j\left(1-\frac{1}{B(z)}\right)}-1\right)\right) (B(z)-1)^{k_2}. 
\end{align*}
By our previous reasoning we see that the corresponding random variables are exchangeable and the multivariate scheme directly applies.
Moreover, we can also consider walks and a refined generating function $W_k(z,\mathbf{u})$ of $k$-coloured walks with the tail coloured in the same colour as the final bridge, 
keeping track of the individual returns to zero of the first $k_1$ bridges,
\begin{align*}
	W_k(z,\mathbf{u}) &= (1 + B_{k}(z,\mathbf{u})) \frac{W(z)}{B(z)}.
\end{align*}
Again, the corresponding random variables are exchangeable and the multivariate scheme directly applies again.
\end{example}

\begin{example}[Triangular urn models and node degrees in generalized recursive trees]
We discuss a specific balanced triangular urn model with $m$ colours, which generalizes the case with $2$ colours from Section~\ref{SubSecTriangular}.
Our urn model is specified by the following $m \times m$ balanced ball replacement matrix $M$ with $\myalpha_k,\myybeta_k \in \N$ and 
$\myalpha_k+\myybeta_k=\sigma$, for  $1\le k\le m$: 
\begin{equation}
M=
\left(
\begin{matrix}
\myalpha_1 &0 & \dots &0 &\myybeta_1\\
0 &\myalpha_2 &\dots &0& \myybeta_2\\
\vdots & \ddots &\ddots & &\vdots\\
0 & \dots & 0 &\myalpha_{m-1} & \myybeta_{m-1}\\
0& 0&\dots &0 &\myalpha_{m} +\myybeta_{m}
\end{matrix}
\right).
\label{COMPSCHEME-MV-UrnM}
\end{equation}
We consider the random variables $A_{n,k}$ which count the number of balls of colour $k$ after $n$ draws.
To analyse this urn model, it is natural to introduce the history generating function 
\begin{equation*}
F(z,x_1,\dots,x_m) = \sum_{n,k_1,\dots,k_{m} \geq 0} f_{n,k_1,\dots,k_{m}} x_1^{k_1} \dots x_{m}^{k_{m}}\frac{z^n}{n!},
\end{equation*}
where $f_{n,k_1,\dots,k_{m}}$ is equal to the number of transitions leading after $n$ steps to a configuration with $k_1,\dots,k_{m}$ balls of colours $1,\dots,m$, respectively. 
Thus, the joint distribution of the number of balls of each colour 
is given by 
\begin{equation*}\P\{A_{n,1}=k_1,\dots, A_{n,m}=k_m\}=\frac{[z^n\, x_1^{k_1}\dots x_m^{k_m}]F(z,x_1,\dots,x_m)}{[z^n]F(z,1,\dots,1)}.\end{equation*}
where $F$ has, surprisingly, the following simple algebraic closed form.
\begin{prop}
\label{PropMUrn}
The history generating function $F(z,x_1,\dots,x_{m})$, associated with the balanced triangular urn model with ball replacement matrix $M$ in~\eqref{COMPSCHEME-MV-UrnM}, 
is given by 
\begin{equation}\label{Hz}
F(z,x_1,\dots,x_{m})= \prod_{k=1}^{m} X_k(z)^{A_{0,k}} 
\end{equation}
where  
\begin{equation}\label{Xm}
X_{m}(z)=x_{m} \cdot \big(1-\sigma x_{m}^{\sigma}z\big)^{-1/\sigma}
\end{equation}
and, for  $1\leq k < m$,
\begin{equation}\label{Xk}
X_{k}(z)=x_{k} \cdot \Big(1-x_{k}^{\myalpha_k}x_{m}^{-\myalpha_k}\big(1-(1-\sigma x_{m}^{\sigma}z)^{\myalpha_k/\sigma}\big)\Big)^{-1/\myalpha_k}.
\end{equation}
\end{prop}
\begin{proof}
We follow the history-counting approach introduced by Flajolet et al.~in~\cite{FlajoletGabarroPekari05,FlaDumPuy2006},
which shows that 
the evolution of the urn is captured by the matricial equation $\partial_z {\mathbf X} = {\mathbf X}^{1+M}$,
that is, one has the differential system
\begin{align*}
\dot{X}_k(z) &= X_k(z)^{1+\myalpha_k} X_{m}^{\myybeta_k}(z) \text{\, (for $1\le k< m$)} \qquad \text{and} \\
\dot{X}_{m}(z) &= X_{m}^{1+\sigma}(z),
\end{align*}
with initial conditions $X_{k}(0)=x_{k}$. 
Now, by separation of variables, we obtain Formula~\eqref{Xm}.\footnote{There is a small sign typo in~\cite[page~94]{FlaDumPuy2006} where, in the corresponding result for the $2\times 2$ model, $y_0^{-\sigma}$ should be replaced by $y_0^{\sigma}$.}
Then, by  integration, we get Formula~\eqref{Xk}.
 Finally, using the basic isomorphism between differential systems and history generating functions~\cite[Theorem 6]{FlaDumPuy2006}, 
we obtain the desired closed form~\eqref{Hz} for the generating function of urn histories.
\end{proof}
As the urn is balanced, we add the same number of balls at each step. Therefore, knowing  the number of balls of colour $1$ to $m-1$
immediately gives the number of balls of colour~$m$, so, without loss of information, we can stop tracking this parameter (i.e., we set $x_m=1$).
One then recognizes that this urn model has a multivariate pure critical composition scheme 
(with $M(z)=(1-\sigma z )^{-1/\sigma}$, and  for $k<m$, $G_k(z)= (1- z)^{-1/\myalpha_k}$,
 $H_k(z)=1-(1-\sigma z)^{\myalpha_k/\sigma}$, 
and $u_k=x_{k}^{\myalpha_{k}}$);
thus Theorem~\ref{TheMV} applies: The corresponding joint distribution (i.e., the \emph{number of drawings} of balls of each colour)
converges to a 
Dirichlet-stable product distribution. \smallskip

Note that the node degrees in generalized plane-oriented recursive trees can be modelled by such urns~\cite{Jan2005,Mor2005}. 
So, as an application, let us set $\myalpha_k=1$ and $\myybeta_{k}=m-1$ for $1 \leq k \leq m$ and $A_{0,k}=1$ (for $1\le k < m$) and $A_{0,m}=0$ as initial values.
Then, the random variables $A_{n,1}$ up to $A_{n,{m-1}}$ are exchangeable and count the refined root degree of $(m-1)$-bundled plane-oriented recursive trees, i.e., the number of children of the root in each bundle.
\smallskip

Moreover, the urn model $M$ can be used to study the joint degree distribution of
the nodes labelled $1,2,\dots,m-1$ in generalized plane-oriented recursive trees, as well as to obtain second order asymptotics and central limit theorem analogues.
\end{example}
\newpage

\section{Conclusion} 
In this article, we introduced the three-parameter Mittag-Leffler distribution and showed its universality 
for capturing transition phases related to critical compositions. 
It opens a new chapter in the long history of the Mittag-Leffler function.
This special function, $E_\alpha(t):=\sum_{k\geq 0} \frac{1}{\Gamma(\alpha k+1)} t^k$,
was  introduced by Mittag-Leffler in 1903~\cite{MittagLeffler1903a,MittagLeffler1903b}.
It was generalized  to a two-parameter function
$E_{\alpha,\beta}(t):=\sum_{k\geq 0} \frac{1}{\Gamma(\alpha k+\beta)} t^k$ by Wiman in 1905~\cite{Wiman1905a} and to a three-parameter function 
$E_{\alpha,\beta}^{\gamma}(t):=\sum_{k=0}^\infty \frac{\Gamma(k+\gamma)}{\Gamma(\alpha k+\beta) \Gamma(\gamma)} \frac{t^k}{k!}$
by Prabhakar in 1971~\cite{Prabhakar1971}.
While these functions were used for decades by mathematicians and physicists working on differential equations (see Remark~\ref{electromagnetismremark} on a link with electromagnetism), 
special functions,  or
 fractional calculus, it is a nice surprise that this three-parameter Mittag-Leffler function is 
the key 
to unify different fundamental distributions in probability theory, as summarized in Table~\ref{tab:history}.

This work makes explicit the limit laws of structures associated with schemes like $G(H(z))M(z)$, 
where we additionally mark either the number of ${\mathcal H}$-components, or the number of ${\mathcal H}$-components of a given size. 
We focused on the technically more delicate and mathematically richer case where $G$, $H$, and $M$ are simultaneously singular.
We proved that when these functions have algebraic dominant singularities with exponents between 0 and 1,
the limit laws of the schemes are moment-tilted stable distributions and distributions involving Mittag-Leffler laws. 
In Table~\ref{TableO1} we give an overview of the covered schemes and their limit laws, 
where we have convergence in distribution \emph{and} convergence of all moments.

In the extended scheme, in which one follows the total number of ${\mathcal H}$-components, 
we proved the appearance of three régimes for the corresponding limit law (continuous, Boltzmann, or a linear combination of these two),
depending on the relation of the singular exponents of $G$, $H$, and~$M$; see Table~\ref{tab:extended} on page~\pageref{sec2}.
In the size-refined scheme, in which one follows the number of ${\mathcal H}$-components of a given size, 
we proved the appearance of mixed Poisson distributions (see Definition~\ref{COMPSCHEMEdef1}) and a double phase transition for the limit law from continuous to discrete to degenerate,
with explicit threshold sizes depending on the exponents; see Table~\ref{tab:size-refinedphases} on page~\pageref{tab:size-refinedphases}.  

We also presented several extensions (logarithmic singularities, multivariate cases) and a variety of applications to important probabilistic and combinatorial objects.
This allowed us to obtain new results for
the core size of supertrees,
the number of subtrees in different increasing trees,
the returns to zero and sign changes in walks and bridges, 
the table sizes in the Chinese restaurant process, 
and the number of balls in some urn models. 
\begin{table}[hb]
\setlength{\tabcolsep}{4.5pt}
\begin{tabular}{ccc}
\toprule
Distribution & \begin{tabular}{c} Moment generating function \end{tabular} & History \\
\midrule
\begin{tabular}{c}
Mittag-Leffler $\ML(\alpha)$\\[1.5mm]
(see Definition~\ref{ex:mittagleffler})
\end{tabular} & 
\begin{tabular}{c}
$\displaystyle{\E\big(e^{tX}\big)=E_\alpha(t)}$\\
\end{tabular} &
\begin{minipage}{5.2cm}
MGF appeared as Laplace transform of stable distributions and for the local time of Markov processes~\cite{Feller1949,DarlingKac1957}.
\end{minipage}
\\
\midrule
\begin{tabular}{c}
two-parameter\\
 Mittag-Leffler $\GML(\alpha,\beta)$\\[1.5mm]
(Definition~\ref{ex:genmittagleffler})
\end{tabular}
& \begin{tabular}{c}
$\displaystyle{\E\big(e^{tX}\big)=\Gamma(\beta') E_{\alpha,\beta'}^{\gamma'}(t)}$\\[1.5mm]
$(\beta', \gamma') =(\beta,\frac{\beta}{\alpha})$
\end{tabular}
& 
\begin{minipage}{5.2cm} 
MGF introduced by Pitman in 2002~\cite{Pitman2006} for the Chinese restaurant process,
and  used for a line-breaking construction of stable trees  by Goldschmidt and Haas~\cite{GoldschmidtHaas2015}; it 
also occurs for triangular P\'olya urns (see Flajolet, Dumas, Puyhaubert~\cite{FlaDumPuy2006} and Janson~\cite{Jan2006,Jan2010}).
\end{minipage}
\\
\midrule 
\begin{tabular}{c}
three-parameter \\
Mittag-Leffler 
$\BML(\alpha,\beta,\gamma)$\\[1.5mm]
(Definition~\ref{def:BetaMittagLeffler})
\end{tabular} 
& \begin{tabular}{c}
$\displaystyle{\E\big(e^{tX}\big)= \Gamma(\beta') E_{\alpha,\beta'}^{\gamma'}(t)}$\\[1.5mm]
$(\beta',\gamma')=(\beta+\gamma,\frac{\beta}{\alpha})$
\end{tabular} & 
\begin{minipage}{5.2cm} 
MGF introduced in 2021 (in this article) for critical composition schemes.
\end{minipage}\\
\bottomrule
\end{tabular}
\caption{The successive generalizations of the Mittag-Leffler distribution, its moment generating function (MGF),
its connection to the corresponding generalized Mittag-Leffler function, and some occurrences in probability theory.}\label{tab:history}
\end{table}
\clearpage 

\begingroup
\renewcommand\arraystretch{1.81}
\begin{table}[b]
\setcellgapes{.55mm} \makegapedcells 
\begin{tabular}{@{}cccc@{}}
\toprule
\makecell{\textbf{Composition}\\\textbf{scheme}} & \textbf{Symbolic form} &\textbf{Limit law} & \textbf{Thm.}\\
\midrule
\addlinespace[-0.5mm]
Ordinary & $F(z,u)=G\big(u H(z)\big)$ & generalized Mittag-Leffler & \makecell{\ref{CoExtended1}}\\
Extended & $F(z,u)=M(z) G\big(u H(z)\big)$ & 
\makecell{
three-parameter Mittag-Leffler\\
and Boltzmann distribution}
& \makecell{\ref{TheExtended}\\ \ref{TheExtendedDegenerate} \\ \ref{TheExtendedConfluent}}
 \\
Cyclic & $F(z,u)=-\log\Big(1-u H(z)\Big)$ & Mittag-Leffler & \ref{COMPSCHEMECycleThe1}\\ 
\makecell{Multivariate \\ extended } & $F(z,\mathbf{u})=M(z)\prod\limits_{\ell=1}^{m}G_{\ell}\big(u_{\ell}H_{\ell}(z)\big)$ & \makecell{multivariate \\ product distribution} &\ref{TheMV}\\
Size-refined & $F(z,v)=M(z) G\big(H(z)-z^j h_j(1-v)\big)$ &\makecell{mixed Poisson type\\ phase transition} & \ref{TheRefined}\\ 
\makecell{Size-refined\\cyclic} & $F(z,v)=-\log\Big(1 - \big(H(z) - (1-v) h_j z^j / j!\big)\Big)$ &\makecell{mixed Poisson type\\ phase transition} &\ref{COMPSCHEMECycleThe2}\\ 
\makecell{Multivariate \\ size-refined} & $F(z,\mathbf{v})\!=\!M(z)\!\prod\limits_{\ell=1}^{m}G_{\ell}\big(H_{\ell}(z)-z^{j_\ell} h_{\ell,j_\ell}(1\!-\!v_\ell)\big)$ & \makecell{mv.\ mixed Poisson type\\ phase transition} & \ref{TheMVRefined}\\
\bottomrule 
\end{tabular}
\caption{Overview of our results on critical composition schemes, where $\mathbf{u} = (u_1,\dots, u_m)$ and $\mathbf{v} = (v_1,\dots, v_m)$.}
\label{TableO1}
\end{table}
\endgroup

The results of Table~\ref{TableO1} hold for functions $H(z)$ having a dominant singularity of algebraic type. 
Our methods can also deal with schemes having other types of singularities. 
For example, one could allow algebraic-logarithmic behaviours of the form
\begin{equation*}
H(z)\sim \left(1-\frac{z}{\rho_H}\right)^{\lambda_H} \bigg(\frac1z \log\left(1-\frac{z}{\rho_H}\right)\bigg)^{\psi_H}.
\end{equation*}

Such algebraic-logarithmic schemes cover some instances of 3-colour balanced triangular urn models~\cite{Puy2005,FlaDumPuy2006},
whose complete analysis, however, remains a challenge as other types of singularities appear, thus leading to new families of limit laws whose nature is unclear. 
Compare also with the related open problem by Janson~\cite{Jan2010}, asking for a more detailed description of the limit law of unbalanced 2-colour triangular urn models.

To keep this article readable, we eluded the question of the speed of convergence of~$X_n$, properly normalized, to its limit law $X$. 
Recently, a few articles addressed this 
for some preferential attachment rules~\cite{PekoezRoellinRoss2016} 
and for the 
Chinese restaurant process~\cite{DoleraFavaro2020} (see Remark~\ref{remark:almostsure}).
In a future work we plan to give uniform bounds on the moments, leading to Berry--Esseen-like inequalities for the speed of convergence of all the limit laws associated with critical composition schemes; see also~\cite{Hwang2003,AdellCal1993}.

Building on the current work, 
we consider in~\cite{BanderierKubaWagnerWallner2024}
an extension of our results on critical schemes by imposing a Gibbs measure on the parameter of the combinatorial structure:
that is, instead of the uniform distribution model imposed by Formula~\eqref{PXnk}, we consider the model 
$\P\{X_n=k\}= \frac{f_{n,k}q^k}{f_n(q)}.$
This leads to universal phase transitions where the three-parameter Mittag-Leffler distribution strikes again. 
(For $q=1$, this gives back the uniform distribution model.)

Last but not least, in our companion article~\cite{BanderierKubaWallner2021b}, we further extend our analysis to schemes with algebraic singularities for $\lambda_H>1$: 
We analyse a generalization of the composition scheme from~\cite{BFSS2001}, leading to stable laws (e.g., the map-Airy distribution), Gaussian laws, as well as bimodal distributions. 
For extended composition schemes we observe a new behaviour, obtaining for example beta limit laws. 
In size-refined schemes we anticipate further continuous-to-discrete phase transitions.
This will achieve the exhaustive exploration of the landscape of critical composition schemes with algebraic singularities  and their phase transitions.

\begin{acks}[Acknowledgements] We thank the two referees for their careful reading and their feedback.
We also thank Katarzyna G\'orska for pointing out that the moment generating function 
of our three-parameter Mittag-Leffler distribution is called the Prabhakar function in physics.\footnote{A recording 
of this surprising feedback is even available in our \href{https://www.youtube.com/watch?v=7kqrIys-NxM}{IH\'ES talk},
at the conference \textit{Combinatorics and Arithmetic for Physics: Special Days 2022}.
We thank Gérard Duchamp, Maxim Kontsevich, Gleb Koshevoy, Sergei Nechaev, and Karol Penson,
who had the great idea to organize this interdisciplinary event!}

Last but not least, we are pleased to dedicate this article to our colleague and mentor Alois Panholzer, 
on the occasion of his 50th birthday. 
In fact, Alois played a central r\^ole in the birth of this article in several ways:
\begin{itemize} 
	\item As a byproduct of Alois' analyses of combinatorial structures and stochastic processes (such as permutations, parking functions, lattice paths, urns, and trees),
	several phase transitions involving the mixed Poisson distribution were uncovered~\cite{PanholzerSeitz2011,KuPa2014}. This guided the second author to study a general framework for these phase transitions, and to join forces with the other authors to extend the results of~\cite{BFSS2001,Wallner2020} involving
	Rayleigh, half-normal, and stable distributions. 
	This led us to the composition schemes analysed in this work.
	\item Alois also connected the first and second
	authors via a French-Austrian international project (PHC Amadeus) in 2005/06, 
	perpetuating a long tradition of exchanges in combinatorics between Austria and France. 
	A part of this project led to the article~\cite{BanderierKubaPanholzer2009}, 
	which established Gaussian limit laws for some composition schemes related to trees. 
	\item Alois was also a constant source of inspiration and guidance for the second author. 
	Without him, we would have never started this work.
\end{itemize}
\end{acks}

{\bf Funding.} \ \ \ 
Michael Wallner was funded by the FWF Erwin Schr\"odinger Fellowship J~4162 and the FWF Stand-alone Project P~34142.
The authors also thank the  French-Austrian PHC Amadeus project ``Asymptotic behaviour of combinatorial structures'' (OeAD WTZ project FR 01/2023),
which funded mutual visits.

\bgroup
\let\oldbibliography\thebibliography
\renewcommand{\thebibliography}[1]{\oldbibliography{#1}\setlength{\itemsep}{1pt}}
 \interlinepenalty=10000
{\linespread{1}
\setlength{\itemsep}{5mm}
\bibliographystyle{cyrbiburl}
\bibliography{bibliography_short} 
}
\egroup
\end{document}